\definecolor{gray75}{gray}{0.75}
\newcommand{\hsp}{\hspace{20pt}}
\titleformat{\chapter}[hang]{\Huge\bfseries}{\thechapter\hsp\textcolor{gray75}{|}\hsp}{0pt}{\Huge\bfseries}
\newtcbox{\mybox}[1]{colback=red!5!white,colframe=red!75!black,fonttitle=\bfseries,title=#1}
\newtcolorbox{ctheobox}{colback=green!10!white,colframe=green!10!white, sharp corners,breakable}
\newtcolorbox{clembox}{colback=ForestGreen!10!white,colframe=ForestGreen!10!white, sharp corners,breakable}
\newtcolorbox{cdbox}{colback=BrickRed!10!,colframe=BrickRed!10!white, sharp corners,breakable}
\newtcolorbox{ckorbox}{colback=Violet!10!,colframe=Violet!10!white, sharp corners,breakable}
\newtcolorbox{cbembox}{colback=BurntOrange!20!,colframe=BurntOrange!20!white, sharp corners,breakable}
\newtheorem{theorem}{{Theorem}}[section]
\newtheorem{lemma}[theorem]{{Lemma}}
\newtheorem{definition}[theorem]{{Definition}}
\newtheorem{korollar}[theorem]{{Corollary}}
\newtheorem{bemerkung}[theorem]{{Remark}}
\newcommand{\N}{\mathbb{N}}
\newcommand{\R}{\mathbb{R}}
\newcommand{\tr}{\operatorname{tr}}
\newcommand{\Sp}{\mathbb{S}}
\newcommand\blfootnote[1]{%
  \begingroup
  \renewcommand\thefootnote{}\footnote{#1}%
  \addtocounter{footnote}{-1}%
  \endgroup
}
\renewcommand{\theequation}{\arabic{section}.\arabic{equation}}
\begin{document}

\author{Jan-Henrik Metsch
\thanks{Corresponding author: J.-H. Metsch, Department of Mathematics, University of Freiburg, Germany ({\tt\small jan.metsch@math.uni-freiburg.de})}
}
\title{On the Conditional Existence of Foliations by CMC and\\
Willmore Type Half-Spheres}
\maketitle

\begin{abstract}
We study half-spheres with small radii $\lambda$ sitting on the boundary of a smooth bounded domain while meeting it orthogonally. Even though it is known that there exist families of CMC and Willmore type half-spheres near a nondegenerate critical point $p$ of the domains boundaries mean curvature, it is unknown in both cases whether these provide a foliation of any deleted neighborhood of $p$. We prove that this is not guaranteed and establish a criterion in terms of the boundaries geometry that ensures or prevents the respective surfaces from providing such a foliation. This perhaps surprising phenomenon of conditional foliations is absent in the closely related Riemannian setting, where a foliation is guaranteed. We show how this unconditional foliation arises from symmetry considerations and how these fail to apply to the `domain-setting'.
\end{abstract}

\blfootnote{2020 Mathematics Subject Classification: Primary: 53C42, 58J32, Secondary: 47J07, 53C12}
\blfootnote{Keywords: Foliation, Willmore Functional, CMC Surface, Geometric PDE, Nonlinear Boundary Value Problem, Implicit Function Theorem}

\vspace{-1cm}

\setcounter{equation}{0}
\section{Introduction}
The \emph{Area} and the \emph{Willmore energy} of a regular surface $f:\Sigma\rightarrow\R^3$ are defined respectively as 
$$A[f]:=\int_{\Sigma}d\mu_f\hspace{.5cm}\textrm{and}\hspace{.5cm}
\mathcal W[f]:=\frac14\int_{\Sigma}H[f]^2 d\mu_f.$$
Let $\Omega\subset\R^3$ be a bounded smooth domain and put $S:=\partial\Omega$. We consider the class
\begin{align*}
    \mathcal M(S)&:=\left\{\phi\in C^{4,\alpha}(\Sp^2_+,\R^3)\textrm{ immersed}\ \bigg|\ \phi(\partial \Sp^2_+)\subset S,\ \frac{\partial\phi}{\partial\eta}=N^S\circ\phi\right\}
\end{align*}
of immersed surfaces meeting $S$ orthogonally along the boundary. Here $\eta$ and $N^S$ denote the interior unit normals of $\Sp^2_+$ and $\Omega$ along their respective boundaries. We denote by $\mathcal M_\lambda(S)$ the subclass of surfaces with area $2\pi\lambda^2$. In their article \cite{AK}, Alessandroni and Kuwert study critical points of the Willmore energy inside the class $\mathcal M_\lambda$. To be concrete $\phi\in\mathcal M_\lambda(S)$ is critical, when 
\begin{equation}\label{problem}
\begin{aligned}
    &W[\phi]=\alpha H[\phi]\hspace{.5cm}&\textrm{in $\Sp^2_+$},\\
    &\frac{\partial H}{\partial \eta}+h^ S(\nu,\nu)H=0\hspace{.5cm}&\textrm{on $\partial\Sp^2_+$}.
\end{aligned}
\end{equation}
Here $W$ is the scalar Willmore operator, $H$ is the scalar mean curvature, $\nu$ is the normal of $\phi$ and $\alpha$ is a Lagrange multiplier. Surfaces satisfying Equation \eqref{problem} are referred to as surfaces of \emph{Willmore type}. Alessandroni and Kuwert construct critical surfaces $\phi\in\mathcal M_\lambda(S)$ for small enough $\lambda\leq\lambda_0(\Omega)$ and analyze the behavior of these in the limit $\lambda\rightarrow 0^ +$. They prove:
\begin{enumerate}[(1)]
    \item For $\lambda\rightarrow 0^ +$ critical surfaces can only concentrate at critical points of the domain's boundary's mean curvature $H^ S$.
    \item Given a nondegenerate critical point $p\in S$, of $H^ S$ there exist $\lambda_0>0$, a neighborhood $U$ of $p$, a smooth curve $\gamma:[0,\lambda_0)\rightarrow S$ satisfying $\gamma(0)=p$ and critical $\phi^ \lambda$ with \emph{Riemannian barycenter} -- a kind of center point on $S$ (see Appendix 1 in \cite{AK}) -- $C[\phi^ \lambda]$ such that 
    $$\begin{aligned}
        &\phi\in\mathcal M_\lambda(S)\textrm{ critical}\\
        &\hspace{.8cm}C[\phi]\in U
    \end{aligned}
    \hspace{.5cm}\Longleftrightarrow\hspace{.5cm}
    \phi=\phi_\lambda\textrm{ and }C[\phi]=\gamma(\lambda).
    $$
\end{enumerate}

A closely related problem is the study of CMC half-spheres. We put 
\begin{align*}
    \mathcal N(S)&:=\left\{\phi\in C^{2,\alpha}(\Sp^2_+,\R^3)\textrm{ immersed}\ \bigg|\ \phi(\partial \Sp^2_+)\subset S,\ \frac{\partial\phi}{\partial\eta}=N^S\circ\phi\right\}
\end{align*}
A surface $\phi\in \mathcal N(S)$ together with the boundary $S$ encloses a region, whose volume we denote by $V[\phi]$. We denote the subclass of all $\phi\in\mathcal N(S)$ that satisfy $V[\phi]=\frac23\pi\lambda^3$ by $\mathcal N_\lambda(S)$. In their article \cite{bellettinifusco} Bellettini and Fusco study critical points of the area $A$ inside the class $\mathcal N_\lambda(S)$ -- that is solutions to 
\begin{equation}\label{CMCEquationIntro}
H[\phi]=\alpha
\end{equation}
where $\alpha$ is a Lagrange multiplier. Surfaces satisfying Equation \eqref{CMCEquationIntro} are referred to as CMC-surfaces. Just as in the Willmore case, Bellettini and Fusco construct critical surfaces $\phi\in \mathcal N_\lambda(S)$ for small values $\lambda\leq\lambda_0(\Omega)$ and analyze their behavior in the limit $\lambda\rightarrow0^+$. Paraphrasing their result, they establish the analog statement of (2): Given  a nondegenerate critical point $p\in S$ of $H^ S$, there exist $\lambda_0>0$, a neighborhood $U$ of $p$, a smooth curve $\gamma:[0,\lambda_0)\rightarrow S$ satisfying $\gamma(0)=p$ and critical $\phi^ \lambda$ with \emph{barycenter} $C[\phi^ \lambda]=\gamma(\lambda)$ such that 
\begin{equation}\label{CMCSecondStatment}
\begin{aligned}
        &\phi\in\mathcal N_\lambda(S)\textrm{ critical}\\
        &\hspace{.8cm}C[\phi]\in U
    \end{aligned}
    \hspace{.5cm}\Longleftrightarrow\hspace{.5cm}
    \phi=\phi_\lambda\textrm{ and }C[\phi]=\gamma(\lambda).
\end{equation}

Both of these results, as well as the analysis to derive them, are similar to those obtained in the study of embedded CMC and Willmore type spheres in a Riemannian manifold $M$. For both the Willmore and the CMC case, the analog of Statement (1) holds when the domain's mean curvature $H^S$ is replaced by the ambient manifold's scalar curvature $\operatorname{Sc}$. In the Willmore case, this was first established by Laurain and Mondino in \cite{laurain2015concentration} and was verified by Lamm, Metzger and Schulze in \cite{lamm2009small}. In the CMC, the result is due to  Laurain \cite{LaurainConcentrationCMC}. The construction of area-constrained Willmore surfaces close to geodesic spheres originates in Mattuschka \cite{Mattuschka}. Here the analog of Statement (2), again with the replacement $H^S\rightarrow \operatorname{Sc}$, is derived. The analog of Statement \eqref{CMCSecondStatment} in the Riemannian setting, yet again with $\operatorname{Sc}$ instead $H^S$, is due to Ye \cite{ye}.\\

In both the CMC and the Willmore case, it is also known that the near a nondegenerate point $p$ of $\operatorname{Sc}$ the critical surfaces `close' to $p$ provide a foliation of a neighborhood $U$ of $p$. This sort of result was pioneered by Ye in \cite{ye}, where he considered the CMC case. Applying his methods to the hee Willmore case, local foliations of Willmore type surfaces have first been constructed simultaneously by Ikoma, Malchiodi, Mondino and Lamm Metzger, Schulze in \cite{malchiodispheres} and \cite{lammfoliation} respectively. The present article addresses whether this is also true in the `domain setting' introduced above.  Perhaps surprisingly, we prove that there is a qualitative difference between the two situations:

\begin{theorem}[Willmore Case]\label{theoremWillmore}
Let $\Omega\subset\R^3$ be a smooth bounded domain and $S:=\partial\Omega$. Denote the Gauß curvature of $S$ by $K^S$ and the scalar mean curvature by $H^S:=\vec H^S\cdot N^S$. Let $p\in S$ be a nondegenerate critical point of $H^S$ and put $v_0:=\frac1{2}\left|\left(\nabla^2 H^S(p)\right)^{-1}\nabla K^ S(p)\right|$.
\begin{enumerate}
    \item If $v_0<1$, there exist $\lambda_0>0$ and a neighborhood $U$ of $p$ in $\bar\Omega$ such that for $\lambda\in(0,\lambda_0)$ the Willmore type surfaces $\phi^{\gamma(\lambda),\lambda}\in\mathcal M_\lambda(S)$ provide a foliation $U\backslash\set p$.
    \item If $v_0>1$, then for no neighborhood $U$ of $p$ in $\bar\Omega$ it is true that $U\backslash\set p$ is foliated by surfaces of Willmore type $\phi_\lambda\in \mathcal M_\lambda(S)$.
\end{enumerate}
\end{theorem}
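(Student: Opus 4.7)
The plan is to reduce both parts of the theorem to a model picture via a blow-up at scale $\lambda$ around $p$. In coordinates $x\mapsto x/\lambda$ adapted to $(p, N^S(p))$, the rescaled surface $\phi^{\gamma(\lambda),\lambda}/\lambda$ should converge to a standard unit hemisphere $\Sigma_0\subset\overline{\R^3_+}$ whose equator lies on $\partial\R^3_+$ and whose center is some point $v_\ast\in\partial\R^3_+$, while $\bar\Omega$ flattens to $\overline{\R^3_+}$. The central input, to be extracted from the expansion of the Willmore-type Euler--Lagrange equation along the approximate family $\phi^{q,\lambda}$ underlying \cite{AK}, is
\[
\gamma'(0)\;=\;\tfrac{1}{2}\bigl(\nabla^2 H^S(p)\bigr)^{-1}\nabla K^S(p)\;=:\;v_\ast,
\]
so that $|v_\ast|=v_0$. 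The theorem then reflects the elementary fact that the unit sphere centered at $v_\ast$ is star-shaped with respect to the origin if and only if $|v_\ast|<1$.

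For part (1) I would treat $F\colon(0,\lambda_0)\times\Sp^2_+\to\bar\Omega$, $F(\lambda,y):=\phi^{\gamma(\lambda),\lambda}(y)$, as a perturbation of its rescaled limit, the cone map $(r,y)\mapsto r\,\Phi_0(y)$, where $\Phi_0\colon\Sp^2_+\to\Sigma_0$ parameterizes the limit hemisphere. When $|v_\ast|<1$ the origin lies strictly inside $B_1(v_\ast)$, so every ray from $0$ into $\overline{\R^3_+}$ meets $\Sigma_0$ in exactly one point and transversally. Infinitesimally, this is the nonvanishing of $\langle\Phi_0(y),\,\Phi_0(y)-v_\ast\rangle$, which would vanish precisely on rays from $0$ tangent to $\Sigma_0$. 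Transporting these two properties to $F$ itself via the blow-up convergence yields that $F$ is a local $C^{4,\alpha}$-diffeomorphism for small $\lambda$; global injectivity on each compact radial annulus $\{c_1\lambda\le|x-p|\le c_2\lambda\}$ follows by a uniform comparison with the cone model, and taking unions delivers the desired foliation.

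For part (2), $|v_\ast|>1$ places the origin strictly outside $\overline{B_1(v_\ast)}$, so there is a nonempty open set $V\subset\overline{\R^3_+}$ of directions whose ray from $0$ avoids $\Sigma_0$. Since $v_\ast\in\partial\R^3_+$ and $N^S(p)$ corresponds to the inner normal $e_3$, the direction $N^S(p)$ itself lies in $V$ (the ray $\{te_3\}$ meets the unit sphere centered at $v_\ast$ only if $t^2=1-|v_\ast|^2$), and so does an open neighborhood. Fixing such a $z_0$, I would show that for all sufficiently small $\epsilon>0$, the point $\epsilon z_0\in\bar\Omega$ lies on \emph{none} of the surfaces $\phi^{\gamma(\lambda),\lambda}$: for $\lambda$ outside the window $\lambda\asymp\epsilon$, the surface's distance-from-$p$ range (approximately $[\lambda(|v_\ast|-1),\lambda(|v_\ast|+1)]$) is incompatible with $|\epsilon z_0|\sim\epsilon$; for $\lambda=\epsilon\mu$ in the critical window, the rescaled surface stays within $o(1)$ of $\Sigma_0$ while $z_0/\mu$ is at positive distance from $\Sigma_0$ by the choice of $z_0$. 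To upgrade the failure of $\{\phi^{\gamma(\lambda),\lambda}\}$ to foliate into the non-existence of \emph{any} foliation by Willmore-type surfaces, I would invoke the local uniqueness statement~(2) of \cite{AK} recalled above: any leaf of a putative foliation of a small enough deleted neighborhood of $p$ has small area and barycenter close to $p$, hence must coincide with some $\phi^{\gamma(\lambda),\lambda}$, contradicting what we just established.

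The main technical obstacle lies in the first step, namely pinning down $\gamma'(0)$ with the exact coefficient $\tfrac12$ and obtaining $C^{4,\alpha}$ blow-up convergence of the rescaled surfaces with an $o(1)$ error as $\lambda\to0^+$ that is uniform on $\Sp^2_+$. This will require pushing the Willmore-type expansions in \cite{AK} one order beyond what is explicitly recorded there. Once this input is in place, the remainder of the argument reduces to the star-shapedness/transversality computation for the model hemisphere $\Sigma_0$ together with standard bookkeeping of the perturbative errors.
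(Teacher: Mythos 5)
Your overall architecture matches the paper's: reduce to the model hemisphere $\Sigma_0\subset\overline{\R^3_+}$ of radius one centered at $v_\ast:=\dot\gamma(0)\in T_pS$, observe that $|v_\ast|<1$ is exactly the condition for the origin to lie inside that hemisphere and for every ray from the origin to meet it transversally and exactly once, and then transport these properties back to the actual surfaces. The key numerical input — $\dot\gamma(0)=\tfrac12(\nabla^2 H^S(p))^{-1}\nabla K^S(p)$ — is identified correctly, and the reliance on the local uniqueness statement of \cite{AK} to rule out \emph{any} Willmore-type foliation (not just the canonical family) is also the paper's move.

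The genuine gap is precisely the one you flag and then defer: you do not establish the formula for $\dot\gamma(0)$. In the paper this is \emph{not} a detail pushed one order further in \cite{AK}; it is the computational heart of the whole argument. One needs the $\lambda^2$-coefficient of the reduced energy $\bar{\mathcal W}(a,\lambda)=\mathcal W[\phi^{a,\lambda}]$, specifically
\begin{equation*}
\bar{\mathcal W}(a,\lambda)=2\pi-\pi H^S(a)\lambda+\tfrac12\pi\Bigl(K^S(a)+\bigl(\ln 2-\tfrac32\bigr)H^S(a)^2\Bigr)\lambda^2+\mathcal O(\lambda^3),
\end{equation*}
and then an implicit-function-theorem argument on the rescaled gradient $v(a,\lambda)=\lambda^{-1}\nabla_1\bar{\mathcal W}(a,\lambda)$ to extract $\dot\gamma(0)$ from the $\nabla K^S$ term. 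Proving the expansion requires the explicit first-order graph function $u'(0)$, which solves a fourth-order linear boundary value problem on $\Sp^2_+$, followed by a sizable computation of the second variations $D_1^2\mathcal W$, $D_{12}^2\mathcal W$, $D_2^2\mathcal W$, $D_1\mathcal W\,u''(0)$, $D_2\mathcal W\,\tilde g''(0)$. Without this, the coefficient $\tfrac12$ (and indeed any control of the $\lambda^2$ term) is an assumption, not a conclusion. A second, softer, issue is that the foliation/disjointness argument in the model picture needs a quantitative $\lambda^2 R(\lambda,\omega)$ remainder with $R$ smooth (and uniformly Lipschitz in $(\lambda,\omega)$), not merely $C^{4,\alpha}$ convergence with $o(1)$ error, to run the perturbation estimates that rule out intersections between nearby leaves; the paper obtains this by Taylor-expanding the smooth family $(a,\lambda)\mapsto u^{a,\lambda}$ rather than by blow-up convergence.

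Your treatment of part (2) deviates from the paper in an interesting and legitimate way. The paper shows that when $v_0>1$ two leaves with nearby radii must \emph{overlap} (a quantitative Jordan-curve argument along the equator), whereas you show that the union of the leaves fails to \emph{cover} a deleted neighborhood, by exhibiting a direction $z_0$ near $N^S(p)$ whose ray misses $\Sigma_0$. Both refute the foliation; yours is arguably shorter once the blow-up convergence is available, while the paper's gives slightly more information (Lemma A.5 produces explicit intersecting pairs, which is what Figure~1 illustrates). Either route still needs the uniqueness statement to pass from ``this family is not a foliation'' to ``no foliation by Willmore-type surfaces exists,'' and you correctly invoke it.
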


Similarly, for the CMC case, we establish the following theorem:

\begin{theorem}[CMC Case]\label{theoremCMC}
Let $\Omega\subset\R^3$ be a smooth bounded domain and $S:=\partial\Omega$. Denote the Gauß curvature of $S$ by $K^S$ and the scalar mean curvature by $H^S:=\vec H^S\cdot N^S$. Let $p\in S$ be a nondegenerate critical point of $H^S$ and put $v_0:=\frac1{3}\left|\left(\nabla^2 H^S(p)\right)^{-1}\nabla K^ S(p)\right|$.
\begin{enumerate}
    \item If $v_0<1$, there exist $\lambda_0>0$ and a neighborhood $U$ of $p$ in $\bar\Omega$ such that for $\lambda\in(0,\lambda_0)$ the Willmore type surfaces $\phi^{\gamma(\lambda),\lambda}\in\mathcal N_\lambda(S)$ provide a foliation $U\backslash\set p$.
    \item If $v_0>1$, then for no neighborhood $U$ of $p$ in $\bar\Omega$ it is true that $U\backslash\set p$ is foliated by CMC surfaces $\phi_\lambda\in \mathcal N_\lambda(S)$.
\end{enumerate}
\end{theorem}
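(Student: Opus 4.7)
The plan is to test the candidate foliation by examining the Jacobian of the parametrization
$$\Psi:(0,\lambda_0)\times\Sp^2_+\longrightarrow\bar\Omega,\qquad \Psi(\lambda,x):=\phi^{\gamma(\lambda),\lambda}(x).$$
Because each $\phi^{\gamma(\lambda),\lambda}$ is an immersion with unit normal $\nu^\lambda$, $\Psi$ is a local diffeomorphism precisely when $\langle\partial_\lambda\Psi,\nu^\lambda\rangle\neq 0$ on $\Sp^2_+$. Introducing the rescaling $\psi_\lambda(x):=\lambda^{-1}\bigl(\phi^{\gamma(\lambda),\lambda}(x)-\gamma(\lambda)\bigr)$, the construction in Bellettini--Fusco yields $C^{2,\alpha}$-convergence of $\psi_\lambda$ to the standard unit half-sphere embedding $\psi_0(x)=x$, with normal $\nu_0(x)=x$. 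From $\phi^{\gamma(\lambda),\lambda}=\gamma(\lambda)+\lambda\psi_\lambda$ one then finds
$$\lim_{\lambda\to 0^+}\langle\partial_\lambda\Psi(\lambda,x),\nu^\lambda(x)\rangle=\gamma'(0)\cdot x+1,$$
provided $\gamma$ is differentiable at $0$. Because $\gamma'(0)\in T_pS$ while $\partial\Sp^2_+$ parametrizes the unit circle of $T_pS$ under $\psi_0$, the infimum of this limit over $x\in\Sp^2_+$ equals $1-|\gamma'(0)|$.

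The crucial task is therefore to identify $|\gamma'(0)|$ with $v_0$. For this I would carry the Lyapunov--Schmidt reduction of Bellettini--Fusco one order past what is needed merely to produce the curve $\gamma$. Denote by $\phi^{y,\lambda}$ the approximate critical half-spheres centered at $y\in S$ near $p$; their reduced area admits an expansion of the form
$$A[\phi^{y,\lambda}]=2\pi\lambda^2+c_1\lambda^3 H^S(y)+c_2\lambda^4 K^S(y)+\lambda^4 Q(y-p)+o(\lambda^4),$$
where $c_1,c_2$ are explicit integrals over $\Sp^2_+$ and $Q$ is a quadratic form in the displacement $y-p$. Solving $\nabla^S_y A[\phi^{y,\lambda}]=0$ at first order in $\lambda$, using the nondegeneracy of $\nabla^2 H^S(p)$, gives
$$\gamma(\lambda)=p-\tfrac{c_2}{c_1}\,\lambda\,(\nabla^2 H^S(p))^{-1}\nabla K^S(p)+o(\lambda),$$
and direct evaluation of the integrals on $\Sp^2_+$ yields $|c_2/c_1|=1/3$, reproducing the formula for $v_0$.

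With $|\gamma'(0)|=v_0$ established, the two cases are immediate. If $v_0<1$, then after shrinking $\lambda_0$ the Jacobian $\langle\partial_\lambda\Psi,\nu^\lambda\rangle$ is uniformly bounded below by a positive constant on $(0,\lambda_0]\times\Sp^2_+$. Combined with the Hausdorff-distance estimates of Bellettini--Fusco, which force the images $\phi^{\gamma(\lambda),\lambda}(\Sp^2_+)$ to exhaust a punctured neighborhood of $p$ in $\bar\Omega$ as $\lambda\to 0^+$, a standard covering/properness argument then promotes $\Psi$ to a diffeomorphism onto a punctured neighborhood of $p$, producing the sought foliation. If $v_0>1$, the limit Jacobian is strictly negative at $x^*:=-\gamma'(0)/|\gamma'(0)|\in\partial\Sp^2_+$, so for $\lambda$ sufficiently small the surface $\phi^{\gamma(\lambda),\lambda}$ is crossed transversally by nearby leaves $\phi^{\gamma(\lambda'),\lambda'}$ in a neighborhood of $\Psi(\lambda,x^*)$, producing pairs of distinct leaves through a common point and thereby precluding any foliation of a punctured neighborhood of $p$.

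The main obstacle is clearly the second paragraph: pushing the Lyapunov--Schmidt expansion of Bellettini--Fusco to the order at which $K^S(y)$ contributes and carefully isolating the coefficient $1/3$ demands a delicate bookkeeping of the next-to-leading correction of the approximate solutions. Everything else--the Jacobian identification, the crossing argument, and the passage from local to global--is a clean consequence of the implicit function theorem together with compactness.
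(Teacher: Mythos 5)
Your overall strategy is the same as the paper's: everything reduces to showing $|\dot\gamma(0)|=v_0$ via a fourth-order expansion of the reduced area, and then comparing $v_0$ with $1$. Your identification of $\dot\gamma(0)$ (ratio of the $\lambda^4 K^S$-coefficient to the $\lambda^3 H^S$-coefficient, with the $(H^S)^2$-term dropping out because $\nabla H^S(p)=0$) is exactly Lemma \ref{velocitylemmaCMC} derived from the expansion \eqref{energyexpansionCMC}, and your value $|c_2/c_1|=\frac{\pi/12}{\pi/4}=\frac13$ is correct. Where you package things differently is the geometric step: you test the foliation via the lapse $\langle\partial_\lambda\Psi,\nu^\lambda\rangle$, whose limit $1+\gamma'(0)\cdot x$ has infimum $1-v_0$ attained on $\partial\Sp^2_+$, whereas the paper reduces the leaves to the normal form $\lambda v_0e_1+\lambda\omega+\lambda^2R(\lambda,\omega)$ and proves a self-contained foliation/intersection dichotomy (Theorem \ref{foliationendresult}). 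The two are morally equivalent; your lapse criterion is cleaner for the local statement, but the global content you defer to a ``covering/properness argument'' (injectivity of $\Psi$ and exhaustion of a punctured neighborhood) and to ``transversal crossing'' is precisely what the paper's Lemmas \ref{follem1} and \ref{follem2} supply with explicit estimates. Two small cautions there: negativity of the lapse at $x^*$ alone does not produce an intersection \emph{near} $\Psi(\lambda,x^*)$; one needs a separation argument (Jordan--Brouwer after reflecting across $\R^2\times\{0\}$) plus connectedness of the leaf, joining a point where the lapse is negative to one where it is positive. And to conclude that \emph{no} CMC foliation exists when $v_0>1$, you must invoke the uniqueness statement of Theorem \ref{theorem3CMC} to know the $\phi^{\gamma(\lambda),\lambda}$ are the only admissible leaves.

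The one place where your proposal materially understates the difficulty is the coefficient $c_2$. It is not obtained by ``direct evaluation of integrals on $\Sp^2_+$'': since the surfaces are defined only implicitly, the $\lambda^4$-coefficient of $A[\phi^{a,\lambda}]$ involves the first-order correction $u'(0)$ of the graph function, which must itself be determined by solving a linear elliptic boundary-value problem on $\Sp^2_+$ (the linearization of the constrained CMC system, Equation \eqref{CMCFinalPDE}, solved explicitly in \eqref{ExplicitSolutionCMCCase}), together with the second variations $D_1^2A$, $D_{12}^2A$, $D_2^2A$ and the terms $D_1A[0,\delta]u''(0)$, $D_2A[0,\delta]\tilde g''(0)$; the $u''(0)$-term is extracted indirectly by differentiating the volume constraint twice. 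This is the bulk of the paper's work (Section \ref{outlinesection} and the Appendix, partly by machine computation). You correctly flag this as the main obstacle, but a referee would not accept it as a routine integral evaluation; as written, the proof of the key identity $|\gamma'(0)|=v_0$ is asserted rather than established.
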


By explicit construction, we also verify that in Theorems \ref{theoremWillmore} and \ref{theoremCMC} $v_0$ can take all values in $[0,\infty)$, which implies in particular, that both cases in the two theorems are possible. \\

In the Riemannian setting, the unconditional existence of a foliation follows from the fact that the curve in (2) satisfies $\dot\gamma(0)=0$. Using this terminology, this was first shown by Hofmeister in her Master thesis \cite{hofmesiter}. In Subsection \ref{RiemannianSetting}, we demonstrate how $\dot\gamma(0)=0$ in the Riemannian setting follows from a symmetry argument. Here we also demonstrate that this symmetry argument does not carry over to the `domain setting' that we investigate here. \\

 Surfaces of Willmore type and CMC surfaces have both been widely discussed in the literature.
In addition to the already mentioned articles, we also refer to \cite{hofmesiter}, \cite{malchiodispheres}, \cite{lammfoliation} and \cite{Mondino} for the construction of Willmore type spheres in Riemannian manifolds. A similar approach for the conformal Willmore functional is presented in \cite{ConformalMondino}. In addition to the already mentioned \cite{ye}, we also refer to \cite{PacardCMC} for the construction of CMC spheres in a Riemannian manifold. The study of half-spheres attached to a domain's boundary originates from the aforementioned article by Belletini and Fusco \cite{bellettinifusco} and was adapted by Alessandroni and Kuwert for the Willmore functional in \cite{AK}. The integral methodology of all these works is a form of Laypunov-Schmidt reduction pioneered in the previously mentioned article by Ye \cite{ye}. For a general overview on the construction of small Willmore type spheres via Lyapunov-Schmidt reduction, we refer the reader to \cite{malchiodireview}. A construction of area--constrained critical Willmore spheres via the direct method is achieved in \cite{LammDIrectMethod}. Another type of foliation results where one foliates the asymptotic region of asymptotically flat manifolds by Willmore type or CMC surfaces is established in \cite{lammfoliationsasymp} in the  Willmore and in \cite{YeAsymp} in the CMC case. Finally we remark, that the preprint \cite{montenegro} is in contrast to our result.\\

The proofs for Theorems \ref{theoremWillmore} and \ref{theoremCMC}, respectively, essentially rely on establishing the following Expansions for the Willmore energy and the area functional. Given a critical surface $\phi^{a,\lambda}\in \mathcal M_\lambda(S)$ or $\mathcal N_\lambda(S)$ with barycenter $a\in S$ we prove 
\begin{align}
\mathcal W[\phi^{a,\lambda}]&=2\pi-\lambda\pi H^S(a)+\frac12\pi\left(K^S(a)+\left(\ln(2)-\frac32\right)  H^S(a)^2\right)\lambda^2+\mathcal O(\lambda^3),\label{IntroWillmoreEnergyExp}\\
\mathcal A[\phi^{a,\lambda}]&=\lambda^2\left(2\pi-\lambda\frac\pi4 H^S(a)+\frac12\pi\left(\frac16K^S(a) -\frac{35}{192} H^S(a)^2\right)\lambda^2+\mathcal O(\lambda^3)\right).\label{IntroAreaEnergyExp}
\end{align}

The main difficulty in establishing these expansions is that the surfaces $\phi^{a,\lambda}$ are only defined implicitly by the implicit function theorem. To establish Equations \eqref{IntroWillmoreEnergyExp} and \eqref{IntroAreaEnergyExp}, we compute explicitly the first non-trivial deviation of $\phi^{a,\lambda}$ from a scaled half-sphere $\lambda\Sp^2_+$ by linearising Equations \eqref{problem} and \eqref{CMCEquationIntro} respectively and subsequently solving the resulting linear PDE's. Thereby, establishing \eqref{IntroWillmoreEnergyExp} and \eqref{IntroAreaEnergyExp} is effectively completely reduced to an explicit computation which we execute by machine calculation.\\

This article is structured as follows: Section \ref{Preliminaries} provides notation and proper definitions. In Section \ref{ConstructionSection}, we outline the construction of Willmore type half-spheres that was developed in \cite{AK}. Additionally, we demonstrate how the reasoning from \cite{AK} can be extended to CMC surfaces and the Riemannian setting. Finally, we demonstrate how the aforementioned symmetry-argument from the Riemannian setting fails to apply here. Section \ref{mainTheoremSection} gives the proof of Theorems \ref{theoremWillmore} and \ref{theoremCMC} while assuming Equations \eqref{IntroWillmoreEnergyExp} and \eqref{IntroAreaEnergyExp}. Also, it is established that both cases in Theorems \ref{theoremWillmore} and \ref{theoremCMC} can occur. In Section \ref{outlinesection}, the derivations of Equations \eqref{IntroWillmoreEnergyExp} and \eqref{IntroAreaEnergyExp} is outlined. A significant part of the relevant computations has been executed with Mathematica \cite{mathematica}. Details on the computations are provided in the \hyperref[foliationsection]{Appendix}. The Mathematica notebooks are provided as supplementary material.
\setcounter{equation}{0}
\section{Preliminaries}\label{Preliminaries}
In the following, let $\alpha\in(0,1)$ be fixed but arbitrary. Additionally
\begin{align*}
    \R^3_+&:=\set{(x_1,x_2,x_3)\in\R^3\ |\ x_3\geq 0}\\
    \Sp^2_+&:=\set{(\omega_1,\omega_2,\omega_3)\in\R^3\ |\ \omega_3\geq 0\textrm{ and }\omega_1^2+\omega_2^2+\omega_3^2=1}.
\end{align*}

\subsection{Terminology}\label{terminilogy}
Let $\tilde g$ be a metric on $\R^3$ close to the euclidean metric $\delta$ in $C^ l$ for large enough $l\in\N$, consider a suitably smooth immersion $f:\Sp^2_+\rightarrow (\R^3,\tilde g)$ and put $g:=f^*\tilde g$. We denote the inner normal ($-\omega$ for the round half-sphere in euclidean space) of $f$ with respect to $\tilde g$ by $\tilde\nu$ and its inner conormal ($e_3$ for the round half-sphere in euclidean space) by $\tilde \eta$. We define the mean curvature $H[f,\tilde g]$ of $f$ with the convention that for the round half-sphere in euclidean space $H=2$. Let $h^ 0$ denote the traceless second fundamental form of $f$ and $\operatorname{Ric}_{\tilde g}$ the Ricci tensor of $(\R^3,\tilde g)$. The scalar Willmore gradient is then given by 
\begin{equation}\label{Willmoreoperatordefinition}
W[f,\tilde g]:=\frac12\left(\Delta_gH+(|h^ 0|^2+\operatorname{Ric}_{\tilde g}(\tilde\nu,\tilde\nu))H\right).
\end{equation}
For a proof, see Theorem 1 in \cite{AK} and note the following difference in conventions: We have included $1/2$ in the definition of $W$. Given a function $F[f,\tilde g]$ that is invariant under reparameterizations (e.g. the area $A$), we denote the $L^2(g)$-gradient along the normal bundle by $\nabla F[f,\tilde g]$. That is, for $\psi:\Sp^2_+\rightarrow\R$
$$\frac d{ds}\bigg|_{s=0} F[f+s\psi\tilde\nu,\tilde g]=\int_{\Sp^2_+}\nabla F[f,\tilde g]\psi d\mu_{g}.$$
As an example, denoting the inclusion $\Sp^2_+\hookrightarrow\R^3$ by $f_0$ we have $\nabla A[f_0,\delta]=-H[f_0,\delta]=-2$.\\

The standard Laplacian on $\Sp^2$ is always denoted by $\Delta$.\\

Let $r:\Sp^2_+\rightarrow\Sp^2_+$ denote the reflection $r(\omega_1,\omega_2,\omega_3)=(-\omega_1,-\omega_2,\omega_3)$. We say that a function $u:\Sp^2_+\rightarrow\R$ is \emph{even} when $u\circ r=u$ and that it is \emph{odd} when $u\circ r=-u$.

\paragraph{Differentials}\ \\
If $f:A_1\times...\times A_k\rightarrow B$ is a differentiable map, then we denote by $D_i f$ the derivative with respect to the $i$-th component. So for example
$$D_1 f(a_1,...,a_k) v:=\frac{d}{dt}\bigg|_{t=0}f(a_1+tv, a_2,...,a_k).$$
The second derivatives are denoted by $D^2$. For example 
$$D_1^2 f(a,b) (v,w):=\frac{d}{dt}\bigg|_{t=0}\frac{d}{ds}\bigg|_{s=0}f(a+tv+sw, b)
\hspace{.2cm}\textrm{or}\hspace{.2cm}
D_{1,2}^2 f(a,b) (v,w):=\frac{d}{dt}\bigg|_{t=0}\frac{d}{ds}\bigg|_{s=0}f(a+tv, b+sw).
$$

\paragraph{Summation Convention}\ \\
We use the following summation convention. Every repeated index is summed over. If the index is Latin, it takes the values $i=1,2$; if it is Greek, it takes the values $\mu=1,2,3$.

\subsection{Blow Up at the Boundary}\label{construction}
In this subsection, we review the setup to construct critical surface achieved in \cite{AK} and collect important formulas for our purposes. For details, we refer to \cite{AK}. 
Let $\Omega\subset\R^3$ be a smooth and bounded domain and denote the interior normal along $S:=\partial\Omega$ by $N^S$. Let $a\in S$ and $b_1, b_2\in T_aS$ be an orthonormal basis of $T_a S$.\\

For $r>0$, let $D_{r}:=\set{x\in\R^2\ |\ |x|<r}$. There exists $r_0=r_0(S)>0$, a neighbourhood $U\subset S$ of $a$ and a smooth function $\varphi^{a}:D_{r_0}\rightarrow \R$, such that 
$$f^a:D_{r_0}\rightarrow S,\ f^p(x):=p+x_1 b_1+x_2 b_2+\varphi^a(x)N^S(a)$$
is a parameterization of $U$. $\varphi^a$ satisfies $\varphi^a(0)=0$, $D\varphi^a=0$ as well as the estimates 
$$|\varphi^a(x)|\leq C(\Omega)|x|^2\hspace{.3cm}\text{and}\hspace{.3cm}|D\varphi^a(x)|\leq C(\Omega)|x|.$$
After potentially shrinking $U$ and $r_0$, we can extend $f^a$ to a diffeomorphism 
$$F^a:D_{r_0}(0)\times(-r_0,r_0)\rightarrow \operatorname{im}(F^a)\subset\R^3,\ F^a(x,z):=f^a(x)+z N^S(a).$$
By compactness of $S$, the radius $r_0$ can be chosen uniformly over all $a\in S$. Let $\lambda_0(S)>0$ so that $2\lambda_0<r_0$. Then, for $\lambda\leq\lambda_0$, the following map is well defined:
$$F^{a,\lambda}:\bar D_2(0)\times [-2,2]\rightarrow\R^3,\ F^{a,\lambda}(x,z):=F^a(\lambda x,\lambda z)$$
Finally, we introduce the scaled-pullback metric 
\begin{equation}\label{metricdefinition}
\tilde g^{a,\lambda}:=\frac1{\lambda^2}\left(F^{a,\lambda}\right)^*\delta.
\end{equation}
Using that $\Omega\in C^\infty$, the discussion leading up to Equation (3.7) in \cite{AK} implies that\\ $\|\tilde g^{a,\lambda}-\delta\|_{C^k(\bar B_2(0)\times[-2,2])}\leq C(k)\lambda$ for all $k\in\N$.

\subsection{Almost Half-Spheres}
For $\Omega$ as described above and $S=\partial\Omega$, we consider the following classes:
\begin{align*} 
\mathcal M^{4,\alpha}(S)&:=\set{\phi\in C^{4,\alpha}(\Sp^2_+,\R^3)\ |\ \text{immersed, $\phi(\partial \Sp^2_+)\subset S$ and $\phi\perp S$ along $\partial \Sp^2_+$}},\\
\mathcal N^{2,\alpha}(S)&:=\set{\phi\in C^{2,\alpha}(\Sp^2_+,\R^3)\ |\ \text{immersed, $\phi(\partial \Sp^2_+)\subset S$ and $\phi\perp S$ along $\partial \Sp^2_+$}}.
\end{align*}
The prototype for surfaces $\phi\in \mathcal M^{4,\alpha}(S),\ \mathcal N^{2,\alpha}(S)$ respectively that are of interest to us are constructed by choosing a $C^{4,\alpha}(\Sp^2_+),\ C^{2,\alpha}(\Sp^2_+)$-small graph function $u:\Sp^2_+\rightarrow\R$ and considering
\begin{equation}\label{findexunot}
F^{a,\lambda}\circ f_u\hspace{.5cm}\textrm{where}\hspace{.5cm}f_u:\Sp^2_+\rightarrow\R^3,\ f_u(\omega):=(1+u(\omega))\omega.
\end{equation}
Such surfaces are referred to as \emph{almost half-spheres}. Closely related to the study of these almost half-spheres is the analysis of the surfaces $f_u$ in $\R^3_+$ equipped with the metric $\tilde g^{a,\lambda}$. In the following, we collect some properties of these surfaces. For these to hold we require that $\|u\|_{C^{4,\alpha}(\Sp^2_+)}<\theta_0$ or $\|u\|_{C^{2,\alpha}(\Sp^2_+)}<\theta_0$ respectively and $\lambda<\lambda_0$, where $\lambda_0$ and $\theta_0$ are bounds that only depend on $S$.\\

The surfaces $F^{a,\lambda}\circ f_u$ are immersed. To belong to the classes $\mathcal M^{4,\alpha}(S)$ or $\mathcal N^{2,\alpha}(S)$ respectively, $f_u$ must meet $\R^2\times 0$ orthogonally with respect $\tilde g^{a,\lambda}$, which is guaranteed, if the interior unit normal $\tilde \nu[u,\tilde g^{a,\lambda}]$ satisfies $\tilde\nu^3[u,\tilde g^{a,\lambda}]=0$ along $\partial \Sp^2_+$ (see Lemma 3 in \cite{AK}). For a general background metric $\tilde g$, the following formula is established in \cite{AK}:
\begin{equation}\label{GeneralNormalFormula}
\tilde\nu[u,\tilde g](\omega)=-\frac{\omega-g^{ij}\tilde g(\omega,\partial_i f_u)\partial_j f_u}{\sqrt{\tilde g(\omega,\omega)-g^{ij}\tilde g(\omega,\partial_i f_u)\tilde g(\omega, \partial_j f_u)}}
\end{equation}
The surfaces $F^{a,\lambda}\circ f_u$ have a well defined surface area, which satisfies 
$$A[F^{a,\lambda}\circ f_u]=\lambda^2 A[f_u,\tilde g^{a,\lambda}].$$
Additionally, the surface $\phi:=F^{a,\lambda}\circ f_u$ separates $\Omega$ into a large exterior part and a small interior part, which we denote by $\Omega_\phi$. We define the \emph{volume} of $\phi$ as 
$$V[\phi]=V[F^{a,\lambda}\circ f_u]=|\Omega_\phi|.$$
Similarly, $f_u$ separates $\R^3_+$ into a large exterior and a small interior domain $\Omega_u$. If $\tilde g$ is a metric on $\R^3_+$, we put 
$$V[f_u,\tilde g]:=\int_{\Omega_u}\sqrt{\det\tilde g}d^3x
\hspace{.5cm}\textrm{so that }\hspace{.5cm}
V[F^{a,\lambda}\circ f_u]=\lambda^3 V[f_u,\tilde g^{a,\lambda}].$$
There exists a nonlinear projection $C$ that maps $F^{a,\lambda}\circ f_u$ to a point $C[F^{a,\lambda}\circ f_u]\in S$, which we refer to as the surfaces (Riemannian) barycenter. This definition coincides with the origin for the round half-sphere attached to $\R^2$. Similarly an analogue projection $C$ for immersions $f_u:\Sp^2_+\rightarrow(\R^3,\tilde g)$ to $\R^2$ can be constructed. These two projections satisfy 
\begin{equation}\label{barycenteridentity}
C[F^{a,\lambda}\circ f_u]=F^{a,\lambda}[ C[f_u,\tilde g^{a,\lambda}]].
\end{equation}
The concept of the Riemannian barycenter is originally due to Karcher \cite{karcher}. We use a slight variant of the local version introduced in \cite{AK}, which is presented in Appendix D in \cite{Metsch}. Finally, we note that in \cite{Metsch}, Appendix D, it is also shown that for small enough $\lambda_0$ and $\theta_0$ each $\phi=F^{a,\lambda}\circ f_u$ may be \emph{parameterized over its barycenter}. That is, there exists a parameterization of the form $\phi=F^{C[\phi],\lambda}\circ f_{\tilde u}$. The parameterization depends on the orthonormal frame chosen at $C[\phi]$ but is unique once a frame is fixed.
For small $\lambda>0$, we define 
\begin{align*} 
\mathcal M^{4,\alpha}_\lambda(S)&:=\left\{\phi\in\mathcal M^{4,\alpha}(S)\ \bigg|\ 
        \begin{array}{l}
            \textrm{$\phi$ is of the form in Equation \eqref{findexunot}, parameterized}\\
            \textrm{over its barycenter and $A[\phi]=2\pi\lambda^2$}
        \end{array}
\right\},\\
\mathcal N^{2,\alpha}_\lambda(S)&:=\left\{\phi\in\mathcal N^{2 ,\alpha}(S)\ \bigg|\ 
        \begin{array}{l}
            \textrm{$\phi$ is of the form in Equation \eqref{findexunot}, parameterized}\\
            \textrm{over its barycenter and $V[\phi]=\frac23\pi\lambda^3$}
        \end{array}
\right\}.
\end{align*}
 Finally, we note that for functionals such as the area, we often write $A[u,\tilde g]:=A[f_u,\tilde g]$ and also $\nabla A[u,\tilde g]:=\nabla A[f_u,\tilde g]=-H[f_u,\tilde g]=:-H[u,\tilde g]$.

\setcounter{equation}{0}
\section{The Construction of Critical Surfaces}\label{ConstructionSection}
In this Section, we outline the arguments by Alessandroni and Kuwert \cite{AK} to construct surfaces of Willmore type. We then show how to adapt their arguments to the CMC case and make some comments on the related Riemannian problem.
\subsection{Construction of Surfaces of Willmore Type}
To solve Equation \eqref{problem}, we make the ansatz $\phi=F^{a,\lambda}\circ f_u$. Using the diffeomorphism $F^{a,\lambda}$ to pull back \eqref{problem} to $(\R^3_+,\tilde g^{a,\lambda})$  yields the Equation 
\begin{equation}\label{pullbackeq1}
\left\{\begin{aligned}
    &W[u,\tilde g^{a,\lambda}]=\alpha H[u,\tilde g^{a,\lambda}],\\
    &B[u,\tilde g^{a,\lambda}]=0,\\
    &A[u,\tilde g^{a,\lambda}]=2\pi.
\end{aligned}    \right.
\end{equation}
$B$ collects the boundary conditions of $\phi$ meeting $S$ orthogonally as well as the third order condition in \eqref{problem}. Following \cite{AK}, it is given by\footnote{Note a slight difference in the definition of the first component. The two definitions are, however, equivalent.}
\begin{equation}\label{BoundaryOperatorDefinition}
B[u,\tilde g^{a,\lambda}]=(B_1[u,\tilde g^{a,\lambda}],B_2[u,\tilde g^{a,\lambda}]):=\left(\omega_3-g^{ij}\tilde g^{a,\lambda}(\omega,\partial_i f_u)\partial_j f_u^3, \frac{\partial H}{\partial\tilde\eta}+H\tilde h^{\R^2}(\tilde\nu,\tilde\nu)\right).
\end{equation}
The philosophy of \cite{AK} is to solve \eqref{pullbackeq1} with the implicit function theorem for an arbitrary abstract background metric $\tilde g$ that is close to the euclidean metric $\delta$ to give $u=u[\tilde g]$. However, due to the translation and scaling invariance of the Willmore energy in euclidean space, the linearized operator has a kernel. To overcome this, the ansatz is modified by prescribing the barycenter as $C[\phi]=a$ or, for an abstract background metric in the pulled-back picture, $C[u,\tilde g]=0$. This leads to the problem
\begin{equation}\label{pullbackeq2}
\left\{\begin{aligned}
    &W[u,\tilde g]=\alpha H[u,\tilde g]+\beta_i\nabla C^i[u,\tilde g],\\
    &B[u,\tilde g]=0,\\
    &A[u,\tilde g]=2\pi,\\
    &C[u,\tilde g]=0.
\end{aligned}\right.    
\end{equation}
The new Lagrange multipliers reflect the addition of the new constraint. 
The two constraints $A=2\pi$ and $C=0$ fix the kernel of the linearized operator. In Lemma 6 in \cite{AK}, it is then demonstrated that the implicit function theorem is now applicable to give a unique solution 
$$(u,\alpha,\beta_i)=(u[\tilde g],\alpha[\tilde g],\beta_i[\tilde g])\textrm{ in a neighbourhood of }(0,0,0,0)\in C^{4,\alpha}(\Sp^2_+)\times\R^3.$$
In particular, the following Theorem is established by taking $\tilde g^{a,\lambda}$ as $\tilde g$. The content of this Theorem is a summary of Lemma 6, Proposition 1 and Theorem 2 as well as its preceding discussion from \cite{AK}:
\begin{theorem}\label{graphfunction}
Let $\Omega\in C^\infty$ be a bounded domain. There exists $\lambda_0(\Omega)$ and a neighbourhood $U$ of $(0,0,0)\in C^{4,\alpha}(\Sp^2_+)\times\R^3$ such that for every $a\in S$ and $\lambda\leq\lambda_0$ there exists a unique solution $(u^{a,\lambda},\alpha^{a,\lambda},\beta_i^{a,\lambda})\in U$ to \eqref{pullbackeq2}. Moreover the map
$$S\times[0,\lambda_0(S)]\ni (a,\lambda)\mapsto (u^{a,\lambda},\alpha^{a,\lambda},\beta_i^{a,\lambda})\in U$$
is smooth. Additionally, $u^{a,\lambda}\in C^\infty(\Sp^2_+)$ and for all $k\geq 4$
\begin{equation}\label{uestimate}
\|u\|_{C^{k,\alpha}}\leq C(\Omega,k)\lambda.
\end{equation}
\end{theorem}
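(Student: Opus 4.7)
The plan is to apply the implicit function theorem to \eqref{pullbackeq2} viewed as an equation parameterized by the background metric $\tilde g$, and then to specialize to $\tilde g=\tilde g^{a,\lambda}$, exploiting the smooth dependence of $\tilde g^{a,\lambda}$ on $(a,\lambda)$ and its $O(\lambda)$-closeness to $\delta$ recorded at the end of Subsection \ref{construction}.

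First I would set up the nonlinear map
$$F(u,\alpha,\beta,\tilde g):=\bigl(W[u,\tilde g]-\alpha H[u,\tilde g]-\beta_i\nabla C^i[u,\tilde g],\ B[u,\tilde g],\ A[u,\tilde g]-2\pi,\ C[u,\tilde g]\bigr)$$
between appropriate Banach spaces, with $(u,\alpha,\beta)\in C^{4,\alpha}(\Sp^2_+)\times\R\times\R^3$ on the domain side and with the bulk target in $C^{0,\alpha}(\Sp^2_+)$, the boundary target in the corresponding Hölder spaces on $\partial\Sp^2_+$ for $B_1,B_2$, plus a factor $\R\times\R^3$ for the scalar and barycenter outputs. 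At the base point $(0,0,0,\delta)$ the round half-sphere $f_0$ satisfies $W[0,\delta]=0$ (it is umbilic with constant $H=2$ in a flat ambient space), meets $\R^2\times\{0\}$ orthogonally, has area $2\pi$ and barycenter $0$, so $F(0,0,0,\delta)=0$.

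The crucial step is to verify that the partial differential $D_{(u,\alpha,\beta)}F(0,0,0,\delta)$ is a Banach space isomorphism. Linearizing the unconstrained bulk equation in euclidean space from \eqref{Willmoreoperatordefinition} yields a self-adjoint fourth-order elliptic operator on $\Sp^2_+$, coupled with a Robin-type boundary operator coming from $DB[0,\delta]$. Its unconstrained kernel, by translation and dilation invariance of the euclidean Willmore equation together with the reflection symmetry across $\R^2\times\{0\}$ that suppresses the normal translation mode, is exactly three-dimensional, spanned by the constant $1$ (scaling) and $\omega_1,\omega_2$ (tangential translations). The Lagrange multiplier columns contribute $-\alpha H[0,\delta]=-2\alpha$ and $-\beta_i\nabla C^i[0,\delta]$, and the added rows $DA[0,\delta]\cdot u$ and $DC^i[0,\delta]\cdot u$ pair non-degenerately with the three kernel directions. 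Isomorphism therefore reduces to checking this finite-dimensional non-degeneracy, which is the content of Lemma 6 in \cite{AK}. This pairing argument is the main technical obstacle.

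Granted invertibility, the implicit function theorem produces an open neighborhood $U$ of $(0,0,0)$ in $C^{4,\alpha}(\Sp^2_+)\times\R\times\R^3$ and a smooth map $\tilde g\mapsto(u[\tilde g],\alpha[\tilde g],\beta[\tilde g])$, defined on a $C^l$-neighborhood of $\delta$ for sufficiently large $l$, uniquely solving $F=0$ inside $U$. Since $(a,\lambda)\mapsto\tilde g^{a,\lambda}$ is smooth and $\|\tilde g^{a,\lambda}-\delta\|_{C^k}\leq C(k)\lambda$, substitution yields the smooth family $(a,\lambda)\mapsto(u^{a,\lambda},\alpha^{a,\lambda},\beta^{a,\lambda})$, and from $u[\delta]=0$ together with smoothness of $\tilde g\mapsto u[\tilde g]$ a first-order Taylor argument produces $\|u^{a,\lambda}\|_{C^{4,\alpha}}\leq C(\Omega)\lambda$. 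Finally, $u^{a,\lambda}\in C^\infty(\Sp^2_+)$ and the bounds \eqref{uestimate} for all $k\geq 4$ follow by a standard elliptic bootstrap: differentiating the fourth-order equation $W[u,\tilde g^{a,\lambda}]=\alpha^{a,\lambda} H+\beta^{a,\lambda}_i\nabla C^i$ together with its boundary operator $B$ and iterating Schauder estimates gains one derivative per step, with all constants uniform in $\lambda$.
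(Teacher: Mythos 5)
Your proposal is correct and follows essentially the same route as the paper, which itself establishes this theorem by applying the implicit function theorem to the constrained problem \eqref{pullbackeq2} for an abstract background metric and defers the key invertibility of the linearization (kernel spanned by $1,\omega_1,\omega_2$, fixed by the area and barycenter constraints) to Lemma 6 of \cite{AK}, exactly as you do. The only slip is bookkeeping: the barycenter $C$ in the half-sphere setting has two components, so $\beta=(\beta_1,\beta_2)\in\R^2$ and the multiplier space is $\R\times\R^2=\R^3$, not $\R\times\R^3$.
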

For $a\in S$ and  $\lambda>0$ sufficiently small, the surface
\begin{equation}\label{phiplambdaWillmore}
\phi^ {a,\lambda}:=F^{a,\lambda}\circ f_{u^{a,\lambda}}
\end{equation}
is now a candidate for a solution to \eqref{problem}. However, the solution $u^{a,\lambda}$ from Theorem \ref{graphfunction} is, in general, not a solution to Problem \eqref{pullbackeq1} since in general $\beta_i^{a,\lambda}\neq 0$. To obtain a solution to \eqref{problem} or \eqref{pullbackeq1} respectively one studies the \emph{reduced functional}
$$\mathcal{\bar W}:S\times[0,\lambda_0)\rightarrow\R,\ (a,\lambda)\mapsto \mathcal W[\phi^{a,\lambda}]=\mathcal W[u^{a,\lambda},\tilde g^{a,\lambda}].$$
We paraphrase Theorem 2 from \cite{AK}:
\begin{theorem}\label{AKTheo2}
Let $\Omega\subset\R^3$ be a smooth, bounded domain and $S:=\partial\Omega$. Then for $\lambda\in[0,\lambda_0]$ a point $a\in S$ is critical for $\bar{\mathcal W}(\cdot,\lambda)$ if and only if $\phi^{a,\lambda}$ is a solution to \eqref{problem} or, equivalently, $u^{a,\lambda}$ solves problem \eqref{pullbackeq1}. 
\end{theorem}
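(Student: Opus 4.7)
The statement is the classical Lyapunov--Schmidt reduction identity. Since $u^{a,\lambda}$ from Theorem~\ref{graphfunction} solves the auxiliary problem \eqref{pullbackeq2}, which differs from \eqref{pullbackeq1} only by the Lagrange multipliers $\beta_i^{a,\lambda}$ associated with the barycenter constraint, proving the theorem amounts to showing that $\beta^{a,\lambda} = 0$ is equivalent to $a$ being critical for $\bar{\mathcal W}(\cdot,\lambda)$. I will establish this by expressing $d_a \bar{\mathcal W}$ in terms of $\beta^{a,\lambda}$.

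First I would fix $\lambda$, pick $v\in T_aS$, and differentiate $\bar{\mathcal W}(a,\lambda)=\mathcal W[\phi^{a,\lambda}]$ along a smooth curve $a(s)\subset S$ with $a(0)=a$, $\dot a(0)=v$. The family $\phi^{a(s),\lambda}$ is a smooth admissible variation in the sense that each member lies in $\mathcal M^{4,\alpha}_\lambda(S)$, so the orthogonality and $\phi(\partial \Sp^2_+)\subset S$ are automatic. Writing $\psi_v$ for the normal component of $\partial_s\phi^{a(s),\lambda}\big|_{s=0}$, the first variation formula for the Willmore functional on surfaces with orthogonal free boundary yields
$$
d_a\bar{\mathcal W}(a,\lambda)[v] \;=\; \int_{\Sp^2_+} W[\phi^{a,\lambda}]\,\psi_v\,d\mu \;+\; \int_{\partial\Sp^2_+}\!\Bigl(\tfrac{\partial H}{\partial\eta}+Hh^S(\nu,\nu)\Bigr)\psi_v\,ds,
$$
and the boundary integral vanishes identically because $B_2[u^{a,\lambda},\tilde g^{a,\lambda}]=0$ by \eqref{pullbackeq2}.

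Next I would substitute the Euler--Lagrange equation $W=\alpha^{a,\lambda}H+\beta_i^{a,\lambda}\nabla C^i$ from \eqref{pullbackeq2}, after translating it from the pulled-back to the ambient picture via $F^{a,\lambda}$ (using that $F^{a,\lambda}$ is a diffeomorphism and that $\tilde g^{a,\lambda}=\lambda^{-2}(F^{a,\lambda})^*\delta$, so $W$, $H$ and $\nabla C^i$ correspond to the ambient $L^2$-gradients up to uniform powers of $\lambda$ that ultimately cancel). The area term drops out, since $\int_{\Sp^2_+}H\psi_v\,d\mu = -\tfrac{d}{ds}\big|_{s=0}A[\phi^{a(s),\lambda}]=0$ by the constraint $A\equiv 2\pi\lambda^2$. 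The barycenter term is evaluated using Equation~\eqref{barycenteridentity}, which gives $C[\phi^{a(s),\lambda}]=F^{a(s),\lambda}(0)=a(s)$, so $\int_{\Sp^2_+}\nabla C^i\psi_v\,d\mu=\tfrac{d}{ds}\big|_{s=0}C^i[\phi^{a(s),\lambda}]=v^i$ in the frame $\{b_1,b_2\}$ at $a$. Together these yield the clean identity $d_a\bar{\mathcal W}(a,\lambda)[v]=\beta_i^{a,\lambda}v^i$, from which both implications of the theorem drop out: $a$ is critical iff $\beta^{a,\lambda}=0$ iff $u^{a,\lambda}$ actually solves \eqref{pullbackeq1} iff $\phi^{a,\lambda}$ solves \eqref{problem}.

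\textbf{Main obstacle.} The nontrivial piece of bookkeeping is the third step: the multipliers $\beta_i^{a,\lambda}$ live in the pulled-back picture with respect to the metric $\tilde g^{a,\lambda}$ and the coordinate barycenter $C:\Sp^2_+\to\R^2$, whereas the tangent vector $v\in T_aS$ and the derivative of $\bar{\mathcal W}$ live in the ambient picture. Tracking the chain rule carefully through $F^{a,\lambda}$ and the chosen orthonormal frame $\{b_1,b_2\}$ to confirm that the pairing $\beta_iv^i$ is indeed nondegenerate (so that $d_a\bar{\mathcal W}=0$ really forces $\beta=0$) is what requires care; once that identification is in place, the rest is bookkeeping of the first variation together with the area and barycenter constraints.
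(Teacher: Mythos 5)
The paper does not prove this theorem itself; it explicitly paraphrases Theorem~2 of \cite{AK} and defers the argument there, so there is no in-paper proof to compare against. Evaluated on its own merits, your proposal is correct: it is the standard Lyapunov--Schmidt reduction identity $d_a\bar{\mathcal W}(a,\lambda)[v]=c\,\beta_i^{a,\lambda}v^i$ (for a fixed nonzero scalar $c$), from which both directions of the equivalence follow. One small thing worth making explicit for the forward direction: if $u^{a,\lambda}$ solves \eqref{pullbackeq1}, then $(u^{a,\lambda},\alpha^{a,\lambda},0)$ solves \eqref{pullbackeq2} (the constraints $A=2\pi$ and $C^i=0$ are already built into $u^{a,\lambda}$), so the \emph{uniqueness} clause of Theorem~\ref{graphfunction} forces $\beta^{a,\lambda}=0$; this is the step that converts ``solves \eqref{pullbackeq1}'' into ``$\beta=0$''.

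As a streamlining remark addressing the bookkeeping you flag at the end: you can avoid separating out the bulk and boundary pieces of the first variation and then re-substituting the Euler--Lagrange equation. Equations \eqref{pullbackeq2} together with $B_2[u^{a,\lambda},\tilde g^{a,\lambda}]=0$ say precisely that, for any variation $\zeta$ preserving the class $\mathcal M^{4,\alpha}(S)$,
$$d\mathcal W[\phi^{a,\lambda}](\zeta)=\alpha^{a,\lambda}\,dA[\phi^{a,\lambda}](\zeta)+\beta_i^{a,\lambda}\,dC^i[\phi^{a,\lambda}](\zeta),$$
because the boundary contribution to $d\mathcal W$ is proportional to $B_2$ and thus vanishes. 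Applying this directly to $\zeta=\partial_s\phi^{a(s),\lambda}\big|_{s=0}$ and using $dA(\zeta)=\tfrac{d}{ds}\bigl(2\pi\lambda^2\bigr)=0$ and $dC^i(\zeta)=\dot a^i(0)=v^i$ (the latter via \eqref{barycenteridentity}) yields $d_a\bar{\mathcal W}[v]=\beta_i^{a,\lambda}v^i$ in one stroke. The only pulled-back-versus-ambient translation then required is the single observation that the Lagrange multipliers in the two pictures differ by fixed positive scalars, so $\beta^{a,\lambda}=0$ is a picture-independent condition; this is genuinely all the bookkeeping there is.
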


Next, we outline how the existence of critical points is established in \cite{AK}. The scalar mean curvature of $S=\partial\Omega$ is defined by $H^S:=\vec H^S\cdot N^S$ with the interior unit normal $N^S$. In Equation (3.10) in \cite{AK}, the expansion 
\begin{equation}\label{AKExpansion}
\bar{\mathcal W}[a,\lambda]=2\pi-\pi H^ S(a)\lambda+\mathcal O(\lambda^2)
\end{equation}
is derived. Using this, the following Theorem is established:
\begin{theorem}\label{theorem3}
Let $\Omega$ be a smooth, bounded domain and $p\in S=\partial\Omega$ be a nondegenerate critical point of $H^ S$. There exists $\lambda_0(S)>0$, a neighbourhood $U\subset S$ of $p$ and a smooth curve $\gamma:[0,\lambda_0)\rightarrow S$ such that $\gamma(0)=p$ and each $\phi^ {\gamma(\lambda),\lambda}$ is a critical point of $\mathcal W$ in $\mathcal M^ {4,\alpha}_\lambda(S)$. Additionally, for $\lambda<\lambda_0$ these are the only critical points of $\mathcal W$ in $\mathcal M^ {4,\alpha}_\lambda(S)$ with barycenter in~$U$.
\end{theorem}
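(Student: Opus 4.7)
The strategy is to reduce the problem, via Theorem \ref{AKTheo2}, to finding critical points of the reduced functional $\bar{\mathcal{W}}(\cdot,\lambda):S\to\R$ for each sufficiently small $\lambda$, and to produce those critical points by an implicit function theorem argument in a neighborhood of $p$.

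First, I would work in local coordinates $(a^1,a^2)$ on $S$ around $p$ in which $p$ corresponds to the origin and write $\nabla_S$ for the tangential gradient. Theorem \ref{graphfunction} guarantees that $(a,\lambda)\mapsto u^{a,\lambda}$ is smooth on $S\times[0,\lambda_0]$, so the reduced functional $\bar{\mathcal{W}}(a,\lambda)=\mathcal{W}[u^{a,\lambda},\tilde g^{a,\lambda}]$ is smooth as well. The expansion \eqref{AKExpansion} gives in particular $\bar{\mathcal{W}}(a,0)=2\pi$, so one may write
\begin{equation*}
\bar{\mathcal{W}}(a,\lambda)=2\pi+\lambda\,g(a,\lambda),\qquad g(a,\lambda):=\int_0^1\partial_\lambda\bar{\mathcal{W}}(a,t\lambda)\,dt,
\end{equation*}
with $g$ smooth on $S\times[0,\lambda_0]$ and $g(a,0)=\partial_\lambda\bar{\mathcal{W}}(a,0)=-\pi H^S(a)$. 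For every $\lambda>0$ the critical points of $\bar{\mathcal{W}}(\cdot,\lambda)$ on $S$ coincide with the zeros of the map $G(\cdot,\lambda):=\nabla_S g(\cdot,\lambda)$, while for $\lambda=0$ they are the critical points of $H^S$.

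Next I would apply the implicit function theorem to $G:S\times[0,\lambda_0]\to TS$ at the point $(p,0)$. Since $p$ is critical for $H^S$ one has $G(p,0)=-\pi\nabla_S H^S(p)=0$, and since $p$ is nondegenerate, the partial differential $D_a G(p,0)=-\pi\nabla_S^2 H^S(p)$ is invertible on $T_p S$. The implicit function theorem therefore yields $\lambda_0>0$, a neighborhood $U\subset S$ of $p$ and a smooth curve $\gamma:[0,\lambda_0)\to U$ with $\gamma(0)=p$ such that for every $\lambda\in[0,\lambda_0)$ the point $\gamma(\lambda)$ is the unique zero of $G(\cdot,\lambda)$ in $U$.

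Finally, Theorem \ref{AKTheo2} identifies each $\gamma(\lambda)$ with $\lambda>0$ as a point for which $\phi^{\gamma(\lambda),\lambda}$ solves \eqref{problem}, hence is a critical point of $\mathcal{W}$ in $\mathcal{M}^{4,\alpha}_\lambda(S)$; the uniqueness of $\gamma(\lambda)$ among zeros of $G(\cdot,\lambda)$ in $U$ translates, together with the uniqueness statement in Theorem \ref{graphfunction} that fixes $u^{a,\lambda}$ given its barycenter $a$, into the uniqueness assertion for critical surfaces with barycenter in $U$. The only genuinely nontrivial inputs are the smoothness of the reduced functional and the first-order expansion \eqref{AKExpansion}; given both, the remainder is a routine implicit function theorem argument, so the main obstacle in the overall proof lies upstream, in the derivation of \eqref{AKExpansion} carried out in \cite{AK}, rather than in the present reduction.
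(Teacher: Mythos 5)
Your proposal is correct and follows essentially the same route as the paper: the paper's proof (recorded inside the proof of Lemma \ref{velocitylemmaWillmore}) also divides out a factor of $\lambda$ from $\nabla_1\bar{\mathcal W}$ using $\bar{\mathcal W}(\cdot,0)\equiv 2\pi$ and an integral remainder, and then applies the implicit function theorem at $(p,0)$ with invertibility coming from $-\pi\nabla^2 H^S(p)$; your map $G(\cdot,\lambda)=\nabla_S g(\cdot,\lambda)$ is exactly the paper's function $v(\cdot,\lambda)$, with gradient and division by $\lambda$ taken in the opposite order. The translation back to critical surfaces via Theorem \ref{AKTheo2} and the uniqueness via Theorem \ref{graphfunction} matches the paper's reasoning as well.
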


\subsection{Construction of CMC Surfaces}
The original construction of CMC half-spheres with volume constraint originates from the article by Bellettini and Fusco \cite{bellettinifusco}. In this article, we do, however, not follow their construction but copy the strategy of Alessandroni and Kuwert \cite{AK}. Inserting the ansatz $\phi=F^{a,\lambda}\circ f_u$ into \eqref{CMCEquationIntro}  yields the following problem:
\begin{equation}\label{CMCEquationPullback}
\left\{\begin{aligned}
&H[u,\tilde g^{a,\lambda}]=\alpha,\\
&B_1[u,\tilde g^{a,\lambda}]=0,\\
&V[u,\tilde g^{a,\lambda}]=\frac{2\pi}3.
\end{aligned}\right.
\end{equation}
Again, the main idea is to use the implicit function theorem to solve for $u$ in dependence of an abstract background metric $\tilde g$ for which we can choose $\tilde g^{a,\lambda}$ at the end. Again this is not possible since the linearized operator has a kernel that is related to the invariance of the area functional under translations (in the euclidean case). To overcome this problem, we employ the same strategy and study
\begin{equation}\label{CMCEquationPullbackConstrained}
\left\{\begin{aligned}
&H[u,\tilde g]=\alpha+\beta_i\nabla C^i[u,\tilde g],\\
&B_1[u,\tilde g]=0,\\
&V[u,\tilde g]=\frac{2\pi}3,\\
&C^i[u,\tilde g]=0\hspace{.5cm}\textrm{for $i=1,2$}.
\end{aligned}\right.
\end{equation}
In complete analogy to Theorem \ref{graphfunction}, the following Theorem can be established:
\begin{theorem}\label{graphfunctionCMC}
Let $\Omega\in C^\infty$ be a bounded domain. There exists $\lambda_0(\Omega)$ and a neighbourhood $U$ of $(0,0,0)\in C^{2,\alpha}(\Sp^2_+)\times\R^3$ such that for every $p\in S$ and $\lambda\leq\lambda_0$ there exists a unique solution $(u^{a,\lambda},\alpha^{a,\lambda},\beta_i^{a,\lambda})\in U$ to \eqref{CMCEquationPullbackConstrained}. Moreover the map
$$S\times[0,\lambda_0(S)]\ni (a,\lambda)\mapsto (u^{a,\lambda},\alpha^{a,\lambda},\beta_i^{a,\lambda})\in U$$
is smooth. Additionally, $u^{a,\lambda}\in C^\infty(\Sp^2_+)$ and for all $k\geq 2$
\begin{equation}\label{uestimateCMC}
\|u\|_{C^{k,\alpha}}\leq C(\Omega,k)\lambda.
\end{equation}
\end{theorem}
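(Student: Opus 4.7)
The plan is to mimic the Willmore construction from \cite{AK} by applying the implicit function theorem to an operator built from \eqref{CMCEquationPullbackConstrained}, treating the background metric $\tilde g$ as a free parameter in a neighbourhood of the Euclidean metric $\delta$. Concretely, one sets
\[
F(u,\alpha,\beta,\tilde g) := \Bigl(H[u,\tilde g]-\alpha-\beta_i\nabla C^i[u,\tilde g],\; B_1[u,\tilde g],\; V[u,\tilde g]-\tfrac{2\pi}{3},\; C^1[u,\tilde g],\; C^2[u,\tilde g]\Bigr),
\]
viewed as a map from a $C^{2,\alpha}$-neighbourhood of $(u,\alpha,\beta,\tilde g)=(0,2,0,\delta)$ into $C^{0,\alpha}(\Sp^2_+)\times C^{1,\alpha}(\partial\Sp^2_+)\times\R\times\R^2$. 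At the base point $F(0,2,0,\delta)=0$, since $H[0,\delta]=2$, $B_1[0,\delta]=\omega_3|_{\partial\Sp^2_+}=0$, $V[0,\delta]=\frac{2\pi}{3}$, and $C^i[0,\delta]=0$.

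The heart of the proof is to verify that $L:=D_{(u,\alpha,\beta)}F(0,2,0,\delta)$ is a topological isomorphism. Using the standard first-variation formulas on the Euclidean round half-sphere together with the definition of $\nabla C^i$ as an $L^2$-gradient, a direct computation gives (up to overall signs)
\[
L(\phi,a,b) = \Bigl(-(\Delta+2)\phi-a-b_i\nabla C^i[0,\delta],\; \partial_{e_3}\phi|_{\partial\Sp^2_+},\; \int_{\Sp^2_+}\phi\,d\mu,\; \int_{\Sp^2_+}\phi\,\nabla C^i[0,\delta]\,d\mu\Bigr).
\]
Decomposing $\phi$ into spherical harmonics adapted to the Neumann condition $\partial_{e_3}\phi|_{\partial\Sp^2_+}=0$, one checks that $\ker(\Delta+2)=\spann\{\omega_1,\omega_2\}$: the function $\omega_3$ violates the Neumann condition on the equator, while constants are not $(-2)$-eigenfunctions. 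By the construction of the barycenter in Appendix~D of \cite{Metsch}, $\nabla C^i[0,\delta]\in\spann\{\omega_1,\omega_2\}$ for $i=1,2$, and one then solves $L(\phi,a,b)=(f,g,s,t_i)$ mode by mode: the $\ell=1$ projection of the first equation determines $b$; the combination of the volume constraint with the $\ell=0$ part of the first equation fixes both $a$ and the constant mode of $\phi$; invertibility of $\Delta+2$ on the $L^2$-complement of its kernel (with Neumann data) recovers the higher harmonics; and the barycenter constraints pin down the $\ell=1$ part of $\phi$. Injectivity and surjectivity follow simultaneously.

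With $L$ invertible, the implicit function theorem produces a smooth map $\tilde g\mapsto(u[\tilde g],\alpha[\tilde g],\beta[\tilde g])$ on a $C^l$-neighbourhood of $\delta$. Substituting the scaled pull-back metric $\tilde g=\tilde g^{a,\lambda}$ from \eqref{metricdefinition}, whose joint smoothness on $S\times[0,\lambda_0]$ and the estimate $\|\tilde g^{a,\lambda}-\delta\|_{C^k}\leq C(k)\lambda$ are recorded in Subsection~\ref{construction}, yields the claimed smooth parametrisation $(a,\lambda)\mapsto(u^{a,\lambda},\alpha^{a,\lambda},\beta_i^{a,\lambda})$ and the Lipschitz bound $\|u^{a,\lambda}\|_{C^{2,\alpha}}\leq C\lambda$. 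The higher regularity $u^{a,\lambda}\in C^\infty$ and the remaining estimates \eqref{uestimateCMC} for $k\geq 2$ follow from elliptic bootstrapping: the pulled-back CMC equation is a quasi-linear second-order elliptic PDE with an oblique Neumann-type boundary condition $B_1=0$ and $C^\infty$ coefficients in $\tilde g^{a,\lambda}$, so iterated Schauder theory promotes the regularity indefinitely while preserving the linear $\lambda$-scaling.

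I expect the only non-routine step to be the clean identification of the kernel and cokernel of $L$; everything else is a mechanical transcription of \cite{AK}. The two subtleties that must be handled with care are the precise form of the linearised boundary condition (the orthogonality equation $B_1=0$ must linearise to a genuine Neumann condition on $\phi$) and the fact that $\nabla C^i[0,\delta]$ lies inside $\spann\{\omega_1,\omega_2\}$, which is exactly what allows the Lagrange multipliers $\beta_i$ to absorb the translation modes and decouple them from the rest of the spectrum.
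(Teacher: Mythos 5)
Your proposal is correct and follows exactly the route the paper intends: the paper itself gives no detailed proof here but asserts the result ``in complete analogy'' to Theorem \ref{graphfunction}, i.e.\ the implicit function theorem applied to the constrained system \eqref{CMCEquationPullbackConstrained} with the metric as parameter, where the volume and barycenter constraints (with their Lagrange multipliers) compensate the kernel $\spann\{\omega_1,\omega_2\}$ of $\Delta+2$ under the Neumann condition, followed by substitution of $\tilde g^{a,\lambda}$ and elliptic bootstrapping. Your identification of the base point as $(0,2,0,\delta)$ (since $H[0,\delta]=2$) is in fact more careful than the statement's ``neighbourhood of $(0,0,0)$''.
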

Also, the rest of the analysis closely follows \cite{AK}. First the surface
\begin{equation}\label{phiplambdaCMC}
\phi^ {a,\lambda}:=F^{a,\lambda}\circ f_{u^{a,\lambda}}.
\end{equation}
is introduced as a candidate for a solution to \eqref{CMCEquationIntro}. To realize a solution, one then studies the reduced functional 
$$\bar A(a,\lambda):=A[\phi^{a,\lambda}]=\lambda^2 A[u^{a,\lambda},\tilde g^{a,\lambda}].$$
Following the proof of Theorem 2 in \cite{AK}, the following analog theorem can be established:
\begin{theorem}\label{AKTheo2CMC}
Let $\Omega\subset\R^3$ be a smooth bounded domain and $S:=\partial\Omega$. Then for $\lambda\in[0,\lambda_0]$, a point $a\in S$ is critical for $\bar{A}(\cdot,\lambda)$ if and only if $\phi^{a,\lambda}$ solves \eqref{CMCEquationIntro} or, equivalently,  $u^{a,\lambda}$ solves problem \eqref{CMCEquationPullback}. 
\end{theorem}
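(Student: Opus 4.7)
Writing $u = u^{a,\lambda}$, $\alpha = \alpha^{a,\lambda}$, $\beta_i = \beta_i^{a,\lambda}$, $\tilde g = \tilde g^{a,\lambda}$, and $\phi = \phi^{a,\lambda}$, the equivalence ``$\phi$ solves \eqref{CMCEquationIntro}'' $\Leftrightarrow$ ``$u$ solves \eqref{CMCEquationPullback}'' is immediate from the pullback construction, and \eqref{CMCEquationPullback} differs from \eqref{CMCEquationPullbackConstrained} only by the absence of the Lagrange-multiplier terms $\beta_i\nabla C^i$. Hence the uniqueness clause in Theorem \ref{graphfunctionCMC} reduces the theorem to proving the identity
$$D_a\bar A(a,\lambda)[v] \;=\; -\,c(\lambda)\,\beta_i\,v^i \qquad \forall\, v \in T_a S,$$
for some nonzero constant $c(\lambda)$, where $v^i$ denotes the components of $v$ in the orthonormal frame $(b_1,b_2)$ of $T_aS$ fixed in Subsection \ref{construction}.

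The plan is to mimic the Willmore argument of Theorem 2 in \cite{AK}, but for the area functional, and to work directly in the physical picture. First I would pick a smooth curve $a(t) \subset S$ with $a(0) = a$ and $\dot a(0) = v$. The family $\phi^{a(t),\lambda}$ of immersed surfaces in $\R^3$ admits a variational vector field $\xi := \partial_t|_{t=0}\phi^{a(t),\lambda}$ whose normal component I denote by $\psi := \langle \xi,\nu\rangle$, with $\nu$ the inner unit normal of $\phi$. Since the variation preserves the class $\mathcal N^{2,\alpha}(S)$, the boundary contribution in the first variation of area cancels (using that the orthogonality condition forces the inner conormal to equal $N^S$, while $\xi$ is tangent to $S$ along the boundary), so that
$$\frac{d}{dt}\bigg|_{t=0}\bar A(a(t),\lambda) \;=\; -\int_{\Sp^2_+} H[\phi]\,\psi\,d\mu_\phi.$$
Next I would translate the first equation of \eqref{CMCEquationPullbackConstrained} back to the physical picture via the rescaling $\tilde g = \lambda^{-2}(F^{a,\lambda})^*\delta$ and the barycenter identity \eqref{barycenteridentity}; this yields an Euler--Lagrange equation
$$H[\phi] \;=\; \tilde\alpha + \tilde\beta_i\,\hat\nabla C^i[\phi]$$
with rescaled multipliers $\tilde\alpha,\tilde\beta_i$ (from which $c(\lambda)$ will be read off) and with $\hat\nabla C^i$ the $L^2$-normal gradient of the physical Riemannian barycenter functional.

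Plugging in splits the right-hand side as $-\tilde\alpha\int\psi\,d\mu_\phi \;-\; \tilde\beta_i\int\hat\nabla C^i\,\psi\,d\mu_\phi$. The first term vanishes because $V[\phi^{a(t),\lambda}] \equiv \tfrac{2}{3}\pi\lambda^3$ is constant along the family. Since each $\phi^{a(t),\lambda}$ lies in $\mathcal N^{2,\alpha}_\lambda(S)$ and is therefore parameterized over its own barycenter $a(t)$, the first variation of the barycenter functional along the family is $\int \hat\nabla C^i\,\psi\,d\mu_\phi = \frac{d}{dt}|_{t=0}C^i[\phi^{a(t),\lambda}] = v^i$. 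Combining these yields the required identity and thus the theorem.

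The hard part will be the precise $\lambda$-bookkeeping relating the pullback multipliers $(\alpha,\beta_i)$ and the pullback gradient $\nabla C^i[u,\tilde g]$ to their physical counterparts $(\tilde\alpha,\tilde\beta_i)$ and $\hat\nabla C^i[\phi]$, and confirming that the variation of the Riemannian barycenter functional is indeed linear in $v$ via the parameterization-over-barycenter property recalled in Subsection \ref{construction}. Once these identifications are in place, the remainder is a direct transcription of the proof in \cite{AK}, with $\mathcal W$ replaced by $A$ and the area constraint replaced by the volume constraint.
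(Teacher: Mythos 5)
Your plan is exactly the argument the paper intends: it gives no independent proof of Theorem \ref{AKTheo2CMC} but defers to the proof of Theorem 2 in \cite{AK}, and your reduction of criticality of $\bar A(\cdot,\lambda)$ to the vanishing of the multipliers $\beta_i$ via the first variation formula, the volume constraint killing the $\alpha$-term, and the barycenter gradient producing $v^i$, is precisely that adaptation. The $\lambda$-bookkeeping you flag is routine (it only rescales the nonzero constant $c(\lambda)$), so the proposal is correct and follows the paper's approach.
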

In Appendix \ref{CMCOutline}, we establish the following expansion that is the analog of Equation \eqref{AKExpansion} for the CMC case:
\begin{equation}\label{AKExpansionCMC}
A[\phi^ {a,\lambda}]=\lambda^2 A[u^{a,\lambda},\tilde g^{a,\lambda}]=\lambda^2\left(2\pi-\frac\pi4 H^ S(a)\lambda+\mathcal O(\lambda^2)\right)
\end{equation}
Using Equation \eqref{AKExpansionCMC}, we can follow the proof of Theorem 3 in \cite{AK} to establish the following result.
\begin{theorem}\label{theorem3CMC}
Let $\Omega$ be a smooth, bounded domain and $p\in S=\partial\Omega$ a nondegenerate critical point of $H^ S$. There exists $\lambda_0(S)>0$, a neighbourhood $U\subset S$ of $p$ and a smooth curve\\ $\gamma:[0,\lambda_0)\rightarrow S$ such that $\gamma(0)=p$ and each $\phi^ {\gamma(\lambda),\lambda}$ is a critical point of $A$ in $\mathcal N^ {2,\alpha}_\lambda(S)$. Additionally, for $\lambda<\lambda_0$ these are the only critical points of $A$ in $\mathcal N^ {2,\alpha}_\lambda(S)$ with barycenter in $U$.
\end{theorem}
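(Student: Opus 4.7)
The plan is to reduce the statement, via Theorem \ref{AKTheo2CMC}, to finding critical points of the reduced functional $\bar A(\cdot,\lambda)$ on $S$ near $p$, and to produce such points by applying the implicit function theorem to the rescaled equation $\lambda^{-3}\nabla_a\bar A(a,\lambda)=0$. The expansion \eqref{AKExpansionCMC} supplies the leading behaviour, and the nondegeneracy of $p$ as a critical point of $H^S$ will provide invertibility of the linearization. Uniqueness of the resulting curve of critical points in a neighborhood of $(p,0)$ then transfers, through Theorem \ref{AKTheo2CMC}, to the uniqueness claim for critical points of $A$ with barycenter in $U$.

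To implement this, I would first parameterize a neighborhood of $p$ in $S$ by the chart $f^p:D_{r_0}\to S$ from Subsection \ref{construction}, so that $a$ near $p$ corresponds to $x\in D_{r_0}\subset\R^2$ and $p$ to $x=0$. Writing
$$\bar A(x,\lambda)=\lambda^2\Bigl(2\pi-\tfrac{\pi}{4}H^S\bigl(f^p(x)\bigr)\lambda+\lambda^2 R(x,\lambda)\Bigr),$$
I would upgrade the pointwise remainder $\mathcal O(\lambda^2)$ from \eqref{AKExpansionCMC} to a uniform $C^2$-in-$x$ estimate $\|R(\cdot,\lambda)\|_{C^2(D_{r_1})}\le C$ for $\lambda\in[0,\lambda_0]$ and some $r_1>0$, and then define
$$G(x,\lambda):=\lambda^{-3}\nabla_x\bar A(x,\lambda)=-\tfrac{\pi}{4}\nabla_x\bigl(H^S\circ f^p\bigr)(x)+\lambda\,\tilde R(x,\lambda),$$
which extends as a $C^1$ map across $\lambda=0$. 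At $(x,\lambda)=(0,0)$ one has $G(0,0)=0$ since $\nabla H^S(p)=0$, while $D_xG(0,0)=-\tfrac{\pi}{4}\nabla^2(H^S\circ f^p)(0)$ is invertible by the assumed nondegeneracy. The implicit function theorem then yields $\lambda_0>0$ and a smooth curve $x=x(\lambda)$ on $[0,\lambda_0)$ with $x(0)=0$ and $G(x(\lambda),\lambda)=0$, unique in a neighborhood of $(0,0)$. Setting $\gamma(\lambda):=f^p(x(\lambda))$ and invoking Theorem \ref{AKTheo2CMC} yields the claimed curve of critical points, their smoothness in $\lambda$, and their local uniqueness. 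The analysis is entirely parallel to that underlying Theorem \ref{theorem3}.

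The main obstacle I anticipate is precisely the upgrade of \eqref{AKExpansionCMC} from a pointwise identity with an $\mathcal O(\lambda^2)$ error to a $C^2$-in-$x$ bound on the remainder that is uniform in $\lambda$, which is what makes $G$ and $D_xG$ continuous at $\lambda=0$ so that the implicit function theorem actually applies. In practice this requires differentiating the identity $\bar A(a,\lambda)=\lambda^2 A[u^{a,\lambda},\tilde g^{a,\lambda}]$ twice in $a$, expanding through the smooth dependence of $u^{a,\lambda}$ on $a$ provided by Theorem \ref{graphfunctionCMC}, using the estimate \eqref{uestimateCMC} to extract the two factors of $\lambda$, and tracking the smooth dependence of $a\mapsto\tilde g^{a,\lambda}$ via $F^{a,\lambda}$. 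This is tedious but conceptually routine, and is essentially the same step that is carried out in the Willmore case in \cite{AK}.
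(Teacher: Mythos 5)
Your approach is essentially the one the paper intends (the paper itself says only ``we can follow the proof of Theorem 3 in \cite{AK}''), and your reduction via Theorem \ref{AKTheo2CMC} together with the rescaling $G(x,\lambda)=\lambda^{-3}\nabla_x\bar A(x,\lambda)$ is exactly the right rescaled equation. Two remarks, though. First, the obstacle you flag as the main difficulty --- upgrading the $\mathcal O(\lambda^2)$ remainder to a uniform $C^2$-in-$x$ bound --- is in fact automatic and needs none of the tedious re-differentiation of \eqref{AKExpansionCMC} you describe: by Theorem \ref{graphfunctionCMC} the map $(a,\lambda)\mapsto\hat A(a,\lambda):=A[u^{a,\lambda},\tilde g^{a,\lambda}]=\lambda^{-2}\bar A(a,\lambda)$ is jointly $C^\infty$, with $\hat A(a,0)\equiv 2\pi$ constant, hence $\nabla_a\hat A(a,0)\equiv 0$ and
$$G(a,\lambda)=\tfrac1\lambda\nabla_a\hat A(a,\lambda)=\int_0^1\bigl(\partial_\lambda\nabla_a\hat A\bigr)(a,s\lambda)\,ds$$
is $C^\infty$ across $\lambda=0$. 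This is precisely the integral-representation device the paper uses in the proof of Lemma~\ref{velocitylemmaWillmore}, and it replaces all the explicit bookkeeping you anticipate. Second, and related: you only assert that $G$ extends $C^1$ across $\lambda=0$, which via the implicit function theorem would only deliver a $C^1$ curve $\gamma$, whereas the theorem claims a smooth one; the integral representation above gives full $C^\infty$ regularity of $G$ and therefore of $\gamma$, closing that gap at no extra cost.
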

This result has already been established by Bellettini and Fusco in \cite{bellettinifusco}. See Remark 1.4 and Corollary 5.6 in their article. 

\subsection{Key Difference to the Riemannian Setting}\label{RiemannianSetting}
The analysis for constructing critical points to the area or the Willmore functional can be adapted to `\emph{almost spheres}' in a Riemannian manifold $M$. For the Willmore functional, this is, for example, presented in Mattuschkas' Ph.D. -thesis \cite{Mattuschka}. Instead of $F^{a,\lambda}$, we now consider  
\begin{equation}\label{RiemCaseParam}
\mathcal F^{a,\lambda}:\bar B_2(0)\rightarrow M,\ \mathcal F^{a,\lambda}(x):=\exp_a(\lambda x_\mu b_\mu)\hspace{.5cm}\textrm{$b_\mu$: any orthonormal frame of $T_aM$}.
\end{equation}
For a $C^{4,\alpha}$-small graph function $u:\Sp^2\rightarrow\R$, we then consider an \emph{almost sphere} 
\begin{equation}\label{ExpAlmostSphere}
\mathcal F^{a,\lambda}\circ f_u\hspace{.5cm}\textrm{where}\hspace{.5cm}f_u:\Sp^2\rightarrow\R^3,\ f_u(\omega):=(1+u(\omega))\omega.
\end{equation}
In analogy to the class $\mathcal M^{4,\alpha}_\lambda(S)$, we define 
$$
\mathcal M^{4,\alpha}_\lambda(M):=\set{
\phi\textrm{ of the form }\eqref{ExpAlmostSphere}\ |\ A[\phi]=4\pi\lambda^2
}.
$$
The analog of Theorem \ref{graphfunction} -- that is, the existence of critical points with prescribed barycenter -- is established as Theorem 1.0.1 in  \cite{Mattuschka}. Denoting the critical points of area $4\pi\lambda^2$ with prescribed barycenter $a$ as $\Phi^{a,\lambda}$ and the scalar curvature of $M$ by $\operatorname{Sc}$, the Willmore energy has the following expansion:
\begin{equation}\label{mondinoexp}
    \mathcal W[\Phi^{a,\lambda}]=4\pi-\frac{2\pi}3\lambda^2 \operatorname{Sc}(a)+0\cdot\lambda^3+\mathcal O(\lambda^4)
\end{equation}
This is not shown in \cite{Mattuschka}, but by Ikoma, Malchiodi and Mondino in \cite{malchiodispheres} -- though with a different approach than the one we described in Subsection \ref{construction}. Based on this expansion, the following analog of Theorem \ref{theorem3} can be established:
\begin{theorem}\label{RiemCurveTheorem}
    Let $p\in M$ be a nondegenerate critical point of $\operatorname{Sc}$. There exists $\lambda_0(M)>0$, a neighbourhood $U$ of $p$ and a smooth curve $\gamma:[0,\lambda_0)\rightarrow M$ such that $\gamma(0)=p$ and each $\Phi^{\gamma(\lambda),\lambda}$ is a critical point of $\mathcal W$ in $\mathcal M_\lambda^{4,\alpha}(M)$. Additionally, for $\lambda<\lambda_0$, these are the only critical points of $\mathcal W$ in $\mathcal M^{4,\alpha}_\lambda(M)$ with barycenter in $U$. 
\end{theorem}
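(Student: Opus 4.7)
The plan is to mirror the proof of Theorem \ref{theorem3} in the Riemannian setting, reducing the search for critical surfaces in $\mathcal M_\lambda^{4,\alpha}(M)$ with barycenter near $p$ to finding critical points of the finite-dimensional reduced functional $\bar{\mathcal W}(a,\lambda) := \mathcal W[\Phi^{a,\lambda}]$ on $M$. The Riemannian analog of Theorem \ref{AKTheo2}, established in \cite{Mattuschka} by the same Lagrange-multiplier / constraint argument, states that $\Phi^{a,\lambda}$ solves the unconstrained Willmore-type problem if and only if $a$ is critical for $\bar{\mathcal W}(\cdot,\lambda)$. Thus it suffices to produce a smooth curve $\gamma:[0,\lambda_0)\rightarrow M$ of such critical points with $\gamma(0)=p$.

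I then work in normal coordinates on $M$ centered at $p$, identifying a neighborhood of $p$ with a subset of $\R^3\cong T_pM$, and exploit the vanishing of the $\lambda^1$ coefficient in the expansion \eqref{mondinoexp} by defining
$$\Psi(a,\lambda) := \begin{cases} \lambda^{-2}\,\nabla_a \bar{\mathcal W}(a,\lambda), & \lambda>0,\\[2pt] -\tfrac{2\pi}{3}\,\nabla\operatorname{Sc}(a), & \lambda=0.\end{cases}$$
Granting that $\Psi$ extends smoothly across $\lambda=0$ (see below), one has $\Psi(p,0)=0$ by assumption while $D_a\Psi(p,0) = -\tfrac{2\pi}{3}\,\nabla^2\operatorname{Sc}(p)$ is invertible by the nondegeneracy of $p$. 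The implicit function theorem then produces $\lambda_0>0$, a neighborhood $U$ of $p$, and a unique smooth curve $\gamma:[0,\lambda_0)\rightarrow M$ with $\gamma(0)=p$ solving $\Psi(\gamma(\lambda),\lambda)=0$. For $\lambda\in(0,\lambda_0)$ this is equivalent to $\nabla_a\bar{\mathcal W}(\gamma(\lambda),\lambda)=0$, making $\Phi^{\gamma(\lambda),\lambda}$ critical in $\mathcal M_\lambda^{4,\alpha}(M)$, and the local uniqueness clause of the implicit function theorem — after shrinking $U$ if necessary — delivers the asserted uniqueness of critical surfaces with barycenter in~$U$.

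The main obstacle I anticipate is verifying that $\Psi$ extends \emph{smoothly} — not merely continuously — to $\lambda=0$. Concretely, one needs that $\bar{\mathcal W}(a,\lambda)$ admits a smooth expansion in $\lambda$ whose coefficients depend smoothly on $a$, with a remainder whose mixed partial derivatives in $a$ and $\lambda$ satisfy the expected $\mathcal O(\lambda^{4-k})$ bounds after $k$ $\lambda$-derivatives. This in turn rests on the joint smoothness of the map $(a,\lambda)\mapsto(u^{a,\lambda},\alpha^{a,\lambda},\beta_i^{a,\lambda})$ provided by the Riemannian analog of Theorem \ref{graphfunction} (Theorem 1.0.1 in \cite{Mattuschka}), together with a careful differentiation of $\mathcal W[u^{a,\lambda},\tilde g^{a,\lambda}]$ in $\lambda$. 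It is precisely the \emph{vanishing} of the $\lambda^1$ coefficient in \eqref{mondinoexp} — reflecting that the Willmore energy of a round sphere in a Riemannian manifold deviates from $4\pi$ only at second order — that makes division by $\lambda^2$ possible and thus permits the implicit function theorem to be invoked at $\lambda=0$ rather than only on $(0,\lambda_0)$.
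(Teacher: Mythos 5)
Your proposal is correct and follows essentially the same Lyapunov--Schmidt-plus-implicit-function-theorem strategy that the paper uses for Theorem \ref{theorem3} (and that it outlines in the proof of Lemma \ref{velocitylemmaWillmore}), adapted to the Riemannian setting by normalizing with $\lambda^{-2}$ rather than $\lambda^{-1}$ because the $\lambda^1$ coefficient in \eqref{mondinoexp} vanishes. The smoothness of $\Psi$ across $\lambda=0$ that you flag as the main obstacle is handled exactly as in Lemma \ref{velocitylemmaWillmore}: joint smoothness of $\bar{\mathcal W}$ from the Riemannian analog of Theorem \ref{graphfunction} plus the conditions $\bar{\mathcal W}(a,0)\equiv 4\pi$ and $\partial_\lambda\bar{\mathcal W}(a,0)\equiv 0$ give, via Taylor's formula with integral remainder, $\Psi(a,\lambda)=\int_0^1(1-s)\,\partial_\lambda^2\nabla_a\bar{\mathcal W}(a,\lambda s)\,ds$, which is manifestly smooth.
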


The fact that the surfaces $\Phi^{a,\lambda}$ yield a foliation of a neighborhood of a nondegenerate critical point $p$ of $\operatorname{Sc}$ is derived from the fact that 
\begin{equation}\label{RiemCaseDerivativeCurve}
\dot\gamma(0)=0.
\end{equation}
This fact is intimately linked to the fact that the $\lambda^3$-coefficient in Equation \eqref{mondinoexp} vanishes. We explain this connection in the proof of Theorem \ref{theoremWillmore} or, more concretely, in the proof of Lemma \ref{velocitylemmaWillmore}. To establish Equation \eqref{RiemCaseDerivativeCurve}, the authors of \cite{malchiodispheres} exploit certain symmetries in the expansion of various geometrical quantities. For details, we point to Lemma 5.1 in \cite{malchiodispheres}.\\

We now give an alternative argument to establish \eqref{RiemCaseDerivativeCurve}. The main point is to investigate how the surface $\Phi^{a,\lambda}$ changes when, instead of the orthonormal frame $b_\mu$, we use another frame $b_\mu'$, related to the old frame via $b_\mu'=T_{\mu\nu}b_\nu$ where $T\in \mathbb O(3)$. Denoting the surface construed with this new frame by $\Phi^{T,a,\lambda}$ and following the arguments of Lemma 10 in \cite{AK} it is relatively easy to prove that 
$$\Phi^{T,a,\lambda}=\Phi^{a,\lambda}\circ T\hspace{.5cm}\textrm{where we also use $T$ as a map $T:\Sp^2\rightarrow\Sp^2$}.$$
That is, $\Phi^{a,\lambda}$ changes only by a reparameterization so that $\Phi^{a,\lambda}$ and $\Phi^{T,a,\lambda}$ describe the same geometrical surface. Next, we recall that for given $a\in M$, the surface $\Phi^{a,\lambda}$ is constructed by the implicit function theorem for all $\lambda\in[0,\lambda_0)$. Restricting to $\lambda\geq 0$ is, however, not necessary and in fact, the surfaces $\Phi^{a,\lambda}$ can be constructed for $\lambda\in(-\lambda_0,\lambda_0)$. Note that changing $\lambda\rightarrow-\lambda$ in Equations \eqref{RiemCaseParam} and \eqref{ExpAlmostSphere} is equivalent to changing the frame by the matrix $-\operatorname{Id}_3$. So
\begin{equation}\label{NegativeLambdaInterpretation}
\Phi^{a,-\lambda}=\Phi^{-\operatorname{Id}_3, a,\lambda}=\Phi^{a,\lambda}\circ(-\operatorname{Id}_3).
\end{equation}
Exploiting the invariance of the Willmore energy under reparameterizations, we obtain 
\begin{equation}\label{WillmoreEnergyEvenRiem}
\bar{\mathcal W}(a,-\lambda)=\mathcal W[\Phi^{a,-\lambda}]=\mathcal W[\Phi^{a,\lambda}\circ(-\operatorname{Id}_3)]=\mathcal W[\Phi^{a,\lambda}]=\bar{\mathcal W}(a,\lambda).
\end{equation}
This identity already ensures the $\lambda^3$-coefficient in Equation \eqref{mondinoexp} to vanish. However, we can deduce Equation \eqref{RiemCaseDerivativeCurve} without resorting to the expansion from Equation \eqref{mondinoexp}. To do so, we first note that also Theorem \ref{RiemCurveTheorem} can be extended to allow for $\lambda\in(-\lambda_0,\lambda_0)$ and then provides a curve $\gamma\in C^\infty((-\lambda_0,\lambda_0), M)$ such that the only critical points of $\mathcal W$ inside $\mathcal M^{4,\alpha}_\lambda(M)$ with barycenter in $U$ are $\Phi^{\gamma(\lambda),\lambda}$. Using Equation \eqref{NegativeLambdaInterpretation} we get that for $\lambda>0$
$$\Phi^{\gamma(-\lambda),-\lambda}=\Phi^{\gamma(-\lambda),\lambda}\circ(-\operatorname{Id}_3).$$
So $\Phi^{\gamma(-\lambda),\lambda}$ is critical. By the uniqueness statement in Theorem \ref{RiemCurveTheorem}, it follows that there exists a reparameterization $\kappa$ of $\Sp^2$ such that $\Phi^{\gamma(-\lambda),\lambda}=\Phi^{\gamma(\lambda),\lambda}\circ\kappa$ and hence by the invariance of the Riemannian barycenter under reparameterization
$$\gamma(-\lambda)=C[\Phi^{\gamma(-\lambda),\lambda}]=C[\Phi^{\gamma(\lambda),\lambda}\circ\kappa]=\gamma(\lambda)$$
from which Equation \eqref{RiemCaseDerivativeCurve} follows immediately.\\

\paragraph{Non Applicability in the Domain Setting}\ \\
The argument outlined above fails to apply for half-spheres on the boundary $S$ of a smooth bounded domain $\Omega$. Recalling that almost half-spheres are of the form 
$$
F^{a,\lambda}[(1+u(\omega))\omega]=a+\lambda(1+u(\omega))(\omega_i b_i+\omega_3 N^S(a))+\varphi^a[\lambda(1+u(\omega))\omega_i e_i]N^S(a)
$$
we see that changing $\lambda\rightarrow-\lambda$ flips not only the chosen frame $b_i$ but also the normal vector $N^S(a)$ -- thereby changing it from the interior to the exterior normal. This results in the surface going from sitting \emph{inside} the domain $\Omega$ to sitting \emph{outside} of it.\\

Hence, allowing for negative $\lambda$ and using symmetry arguments fails in the present setting. An artifact of this is already visible in the expansion \eqref{AKExpansion} that contains terms with even and odd powers in $\lambda$ -- something that would be out ruled by an identity like \eqref{WillmoreEnergyEvenRiem}. In fact, we establish 
$$
\bar{\mathcal W}(a,\lambda)=2\pi-\pi H^S(a)\lambda+\frac12\pi\left(K^S(a)+\left(\ln(2)-\frac32\right)H^S(a)^2\right)\lambda^2+\mathcal O(\lambda^3).
$$
The fact that the $\lambda^2$-coefficient does not vanish essentially implies Theorem \ref{theoremWillmore}.
\setcounter{equation}{0}
\section{Proof of the Main Theorem}\label{mainTheoremSection}
In the following, let $K^S$ denote the Gauß curvature of $S$, $N^S$ the interior pointing normal field along $S$ and $H^S:=\vec H^S\cdot N^S$ denote the scalar mean curvature of $S$. The proof of Theorems \ref{theoremWillmore} and \ref{theoremCMC} is based on the following Lemmas:
\begin{lemma}[Willmore Energy Expansion]\label{eneergyexplemmaWillmore}
Let $a\in S$, $\lambda\in[0,\lambda_0)$ and consider the critical points $\phi^{a,\lambda}$ from Equation \eqref{phiplambdaWillmore}. Then
\begin{equation}\label{energyexpansionWillmore} 
\mathcal W[\phi^{a,\lambda}]=2\pi-\lambda\pi H^S(a)+\frac12\pi\left(K^S(a)+\left(\ln(2)-\frac32\right)  H^S(a)^2\right)\lambda^2+\mathcal O(\lambda^3).
\end{equation}
\end{lemma}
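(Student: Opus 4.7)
Using the scale-invariance of the Willmore functional (under $\phi\mapsto\lambda\phi$, $H\mapsto\lambda^{-1}H$ and $d\mu\mapsto\lambda^2 d\mu$), we have $\mathcal W[\phi^{a,\lambda}]=\mathcal W[f_{u^{a,\lambda}},\tilde g^{a,\lambda}]$, and the plan is to Taylor-expand the right-hand side in $\lambda$ to second order. The scaled-pullback metric \eqref{metricdefinition} admits a smooth expansion $\tilde g^{a,\lambda}=\delta+\lambda\,g^{(1)}+\lambda^2 g^{(2)}+\mathcal O(\lambda^3)$, whose coefficients are polynomials in the second fundamental form of $S$ at $a$ and its first derivative. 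Since $(u,\alpha,\beta)=(0,0,0)$ is the unique solution of the constrained system \eqref{pullbackeq2} for the euclidean metric, Theorem \ref{graphfunction} provides smooth expansions $u^{a,\lambda}=\lambda u_1+\lambda^2 u_2+\mathcal O(\lambda^3)$, $\alpha^{a,\lambda}=\lambda\alpha_1+\mathcal O(\lambda^2)$, and $\beta_i^{a,\lambda}=\lambda\beta_{i,1}+\mathcal O(\lambda^2)$, with $u_1, u_2$ depending smoothly on $a$.

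To determine $u_1$ (together with the first-order Lagrange multipliers $\alpha_1$, $\beta_{i,1}$), I would differentiate \eqref{pullbackeq2} in $\lambda$ at $\lambda=0$. At the round half-sphere in euclidean space one has $W=0$, $H=2$, $B=0$, so no zeroth-order Lagrange-multiplier terms survive. The derivative then yields a linear elliptic boundary value problem of the form $Lu_1 = \mathcal R_1[g^{(1)}] + 2\alpha_1 + \beta_{i,1}\nabla C^i[0,\delta]$, coupled with the linearized boundary operator $DB[0,\delta]u_1=\mathcal B_1[g^{(1)}]$ and the constraints $\int_{\Sp^2_+}u_1\,d\mu=0$ and $C[u_1,\delta]=0$, where $L$ is the linearization of the scalar Willmore operator about the round half-sphere. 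Invertibility of this whole system modulo the kernel generated by translations and dilations is precisely the content of Lemma 6 in \cite{AK}. Decomposing $\mathcal R_1$ and $\mathcal B_1$ into spherical harmonics and inverting $L$ mode-by-mode yields $u_1$ in closed form, together with $\alpha_1$ and $\beta_{i,1}$ being fixed by the solvability conditions for the translation and dilation modes.

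With $u_1$ in hand, one expands $F(\lambda):=\mathcal W[f_{u^{a,\lambda}},\tilde g^{a,\lambda}]$ as $F(0)+\lambda F'(0)+\tfrac{\lambda^2}{2}F''(0)+\mathcal O(\lambda^3)$. The constant $F(0)=2\pi$ is the Willmore energy of the unit half-sphere in euclidean space. In $F'(0)$, the contribution of $u_1$ reduces, after using $W[0,\delta]=0$ in the bulk and absorbing the boundary terms through the linearized constraints, to the pure metric-derivative $D_2\mathcal W[0,\delta]g^{(1)}$, which recovers the coefficient $-\pi H^S(a)$ in agreement with \eqref{AKExpansion}. A parallel argument shows that the coefficient of $u_2$ in $F''(0)$ also vanishes, so $u_2$ need not be computed: what remains is an explicit integral over $\Sp^2_+$ of a polynomial in $u_1$, $g^{(1)}$, $g^{(2)}$ and their derivatives, and the cross-products thereof.

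The chief obstacle is the bookkeeping of this last integral. The integrands are explicit trigonometric expressions in spherical coordinates whose coefficients combine several curvature quantities of $S$ at $a$, and the spherical-harmonic decomposition of $u_1$ produces a $(1+\cos\theta)^{-1}$-type factor from the inversion of $L$ against the $\ell=1$ modes of $\mathcal R_1$ — this is the origin of the $\ln 2$ in the final coefficient. The number of cross-terms is large, and it is at this stage that the machine calculation carried out in the author's Mathematica notebooks becomes indispensable. Collecting the results, re-expressing everything through the intrinsic invariants at $a$ via $K^S=\tfrac12((H^S)^2-|h^S|^2)$, and verifying that the $\nabla H^S(a)$- and off-diagonal second-fundamental-form contributions cancel, then yields the claimed coefficient $\tfrac12\pi(K^S(a)+(\ln 2-\tfrac32)H^S(a)^2)$.
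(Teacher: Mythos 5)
The proposal goes off the rails precisely where you dismiss the contribution of $u_2=u''(0)$. You argue by analogy with the first-order step that $D_1\mathcal W[0,\delta]u''(0)$ vanishes, and hence that $u_2$ plays no role. Both the first-order claim and the analogy are wrong. At first order, the two pieces are exactly the reverse of what you say: a direct computation using \eqref{WillEnDerq} together with the trace identities $\tr_{\Sp^2}\nabla_\omega q=\tr_{\Sp^2}q=-q(\omega,\omega)$, $\tr_{\Sp^2}\nabla_\cdot q(\cdot,\omega)=\omega_3H^S(a)-q(\omega,\omega)$ gives $D_2\mathcal W[0,\delta]\tilde g'(0)=0$, while the entire coefficient $-\pi H^S(a)$ comes from $D_1\mathcal W[0,\delta]u'(0)=\int_{\partial\Sp^2_+}\partial_\eta u'(0)\,dS=-\pi H^S(a)$, which survives because $D_1\mathcal W[0,\delta]$ is a nonzero boundary functional (its interior part, $-\int\Delta\varphi$, converts to a boundary flux; it is not annihilated by ``$W[0,\delta]=0$ in the bulk''). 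Thus there is no ``absorption into the metric derivative'' to serve as a template for a parallel second-order argument.

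Consequently the $u''(0)$ contribution to $F''(0)$ does not vanish. The paper handles it (Appendix B.6) by exploiting the structure of $D_1\mathcal W[0,\delta]$ as a boundary term: one does not need the function $u''(0)$ itself, only $\int_{\partial\Sp^2_+}\partial_\eta u''(0)\,dS$, which is extracted by twice differentiating the boundary condition $B_1[u(\lambda),\tilde g(\lambda)]=0$ in $\lambda$ and inserting the explicit $u'(0)$ and $\tilde g'(0)$. The result is $D_1\mathcal W[0,\delta]u''(0)=4\pi(\ln 2-1)H^S(a)^2$, which is one of the two sources of $\ln 2$ in the final coefficient; dropping it (as your proposal would) produces a different, incorrect $\lambda^2$-coefficient. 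So the claim ``$u_2$ need not be computed'' is defensible in the weak sense that its explicit functional form is not needed, but not in the sense you intend — the boundary data of $u_2$ enter and are nontrivial, and your argument for why they should vanish is based on a false picture of the first-order computation.
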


\begin{lemma}[Area Expansion]\label{eneergyexplemmaCMC}
Let $a\in S$, $\lambda\in[0,\lambda_0)$ and consider the critical points $\phi^{a,\lambda}$ from Equation \eqref{phiplambdaCMC}. Then
\begin{equation}\label{energyexpansionCMC} 
\mathcal A[\phi^{a,\lambda}]=\lambda^2\left(2\pi-\lambda\frac\pi4 H^S(a)+\frac12\pi\left(\frac16K^S(a) -\frac{35}{192} H^S(a)^2\right)\lambda^2+\mathcal O(\lambda^3)\right).
\end{equation}
\end{lemma}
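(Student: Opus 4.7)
The plan is to obtain the expansion of $\mathcal A[\phi^{a,\lambda}]=\lambda^2 A[u^{a,\lambda},\tilde g^{a,\lambda}]$ by Taylor-expanding both $u^{a,\lambda}$ and the rescaled pull-back metric $\tilde g^{a,\lambda}$ in $\lambda$ around $\lambda=0$. By Theorem \ref{graphfunctionCMC}, $u^{a,\lambda}$ is smooth in $\lambda$ with $u^{a,0}=0$, so we can write $u^{a,\lambda}=\lambda u_1+\lambda^2 u_2+O(\lambda^3)$. Since $\tilde g^{a,\lambda}=\lambda^{-2}(F^{a,\lambda})^{*}\delta$ is assembled from the Taylor expansion of $\varphi^a$ at the origin, we similarly have $\tilde g^{a,\lambda}=\delta+\lambda g^{(1)}+\lambda^2 g^{(2)}+O(\lambda^3)$, whose coefficients are explicit polynomials in the second fundamental form of $S$ and its first derivatives at $a$, hence expressible through $H^S(a)$, $K^S(a)$ and the related second-order geometric data.

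The central step is to determine $u_1$ and $u_2$ explicitly. Differentiating the constrained CMC system \eqref{CMCEquationPullbackConstrained} once and twice in $\lambda$ at $\lambda=0$ produces two linear, inhomogeneous boundary value problems on $\Sp^2_+$ whose common principal part is the Jacobi operator $L=\Delta+2$ on $\Sp^2_+$ with the Neumann-type boundary condition coming from linearizing $B_1$. Its kernel is spanned by $\omega_1,\omega_2$ (horizontal translations of the half-sphere), and these two directions are precisely compensated by the barycenter constraints $C^i[u,\tilde g]=0$, the Lagrange multipliers $\beta_i^{a,\lambda}$ being fixed at each order by the resulting solvability conditions; the volume constraint $V[u,\tilde g]=\tfrac{2\pi}{3}$ analogously pins down $\alpha^{a,\lambda}$. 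Decomposing the right-hand sides in spherical harmonics on $\Sp^2_+$ then yields closed-form expressions for $u_1$ and $u_2$, and the even/odd terminology from Subsection \ref{terminilogy} rules out many Fourier modes a priori.

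The expansion is assembled by substituting into $A[u,\tilde g]$ and Taylor-expanding, combining first and mixed second variations of $A$ at the reference $(0,\delta)$. The linear-in-$u$ contribution simplifies via $\nabla A[0,\delta]=-H[0,\delta]=-2$ together with the volume constraint, and the remaining contributions reduce to explicit integrals of products of low-order spherical harmonics over $\Sp^2_+$. The $\lambda^0$ and $\lambda^1$ coefficients reproduce Equation \eqref{AKExpansionCMC}, while the new content is the $\lambda^2$ coefficient $\tfrac12\pi\bigl(\tfrac16 K^S(a)-\tfrac{35}{192}H^S(a)^2\bigr)$.

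The main obstacle is the sheer volume of bookkeeping: the mean curvature, the boundary operator $B_1$, the volume $V$, the barycenter components $C^i$ and the area $A$ must all be expanded jointly to sufficient order in both $u$ and the non-Euclidean metric $\tilde g^{a,\lambda}$, and the Neumann-type boundary conditions must be carried consistently through the linearizations. As announced in the introduction, the explicit algebra is best delegated to Mathematica; what remains to verify by hand is the conceptual skeleton — the correct linearized PDEs, the removal of the kernel of $L$ by the two barycenter constraints, and the final spherical-harmonic integrations.
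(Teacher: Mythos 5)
Your proposal follows essentially the same route as the paper: Taylor-expand $u^{a,\lambda}$ and $\tilde g^{a,\lambda}$ in $\lambda$, derive the linearized constrained CMC system for the first-order coefficient (with the kernel of $\Delta+2$ under the Neumann-type condition spanned by $\omega_1,\omega_2$ and removed by the barycenter constraints, and $\alpha'$, $\beta_i'$ fixed by solvability), solve it in closed form via a spherical-harmonic ansatz, and assemble the $\lambda^2$-coefficient of $A$ from the first and second variations of $A$, $V$, $B_1$ and $H$ at $(0,\delta)$, with the algebra delegated to Mathematica. The one deviation is that you propose to compute $u_2$ explicitly from the second-order linearized BVP, whereas the paper never solves for $u''(0)$ at all: differentiating the volume constraint twice already gives $\int_{\Sp^2_+}u''(0)\,d\mu_{\Sp^2}=-2\int_{\Sp^2_+}u'(0)^2\,d\mu_{\Sp^2}$, which is the only information about $u''(0)$ entering $D_1A[0,\delta]u''(0)$ --- a shortcut worth adopting, since the second-order system's right-hand side is substantially messier.
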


Using Equations \eqref{energyexpansionWillmore} and \eqref{energyexpansionCMC} we can derive an explicit formula for $\gamma'(0)$ from Theorems \ref{theorem3} and \ref{theorem3CMC}. 

\begin{lemma}\label{velocitylemmaWillmore}
Let $p\in S$ be a nondegenerate critical point of $H^S$ and $\gamma$ as in Theorem \ref{theorem3}. Then 
$$\dot\gamma(0)=\frac12\left(\nabla^2 H^S(p)\right)^{-1}\nabla K^S(p).$$
\end{lemma}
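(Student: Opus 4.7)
The plan is to exploit the variational characterisation of $\gamma$ together with the energy expansion of Lemma \ref{eneergyexplemmaWillmore}. By Theorem \ref{AKTheo2} combined with Theorem \ref{theorem3}, the curve $\gamma$ is defined implicitly by
\begin{equation*}
\nabla_a \bar{\mathcal W}(\gamma(\lambda),\lambda)=0\qquad\text{for all }\lambda\in[0,\lambda_0).
\end{equation*}
The idea is simply to differentiate this identity at $\lambda=0$, using that Lemma \ref{eneergyexplemmaWillmore} provides sufficient control on $\bar{\mathcal W}$.

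First I would observe that the map $(a,\lambda)\mapsto\bar{\mathcal W}(a,\lambda)$ is smooth: this follows from the smoothness of $(a,\lambda)\mapsto u^{a,\lambda}$ guaranteed by Theorem \ref{graphfunction} together with the smooth dependence of $\tilde g^{a,\lambda}$ on $(a,\lambda)$. Consequently, the expansion of Lemma \ref{eneergyexplemmaWillmore} may be differentiated in $a$, yielding
\begin{equation*}
\nabla_a\bar{\mathcal W}(a,\lambda)=-\lambda\pi\nabla H^S(a)+\frac{\lambda^2\pi}{2}\Bigl(\nabla K^S(a)+(2\ln 2-3)H^S(a)\nabla H^S(a)\Bigr)+\mathcal O(\lambda^3),
\end{equation*}
with the remainder controlled in $C^0$ uniformly in $a$ near $p$.

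Next I would evaluate at $a=\gamma(\lambda)$, divide by $\lambda$, and Taylor expand $\nabla H^S(\gamma(\lambda))$ around $p$. Since $\gamma(0)=p$ and $\gamma$ is smooth, $\gamma(\lambda)-p=\lambda\dot\gamma(0)+\mathcal O(\lambda^2)$, and since $\nabla H^S(p)=0$ we obtain
\begin{equation*}
\nabla H^S(\gamma(\lambda))=\lambda\,\nabla^2 H^S(p)\,\dot\gamma(0)+\mathcal O(\lambda^2).
\end{equation*}
Substituting and using $\nabla H^S(p)=0$ to kill the $H^S\nabla H^S$ term at leading order, the critical point equation becomes
\begin{equation*}
0=-\pi\,\nabla^2 H^S(p)\,\dot\gamma(0)+\frac{\pi}{2}\nabla K^S(p)+\mathcal O(\lambda).
\end{equation*}
Letting $\lambda\to 0^+$ and inverting $\nabla^2 H^S(p)$, which is possible by the nondegeneracy of $p$ as a critical point of $H^S$, yields the claimed formula.

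The main (mild) obstacle is justifying that the $\mathcal O(\lambda^3)$ remainder in Lemma \ref{eneergyexplemmaWillmore} behaves well enough under $\nabla_a$ to contribute only $\mathcal O(\lambda^3)$ after differentiation; once one knows that the reduced functional is jointly smooth in $(a,\lambda)$, this is routine, but it is the point at which one must be careful to avoid a circular use of the expansion. Everything else is essentially an application of the implicit function theorem in its consequence that $\dot\gamma(0)$ is determined by the first two nontrivial coefficients of the expansion of $\nabla_a\bar{\mathcal W}$.
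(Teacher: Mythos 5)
Your proposal is correct and follows essentially the same route as the paper: both arguments reduce to differentiating the identity $\nabla_a\bar{\mathcal W}(\gamma(\lambda),\lambda)=0$ at $\lambda=0$ and reading off $\dot\gamma(0)$ from the $\lambda$- and $\lambda^2$-coefficients of the energy expansion, using $\nabla H^S(p)=0$ and the invertibility of $\nabla^2 H^S(p)$. The only presentational difference is that the paper handles the division by $\lambda$ (the ``mild obstacle'' you flag) by introducing $v(a,\lambda):=\lambda^{-1}\nabla_1\bar{\mathcal W}(a,\lambda)$ and exhibiting its smoothness through the integral representation $v(a,\lambda)=\int_0^1(\partial_2\nabla_1\bar{\mathcal W})(a,\lambda s)\,ds$, which is exactly the joint-smoothness argument you sketch.
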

\begin{proof}
Inspecting the proof of Theorem \ref{theorem3} (Theorem 3 in \cite{AK}) shows the existence of $\gamma$ is derived by studying  
\begin{equation}\label{definitionofv}
    v(a,\lambda):=\left\{\begin{aligned}
 \frac1\lambda\nabla_1 \bar{\mathcal W}[a,\lambda]\hspace{.8cm} &,\ \textrm{for $\lambda\neq 0$},\\
\frac{\partial}{\partial\lambda}\bigg|_{\lambda=0}\nabla_1 \mathcal W[a,\lambda]&,\ \textrm{for $\lambda=0$}.
\end{aligned}\right.
\end{equation}
We recall $\bar{\mathcal W}[a,\lambda]:=\mathcal W[\phi^ {a,\lambda}]$ is a smooth function satisfying $\bar{\mathcal W}[a,0]\equiv 2\pi$. Hence $\nabla_1 \bar {\mathcal W}[a,0]\equiv 0$ and we can write 
$$v(a,\lambda)=\frac1\lambda(\nabla_1\bar{\mathcal W}[a,\lambda]-\nabla_1\bar{\mathcal W}[a,0])=\int_0^1(\partial_2 \nabla_1\bar{\mathcal W})(a,\lambda s)ds.$$
So $v$ is smooth and, by using Equation \eqref{energyexpansionWillmore}, we get $v(a,0)=-\pi\nabla H^  S(a)$ and thus $\nabla_1 v(p,0)=-\pi\nabla_1^2 H^  S(p)$. Clearly $v(p,0)=0$ and, since $p$ in nondegenerate, $\nabla_1 v(p,0)$ is invertible. The implicit function theorem gives a $\lambda_0>0$, a neighbourhood $U$ of $p$ and a $C^{\infty}$ curve $\gamma:[0,\lambda_0)\rightarrow S$ satisfying $\gamma(0)=p$ and
$$v(a,\lambda)=0\hspace{.2cm}\textrm{for $\lambda<\lambda_0$ and $a\in U$}\hspace{.5cm}\Leftrightarrow\hspace{.5cm}a=\gamma(\lambda).$$ 
In particular, this establishes Theorem \ref{theorem3}. By construction $v(\gamma(\lambda),\lambda)=0$. Differentiating with respect to $\lambda$ at $\lambda=0$ and using Equation \eqref{energyexpansionWillmore} shows 
$$\pi\nabla^2 H^ S(\gamma(0))\dot\gamma(0)=-\nabla_1 v(\gamma(0),0)\dot\gamma(0)=\frac{\partial v(p,\lambda)}{\partial\lambda}\bigg|_{\lambda=0}=\frac{\pi}{2}\nabla K^ S(p).$$
\end{proof}

Following the same argument, we establish the following analog Lemma for the CMC case.
\begin{lemma}\label{velocitylemmaCMC}
Let $p\in S$ be a nondegenerate point of $H^S$ and $\gamma$ as in Theorem \ref{theorem3CMC}. Then 
$$\dot\gamma(0)=\frac13\left(\nabla ^2 H^S(p)\right)^{-1}\nabla K^S(p).$$
\end{lemma}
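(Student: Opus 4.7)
The plan is to mirror the proof of Lemma \ref{velocitylemmaWillmore}, swapping the Willmore energy $\bar{\mathcal W}$ for the reduced area functional $\bar A(a,\lambda):=A[\phi^{a,\lambda}]$ and accounting for the extra powers of $\lambda$ that appear in the area expansion \eqref{energyexpansionCMC}. The key observation is that the expansion in Lemma \ref{eneergyexplemmaCMC} reads
\[
\bar A(a,\lambda)=2\pi\lambda^2-\tfrac\pi4 H^S(a)\lambda^3+\tfrac12\pi\left(\tfrac16K^S(a)-\tfrac{35}{192}H^S(a)^2\right)\lambda^4+\mathcal O(\lambda^5),
\]
so that the $a$-dependent part of $\nabla_1\bar A(a,\lambda)$ vanishes to order $\lambda^3$ at $\lambda=0$. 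Because of this, the natural analog of the auxiliary function from Lemma \ref{velocitylemmaWillmore} is
\[
v(a,\lambda):=\begin{cases}\lambda^{-3}\,\nabla_1\bar A(a,\lambda),&\lambda\neq 0,\\[1mm]\tfrac16\partial_\lambda^3\nabla_1\bar A(a,\lambda)\big|_{\lambda=0},&\lambda=0,\end{cases}
\]
which, by the same integral-remainder argument as in the Willmore case (writing $\nabla_1\bar A(a,\lambda)=\tfrac16\lambda^3\int_0^1(1-s)^2\partial_\lambda^3\nabla_1\bar A(a,\lambda s)\,ds$ modulo the fact that all lower $\lambda$-derivatives of $\nabla_1\bar A$ vanish at $\lambda=0$), is a smooth function on $S\times(-\lambda_0,\lambda_0)$.

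Inspecting the proof of Theorem \ref{theorem3CMC} shows that the curve $\gamma$ arises precisely as the zero set of $v$ near $(p,0)$, obtained by the implicit function theorem. Reading off the coefficients of $\bar A$ we compute
\[
v(a,0)=-\tfrac\pi4\nabla H^S(a),\qquad \partial_\lambda v(a,0)=\tfrac{\pi}{12}\nabla K^S(a)-\tfrac{35\pi}{192}H^S(a)\nabla H^S(a).
\]
Since $p$ is a critical point of $H^S$, we have $v(p,0)=0$ and $\nabla_1 v(p,0)=-\tfrac\pi4\nabla^2 H^S(p)$, which is invertible by nondegeneracy; moreover the term proportional to $\nabla H^S(p)$ in $\partial_\lambda v(p,0)$ drops out, leaving $\partial_\lambda v(p,0)=\tfrac{\pi}{12}\nabla K^S(p)$.

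Differentiating the defining identity $v(\gamma(\lambda),\lambda)\equiv 0$ at $\lambda=0$ then yields
\[
-\tfrac\pi4\nabla^2 H^S(p)\,\dot\gamma(0)+\tfrac\pi{12}\nabla K^S(p)=0,
\]
which rearranges to the claimed formula $\dot\gamma(0)=\tfrac13(\nabla^2 H^S(p))^{-1}\nabla K^S(p)$. I expect no genuine obstacle here: the only subtlety is keeping track of the correct power of $\lambda$ (namely $\lambda^{-3}$ rather than $\lambda^{-1}$) when defining $v$, and verifying that the $H^S(p)\nabla H^S(p)$ contribution in $\partial_\lambda v(p,0)$ vanishes so that the right-hand side is purely a multiple of $\nabla K^S(p)$.
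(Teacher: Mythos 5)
Your proposal is correct and follows essentially the same route the paper takes (the paper simply states that the CMC case "follows the same argument" as Lemma \ref{velocitylemmaWillmore}): you form the rescaled gradient $v$, check smoothness via the integral-remainder form of Taylor's theorem, apply the implicit function theorem, and differentiate $v(\gamma(\lambda),\lambda)\equiv 0$ at $\lambda=0$. The only cosmetic difference is that you work with $\bar A$ including the $\lambda^2$ prefactor and hence divide by $\lambda^3$, whereas one could equivalently work with the normalized area $A[u^{a,\lambda},\tilde g^{a,\lambda}]$ and divide by $\lambda$ exactly as in the Willmore case; your bookkeeping of the powers of $\lambda$ and the cancellation of the $H^S(p)\nabla H^S(p)$ term are both correct.
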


Lemmas \ref{velocitylemmaWillmore} and \ref{velocitylemmaCMC} essentially imply Theorems \ref{theoremWillmore} and \ref{theoremCMC}. We quickly give the intuitive argument. Consider the surfaces $\phi^{\gamma(\lambda),\lambda}$. Using the diffeomorphism $F^{p}$, we can pull these back to $\R^3_+$, which produces surfaces that are essentially of the form
$$\Phi^0_\lambda:\Sp^2_+\rightarrow\R^3_+,\ \Phi^0_\lambda(\omega):=\lambda v_0 e_1+\lambda\omega$$
where $v_0$ is given by Lemmas \ref{velocitylemmaWillmore} and \ref{velocitylemmaCMC} respectively. It is easy to see that for $v_0<1$, these are pairwise disjoint and that for $v_0>1$, this is not the case. This is perhaps best seen in a picture:
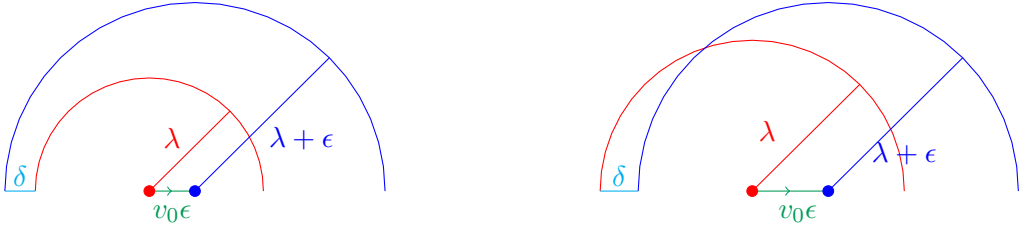
\begin{figure}[htbp]
\begin{minipage}{.5\textwidth}
\begin{center}
\begin{tikzpicture}
\draw [red, xshift=2.5cm, domain=0:180] plot(\x:1.5cm);
\draw [blue, xshift=3.1cm, domain=0:180] plot(\x:2.5cm);
\draw[blue] (3.1cm,0)--++(45:2.5cm);
\draw[red] (2.5,0)--++(45:1.5);
\draw[ForestGreen,->] (2.5,0)--(2.8,0);\draw[ForestGreen](2.8,0)--(3.1,0);
\draw[cyan] (.6,0)--(1,0);
\node at (4.5cm,0.7) {$\color{blue}\lambda+\epsilon$};
\node at (2.8,.7) {$\color{red}\lambda$};
\node at (2.8,-.3) {$\color{ForestGreen}v_0\epsilon$};
\node at (.8,.2) {$\color{cyan}\delta$};
\draw[red, xshift=2.5cm, fill] (0,0) circle (2pt);
\draw[blue, xshift=3.1cm, fill] (0,0) circle (2pt);
\end{tikzpicture}
\end{center}
\end{minipage}
\begin{minipage}{.5\textwidth}
\begin{center}
\begin{tikzpicture}
\draw [red, xshift=2.5cm, domain=0:180] plot(\x:2cm);
\draw [blue, xshift=3.5cm, domain=0:180] plot(\x:2.5cm);
\draw[blue] (3.5cm,0)--++(45:2.5cm);
\draw[red] (2.5,0)--++(45:2);
\draw[ForestGreen,->] (2.5,0)--(3,0);\draw[ForestGreen](3,0)--(3.5,0);
\draw[cyan] (.5,0)--(1,0);
\node at (4.5cm,0.5) {$\color{blue}\lambda+\epsilon$};
\node at (2.7,.8) {$\color{red}\lambda$};
\node at (3.1,-.3) {$\color{ForestGreen}v_0\epsilon$};
\node at (.75,.2) {$\color{cyan}\delta$};
\draw[red, xshift=2.5cm, fill] (0,0) circle (2pt);
\draw[blue, xshift=3.5cm, fill] (0,0) circle (2pt);
\end{tikzpicture}
\end{center}
\end{minipage}
\caption{Foliation property in unperturbed case}
\label{figure}
\end{figure}\ \\
The leftmost point of the blue circle is at $v_0(\lambda+\epsilon)-(\lambda+\epsilon)$ and the leftmost point of the red circle is as $\lambda v_0-\lambda$. hence, their signed distance is $\delta=\lambda-(\lambda+\epsilon)+v_0\epsilon=(v_0-1)\epsilon$. So, if $v_0>1$ (right picture), the circles move so fast to the right that circles of different radii intersect, while this does not happen for $v_0<1$ (left picture). In the following Theorem, we make this intuitive argument rigorous.

\begin{theorem}\label{maintheoremWillmore}
Let $\Omega$ be a smooth bounded domain, $p\in S=\partial\Omega$ be a nondegenerate critical point of $H^S$, $\phi^{\gamma(\lambda),\lambda}$ denote the surfaces of Willmore type from Theorem \ref{theorem3} and put $ v_0:=|\dot\gamma(0)|$.
\begin{enumerate}
    \item If $v_0<1$, there exists $\lambda_0(v)>0$ and a neighborhood $U$ of $p$ in $\bar\Omega$ such that the surfaces  $(\phi^{\gamma(\lambda),\lambda})_{0<\lambda<\lambda_0}$ provide a foliation of $U\backslash\set p$. 
    \item If $v_0>1$, then for no neighborhood $U$ of $p$ in $\bar\Omega$ and $\lambda_0>0$ it is true that the surfaces  $(\phi^{\gamma(\lambda),\lambda})_{0<\lambda<\lambda_0}$ provide a foliation of $U\backslash\set p$. 
\end{enumerate}
\end{theorem}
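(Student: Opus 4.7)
My strategy is to lift the family $(\phi^{\gamma(\lambda),\lambda})$ to $\R^3_+$ via the chart $F^p$ and perform a blow-up in $\lambda$, after which each surface becomes a perturbation of an explicit round half-sphere. Choose the frame $b_1,b_2$ at $p$ so that $\dot\gamma(0)=v_0 b_1$ and set
$$\Psi(\omega,\lambda):=(F^p)^{-1}\bigl(\phi^{\gamma(\lambda),\lambda}(\omega)\bigr)\in\R^3_+.$$
Using $\phi^{a,\lambda}(\omega)=F^a\bigl(\lambda(1+u^{a,\lambda}(\omega))\omega\bigr)$ together with $u^{a,0}\equiv 0$ and $\gamma(0)=p$, a Taylor expansion at $\lambda=0$ gives $\Psi(\omega,0)=0$ and $\partial_\lambda\Psi(\omega,0)=v_0 e_1+\omega$. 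Hence $\Psi(\omega,\lambda)=\lambda\,\tilde\Psi(\omega,\lambda)$ with $\tilde\Psi$ smooth on $\Sp^2_+\times[0,\lambda_0)$ and $\tilde\Psi(\omega,0)=v_0 e_1+\omega$. The foliation question for $(\phi^{\gamma(\lambda),\lambda})$ in $\bar\Omega$ is equivalent via $F^p$ to the corresponding question for $\Psi$ in $\R^3_+$.

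For Part (1), with $v_0<1$ the initial profile $\omega\mapsto v_0 e_1+\omega$ stays uniformly away from the origin. I would pass to polar coordinates on the target and consider
$$\Xi(\omega,\lambda):=\bigl(\tilde\Psi(\omega,\lambda)/|\tilde\Psi(\omega,\lambda)|,\ \lambda|\tilde\Psi(\omega,\lambda)|\bigr)\in\Sp^2_+\times[0,\infty).$$
A direct computation shows that $\Xi(\cdot,0)$ is a smooth diffeomorphism of $\Sp^2_+$ onto $\Sp^2_+\times\{0\}$, with explicit inverse $\xi\mapsto t(\xi)\xi-v_0 e_1$ for $t(\xi):=v_0\xi_1+\sqrt{1-v_0^2+v_0^2\xi_1^2}>0$. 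The differential $D\Xi(\omega,0)$ is block triangular with invertible diagonal blocks (the second being scalar multiplication by $|v_0 e_1+\omega|\geq 1-v_0>0$), hence invertible. The inverse function theorem combined with a compactness/contradiction argument (any two sequences of distinct preimages with $\lambda\to 0$ must converge to the same limit, contradicting local injectivity there) upgrades this to a global diffeomorphism from $\Sp^2_+\times[0,\lambda_0)$ onto a relatively open neighborhood of $\Sp^2_+\times\{0\}$ in $\Sp^2_+\times[0,\infty)$ for small enough $\lambda_0$. Pulling back through polar coordinates and $F^p$ exhibits the claimed foliation of a punctured neighborhood of $p$.

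For Part (2), when $v_0>1$, fix $t\in\bigl(1,(v_0+1)/(v_0-1)\bigr)$. The unperturbed half-spheres $\lambda(v_0 e_1+\Sp^2_+)$ and $t\lambda(v_0 e_1+\Sp^2_+)$ cross in a proper circle whose intersection with $\{x_3\geq 0\}$ is a nontrivial arc; at each arc point the two radial directions from the respective centers are not parallel (the intersection circle lies in a plane orthogonal to $e_1$ and centered on the $e_1$-axis, so off-axis arc points are not collinear with both sphere centers), yielding a transverse intersection. Fixing such an unperturbed intersection pair $(\omega_1^*,\omega_2^*)\in\Sp^2_+\times\Sp^2_+$ satisfying $v_0 e_1+\omega_1^*=t(v_0 e_1+\omega_2^*)$, the map
$$G(\omega_1,\omega_2,\lambda):=\tilde\Psi(\omega_1,\lambda)-t\,\tilde\Psi(\omega_2,t\lambda)\in\R^3$$
vanishes at $(\omega_1^*,\omega_2^*,0)$ and is full-rank $3$ in the variables $(\omega_1,\omega_2)$ by transversality (equivalently $T_{\omega_1^*}\Sp^2+T_{\omega_2^*}\Sp^2=\R^3$). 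The implicit function theorem with parameter $\lambda$ yields smooth curves $\omega_1(\lambda),\omega_2(\lambda)$ solving $G=0$ for small $\lambda>0$, giving genuine self-intersections $\Psi(\omega_1(\lambda),\lambda)=\Psi(\omega_2(\lambda),t\lambda)$ arbitrarily close to the origin. This contradicts the foliation property for any neighborhood $U$ of $p$.

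The main obstacle is the clean extraction of the blow-up $\tilde\Psi$ and the identification of its initial profile $\tilde\Psi(\omega,0)=v_0 e_1+\omega$; this is what converts the otherwise degenerate limit (the entire surface $\phi^{\gamma(\lambda),\lambda}$ collapsing to $p$) into a bona fide smooth family of half-spheres near $\lambda=0$ accessible to inverse and implicit function theorem techniques. Once this is in place, Part (1) becomes a perturbation-of-a-diffeomorphism argument and Part (2) becomes a persistence-of-transverse-intersection argument, both of which follow the classical pattern.
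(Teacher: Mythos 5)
Your proposal is correct and its first half coincides with the paper's proof: the paper performs exactly the same blow-up, Taylor-expanding $\gamma(\lambda)$, $u^{\gamma(\lambda),\lambda}$ and $\varphi^{\gamma(\lambda)}$ to show $\left(F^p\right)^{-1}\circ\phi^{\gamma(\lambda),\lambda}=\lambda v_0e_1+\lambda\omega+\lambda^2R(\lambda,\omega)$, and then invokes the abstract statement of Theorem \ref{foliationendresult}. Where you genuinely diverge is in how that abstract statement is proved. For the case $v_0<1$ the paper (Lemmas \ref{foliationLemma00}--\ref{follem1}) establishes pairwise disjointness by explicit quantitative estimates, covers the interior region by analysing the intersection of each ray $\R_0^+\theta_0$ with $S_\lambda$, and only at the end verifies that $(\lambda,\omega)\mapsto\phi_\lambda(\omega)$ is a diffeomorphism; your polar-coordinate map $\Xi$ compresses all three steps into a single inverse-function-theorem argument, which is cleaner but leaves two things implicit that you should spell out: the local-to-global injectivity upgrade near the compact slice $\Sp^2_+\times\{0\}$ (your compactness/contradiction sketch is the right tool), and the fact that $\Xi$ is a map of manifolds with corners whose behaviour along $\partial\Sp^2_+$ is controlled by the boundary condition $f[\lambda,\partial\Sp^2_+]\subset\R^2\times 0$ (the paper sidesteps this by reflecting across $\{x_3=0\}$ and using Jordan--Brouwer). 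For the case $v_0>1$ the two arguments are truly different: the paper's Lemma \ref{follem2} is topological, showing that $\phi_{\lambda_1+\epsilon}(-e_1)$ lies inside and $\phi_{\lambda_1+\epsilon}(e_1)$ lies outside $S_{\lambda_1}$ and then invoking separation, whereas you locate a transverse intersection of the unperturbed half-spheres $\lambda(v_0e_1+\Sp^2_+)$ and $t\lambda(v_0e_1+\Sp^2_+)$ for $t\in\bigl(1,(v_0+1)/(v_0-1)\bigr)$ and propagate it by the implicit function theorem; your transversality check (the intersection circle lies in a plane orthogonal to $e_1$, so its points with $\omega_3>0$ are not collinear with both centers) is correct, and the chosen base point lies in the open upper half-sphere so the perturbed solutions stay admissible. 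Your route buys a more structural explanation (persistence of a transverse intersection) at the cost of needing the unperturbed intersection to be transverse, which the paper's separation argument does not require; both are valid, and neither has a gap beyond the routine details noted above.
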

As the area-constrained Willmore surfaces close to $p$ are precisely the surfaces $\phi^ {\gamma(\lambda),\lambda}$ it follows that in the second case, no other foliation of area-constrained Willmore surfaces is possible.\\

In Subsection \ref{section3}, we prove that both cases are possible.

\begin{proof}
Throughout this proof, $R(\lambda,\omega)$ denotes a smooth function in $\lambda$ and $\omega$ that may change from line to line.
    Let $U$ be a small neighbourhood of $p$ and $b_1,b_2:U\rightarrow TS$ such that $(b_1(a), b_2(a))$ is an orthonormal basis of $T_a S$ for all $a\in U$. Additionally, we may assume that $\dot\gamma(0)=v_0 b_1(p)$ with $v_0\in[0,\infty)$. We write $b_3(a):=N^S(a)$ so that for small enough $\lambda>0$
\begin{equation}\label{Foliationeq01}
\phi^{\gamma(\lambda),\lambda}=\gamma(\lambda)+\lambda(1+u^{\gamma(\lambda),\lambda}(\omega))\omega_\mu b_\mu (\gamma(\lambda))+\varphi^{\gamma(\lambda)}(\lambda(1+u^{\gamma(\lambda),\lambda}(\omega))\omega_i e_i)b_3(\gamma(\lambda)).
\end{equation}
First, we use Taylor's formula to write
\begin{equation}\label{Foliationeq02}
\gamma(\lambda)=p+\lambda\dot\gamma(0)+\lambda^2\int_0^1 \gamma''(\lambda s) (1-s) ds=p+\lambda\dot\gamma(0)+\lambda^2 R(\lambda,\omega).
\end{equation}
Recalling Theorem \ref{graphfunction}, we define $u:[0,\lambda_0]\rightarrow C^{4,\alpha}(\Sp^2_+)$ by $u(t):=u^{\gamma(\lambda), t}$. By Theorem \ref{graphfunction}, $u$ is smooth and satisfies $u(0)=0$. Hence 
\begin{equation}\label{Foliationeq03}
\lambda u^{\gamma(\lambda),\lambda}=\lambda(u(\lambda)-u(0))=\lambda^2 \int_0^1 u'(\lambda s)ds=\lambda^2 R(\lambda,\omega).
\end{equation}
Next, using $\varphi^a(0)=0$ and $\nabla \varphi^a(0)=0$, we use similar arguments to deduce 
\begin{equation}\label{Foliationeq04}
\varphi^p(\lambda \omega_i e_i)b_3(p)=\lambda^2 R(\lambda,\omega)
\hspace{.5cm}\textrm{and}\hspace{.5cm}
\varphi^{\gamma(\lambda)}(\lambda(1+u^{\gamma(\lambda),\lambda}(\omega))\omega_i e_i)b_3(\gamma(\lambda))=\lambda^2 R(\lambda,\omega).
\end{equation}
Finally, we use $\gamma(0)=p$ to write
\begin{equation}\label{Foliationeq05}
\lambda b_\mu(\gamma(\lambda))=\lambda b_\mu(p)+\lambda^2\int_0^1 \nabla_{\dot\gamma(s\lambda)}b_\mu(\gamma(s\lambda))ds=\lambda b_\mu(p)+\lambda^2 R(\lambda,\omega).
\end{equation}
Inserting Equations \eqref{Foliationeq02}, \eqref{Foliationeq03}, \eqref{Foliationeq04} and \eqref{Foliationeq05} into Equation \eqref{Foliationeq01}, we get
$$
\phi^{\gamma(\lambda),\lambda}(\omega)=p+\lambda\dot\gamma(0)+\lambda \omega_\mu b_\mu(p)+\lambda^2 R[\lambda,\omega]=F^p[\lambda v_0 e_1+\lambda \omega_\mu e_\mu]+\lambda^2 R(\lambda,\omega).
$$
As $F^p$ is a smooth diffeomorphism, we deduce 
$$
\left(F^p\right)^{-1}(\phi^{\gamma(\lambda),\lambda })=\lambda v_0 e_1+\lambda\omega+\lambda^2 R(\lambda,\omega).
$$
Theorem \ref{foliationsection} implies that the surfaces $\left(F^p\right)^{-1}(\Phi^{\gamma(\lambda),\lambda})$ provide a foliation of a deleted neighborhood of $0$ if $v_0<1$ and that this is not the case, when $v_0>1$. Since $F^p$ is a diffeomorphism, the Theorem follows.
\end{proof}

Following essentially the same argument, we establish:
\begin{theorem}\label{maintheoremCMC}
Let $\Omega$ be a smooth bounded domain, $p\in S=\partial\Omega$ be a nondegenerate critical point of $H^S$, $\phi^{\gamma(\lambda),\lambda}$ denote the CMC surfaces from Theorem \ref{theorem3CMC} and put $ v_0:=|\dot\gamma(0)|$.
\begin{enumerate}
    \item If $v_0<1$, there exists $\lambda_0(v)>0$ and a neighborhood $U$ of $p$ in $\bar\Omega$ such that the surfaces  $(\phi^{\gamma(\lambda),\lambda})_{0<\lambda<\lambda_0}$ provide a foliation of $U\backslash\set p$. 
    \item If $v_0>1$, then for no neighborhood $U$ of $p$ in $\bar\Omega$ and $\lambda_0>0$ it is true that the surfaces  $(\phi^{\gamma(\lambda),\lambda})_{0<\lambda<\lambda_0}$ provide a foliation of $U\backslash\set p$. 
\end{enumerate}
\end{theorem}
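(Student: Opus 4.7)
The plan is to run essentially the same argument as in the proof of Theorem \ref{maintheoremWillmore}, with Theorem \ref{graphfunctionCMC} replacing Theorem \ref{graphfunction} and Theorem \ref{theorem3CMC} replacing Theorem \ref{theorem3}; the geometric content is identical. After picking a small neighborhood $U\subset S$ of $p$ and an orthonormal frame $b_1,b_2:U\to TS$ arranged so that $\dot\gamma(0)=v_0 b_1(p)$, setting $b_3:=N^S$, and writing $R(\lambda,\omega)$ for a generic smooth remainder that may change from line to line, I would start from the explicit parameterization
\begin{equation*}
\phi^{\gamma(\lambda),\lambda}=\gamma(\lambda)+\lambda(1+u^{\gamma(\lambda),\lambda}(\omega))\omega_\mu b_\mu(\gamma(\lambda))+\varphi^{\gamma(\lambda)}(\lambda(1+u^{\gamma(\lambda),\lambda}(\omega))\omega_i e_i)b_3(\gamma(\lambda)),
\end{equation*}
which is available because the CMC surfaces $\phi^{\gamma(\lambda),\lambda}$ from \eqref{phiplambdaCMC} are built in exactly the same way as in the Willmore setting.

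The second step is to Taylor-expand each of the four ingredients around $\lambda=0$ and absorb everything of order $\lambda^2$ into $R(\lambda,\omega)$. Concretely, smoothness of $\gamma$ gives $\gamma(\lambda)=p+\lambda\dot\gamma(0)+\lambda^2 R(\lambda,\omega)$; smoothness of $\lambda\mapsto u^{\gamma(\lambda),\lambda}$ as a $C^{2,\alpha}(\Sp^2_+)$-valued map, provided by Theorem \ref{graphfunctionCMC}, together with $u^{\gamma(\lambda),0}=0$, yields $\lambda u^{\gamma(\lambda),\lambda}=\lambda^2 R(\lambda,\omega)$; the chart functions $\varphi^a$ vanish to second order at the origin, giving $\varphi^{\gamma(\lambda)}(\lambda(1+u^{\gamma(\lambda),\lambda})\omega_i e_i)b_3(\gamma(\lambda))=\lambda^2 R(\lambda,\omega)$; and smoothness of the frame along $\gamma$ yields $\lambda b_\mu(\gamma(\lambda))=\lambda b_\mu(p)+\lambda^2 R(\lambda,\omega)$. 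Substituting these back into the parameterization above and collapsing all $\lambda^2$-terms into $R$, the surface becomes
\begin{equation*}
\phi^{\gamma(\lambda),\lambda}(\omega)=F^p(\lambda v_0 e_1+\lambda\omega_\mu e_\mu)+\lambda^2 R(\lambda,\omega).
\end{equation*}

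To finish, I would apply the diffeomorphism $(F^p)^{-1}$, whose smoothness absorbs the remainder without changing its structure, to obtain the perturbed concentric family
\begin{equation*}
(F^p)^{-1}(\phi^{\gamma(\lambda),\lambda})=\lambda v_0 e_1+\lambda\omega+\lambda^2 R(\lambda,\omega).
\end{equation*}
Theorem \ref{foliationsection} applies to exactly this form: it produces a foliation of a punctured neighborhood of $0\in\R^3_+$ when $v_0<1$ and rules out any foliation when $v_0>1$. Pushing this conclusion forward through $F^p$ establishes both halves of the theorem, and the identification $v_0=|\dot\gamma(0)|=\frac13\bigl|\bigl(\nabla^2 H^S(p)\bigr)^{-1}\nabla K^S(p)\bigr|$ follows from Lemma \ref{velocitylemmaCMC}, so the hypothesis on $v_0$ here matches the one in Theorem \ref{theoremCMC}. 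I do not expect any genuinely new obstacle: the reduced regularity ($C^{2,\alpha}$ rather than $C^{4,\alpha}$) is immaterial, since the argument only uses the smooth dependence of $u^{a,\lambda}$ on $(a,\lambda)$ with values in the appropriate Hölder space, which is exactly what Theorem \ref{graphfunctionCMC} supplies.
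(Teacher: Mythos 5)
Your proposal is correct and is exactly the argument the paper intends: the paper itself proves only the Willmore version (Theorem \ref{maintheoremWillmore}) in detail and disposes of the CMC case with ``following essentially the same argument,'' and your adaptation — substituting Theorem \ref{graphfunctionCMC} and Theorem \ref{theorem3CMC}, noting that the drop from $C^{4,\alpha}$ to $C^{2,\alpha}$ is harmless, and feeding the resulting expansion $(F^p)^{-1}(\phi^{\gamma(\lambda),\lambda})=\lambda v_0e_1+\lambda\omega+\lambda^2R(\lambda,\omega)$ into the abstract foliation theorem — is precisely that same argument.
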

Again we remark that the volume-constrained CMC surfaces close to $p$ are precisely the surfaces $\phi^ {\gamma(\lambda),\lambda}$ it follows that in the second case, no other foliation of volume-constrained CMC surfaces is possible.\\

\subsection{Construction of Both Cases}\label{section3}
We show by explicit construction that both cases in Theorems \ref{maintheoremWillmore} and \ref{maintheoremCMC} are possible. Let $D_r(0)$ denote the open ball in $\R^2$ with radius $r$, which is centered at $0$. For $u\in C^\infty(\bar D_1(0)$ we consider the surface defined by the graph 
\begin{equation}\label{domainconstructioneq1}
\Phi_u:\bar D_1(0)\rightarrow\R^3,\ \Phi_u(x,y):=(x,y,u(x,y)).
\end{equation}
For any such surface, there exists a smooth and bounded domain $\Omega\subset\R^3$ such that $\Phi_u(\bar D_{\frac12}(0))\subset\partial\Omega=:S$. We put $p:=\Phi(0,0)$ and assume that $p$ is a nondegenerate critical point, the mean curvature $H^S$ of $S$. That is 
$$\nabla H^S(p)=0\hspace{.5cm}\textrm{and}\hspace{.5cm}\nabla^2 H^S(p)\textrm{ is invertible}.$$
Denoting the Gauß curvature of $S$ by $K^S$, we define 
$$v_0:=\left(\nabla^2 H^S(p)\right)^{-1}\nabla K^S(p)\in T_p S.$$
We work in the local coordinate system defined by $\Phi_u$ in Equation \eqref{domainconstructioneq1}. Then
$$\left(\nabla^2 H^S\right)_{ij} g^{ik}v_0^j=g^{kj}\partial_j K^S.$$
In general, the covariant Hessian has coordinates $(\nabla^2 H^S)_{ij}=\partial_{ij}H^S-\Gamma^k_{\ ij}\partial_k H^S$. However, since $(0,0)$ is a critical point of $H^S$, we have 
\begin{equation}\label{domainconstructioneq2}
\nabla^2 H^S(0,0)=\begin{bmatrix}
\partial_{xx} H^S(0,0) & \partial_{xy} H^S(0,0)\\
\partial_{xy} H^S(0,0) & \partial_{yy} H^S(0,0)
\end{bmatrix}.\end{equation}
Therefore, the components $v_{0,x}$ and $v_{0,y}$ of $v_0$ in the coordinates $\Phi_u$ are given by 
\begin{equation}\label{domainconstructioneq3}
\vec v_0:=\begin{bmatrix}
    v_{0,x}\\
    v_{0,y}
\end{bmatrix}
=
\begin{bmatrix}
\partial_{xx} H^S(0,0) & \partial_{xy} H^S(0,0)\\
\partial_{xy} H^S(0,0) & \partial_{yy} H^S(0,0)
\end{bmatrix}^{-1}
\begin{bmatrix}
    \partial_x K(0,0)\\
    \partial_y K(0,0)
\end{bmatrix}.
\end{equation}
Finally, we relate the euclidean norm of the component vector $\vec v_0$ with the norm of $v_0$. Writing 
$$v_0=v_{0,x}\frac{\partial \Phi_u}{\partial x}\bigg|_{(0,0)}+v_{0,y}\frac{\partial \Phi_u}{\partial y}\bigg|_{(0,0)},$$
it is then readily seen that
\begin{align*} 
|v_0|^2&=
v_{0,x}^2 \left(1+(\partial_ x u(0,0))^2\right)
+v_{0,y}^2\left(1+(\partial_ y u(0,0))^2\right) +2v_{0,x}v_{0,y}\partial_x u(0,0)\partial_y u(0,0)\\
&=v_{0,x}^2+v_{0,y}^2+(v_{0,x}\partial_ x u(0,0)+v_{0,y}\partial_ y u(0,0))^2.
\end{align*}
This shows the following two inequalities:
\begin{equation}\label{v0NormRelation}
|\vec v_0|\leq |v_0|\leq |\vec v_0|\sqrt{1+|\nabla u(0,0)|^2}
\end{equation}

All following computations are executed with Mathematica (see Appendix \ref{mathematicacodeAppendix}). Let $a,c_1$ and $c_2$ denote real parameters. We consider the function
\begin{equation}\label{ConcreteuChoice}
u(x,y)=ax+ a y+xy-c_1x^3-c_2 y^3.
\end{equation}
It is straightforward to check that 
\begin{align*}
\frac{\partial H^S}{\partial x}\bigg|_{x=y=0}&=-\frac{2(a-a^3+3c_1+9a^2c_1+6a^4 c_1)}{(1+2a^2)^{\frac52}},
\\
\frac{\partial H^S}{\partial y}\bigg|_{x=y=0}&=-\frac{2(a-a^3+3c_2+9a^2c_2+6a^4 c_2)}{(1+2a^2)^{\frac52}}.
\end{align*}
Demanding that $(0,0)$ is a critical point of $H^S$ produces linear equations for $c_1$ and $c_2$ with the solution
\begin{equation}\label{Choiceforc1c2}
c_1=c_2=\frac{-a+a^3}{3+9a^2+6a^4}.
\end{equation}
From now on we choose $c_1$ and $c_2$ as in Equation \eqref{Choiceforc1c2}. Using Equation \eqref{domainconstructioneq2} and computing the gradient of the Gauß curvature gives:
\begin{align*} 
\nabla^2 H^S(0,0)&=-\frac{2}{(1+2a^2)^{\frac72}}
\begin{bmatrix}
\frac{a^2(-5-20a^2+a^4)}{1+a^2} & 1+4a^2+a^4\\
1+4a^2+a^4 & \frac{a^2(-5-20a^2+a^4)}{1+a^2}
\end{bmatrix}
\\
\nabla K^S(0,0)&=\frac{4a}{(1+2a^2)^3}\begin{bmatrix}
1\\
1
\end{bmatrix}
\end{align*}
Restricting to $a>0$, it is straightforward to check that
\begin{align}
|\vec v_0|&=\left|(\nabla^2 H^S(0,0))^{-1}\nabla K^S(0,0)\right|
=\left|-\frac{2a(1+a^2)\sqrt{1+2a^2}}{1-15 a^4+2a^6}
\begin{bmatrix}
1 \\ 1\end{bmatrix}\right|\nonumber\\
&=\frac{2a(1+a^2)\sqrt {2(1+2a^2)}}{|1-15 a^4+2a^6|}.\label{speedEndResult}
\end{align}

For $a=0$, we get $\vec v_0=0$ and in view of Equation \eqref{v0NormRelation} also $v_0=0$. This establishes the first case in Theorems \ref{maintheoremWillmore} and \ref{maintheoremCMC}. Next, we note that that the polynomial $p(x)=1-15 x^4+2x^6$ has a unique zero $\xi\in(.5,.52)$. Hence $|\vec v_0|\rightarrow\infty$ for $a\rightarrow\xi^+$, which establishes the second case in both Theorems.

\setcounter{equation}{0}
\section{Outline for the Expansions in \ref{eneergyexplemmaWillmore} and \ref{energyexpansionCMC}}\label{outlinesection}
Let $h$ denote the second fundamental form of $S=\partial\Omega$, $a\in S$ and $(b_1,b_2)$ be an orthonormal basis of $T_a S$ consisting of the eigenvectors of the Weingarten operator. We consider the parameterization 
$$f^a:U\subset\R^2\rightarrow S,\ f[a,x]:=a+x_ib_i+\varphi^a(x)N^S(a)$$
from Subsection \ref{construction}. $f^a(0)=a$ and the first and second fundamental form have the following matrix representations:
$$g_{ij}(0)=\begin{bmatrix}
1 &  \\
 & 1
\end{bmatrix}
\hspace{.5cm}\textrm{and}\hspace{.5cm}
h_{ij}(0)=\begin{bmatrix}
\kappa_1 &  \\
 & \kappa_2
\end{bmatrix}
$$
Here $\kappa_1$ and $\kappa_2$ denote the eigenvalues of the Weingarten operator. We have the following formulas for the mean curvature and Gauß curvature of $S$:
$$H^ S(a)=\kappa_1+\kappa_2\hspace{.5cm}\textrm{and}\hspace{.5cm}K^S(a)=\kappa_1 \kappa_2$$
We abbreviate $\tilde g(\lambda):=\tilde g^{a,\lambda}$. In \cite{AK} (Section 3), it is shown that 
\begin{equation}\label{metricderivativeformula}
\tilde g'(0)=\begin{bmatrix}
 & & \kappa_1x_1\\
  & & \kappa_2x_2\\
\kappa_1x_1 & \kappa_2x_2 & 0
\end{bmatrix}.
\end{equation}

\subsection{Proof of Theorem \ref{eneergyexplemmaWillmore}}\label{Outline}
 Considering the expansion in Equation (\ref{AKExpansion}) that has already been established in \cite{AK}, it suffices to compute 
$$\frac{d^2}{d\lambda^2}\bigg|_{\lambda=0}\mathcal W[\phi^{a,\lambda}]=\frac{d^2}{d\lambda^2}\bigg|_{\lambda=0}\mathcal W[u(a,\lambda),\tilde g^ {a,\lambda}]$$
to prove Lemma \ref{eneergyexplemmaWillmore}. 
We abbreviate $u(\lambda):=u(a,\lambda)$ and write
\begin{equation}\label{WillSecDer}
    \frac{d^2}{d\lambda^2}\bigg|_{\lambda=0}\mathcal W[u(\lambda),\tilde g(\lambda)]=D_1 \mathcal W[0,\delta] u''(0)+D_2 \mathcal W[0,\delta]\tilde g''(0)+\frac{d^2}{d\lambda^2}\bigg|_{\lambda=0}\mathcal W[\lambda u'(0),\delta+\lambda \tilde g'(0)].
\end{equation}
The last term can be computed by deriving explicit formulas for $\tilde g'(0)$ and $u'(0)$. $\tilde g'(0)$ is given in Equation \eqref{metricderivativeformula}. The function $u'(0)$ can be computed analytically (see Appendix \ref{explicitsolutionapp}) and is given by 
\begin{equation}\label{solutionu}
\hspace{-.5cm}u'(0)(\omega)=\frac{\kappa_1-\kappa_2}4\frac{\omega_1^2-\omega_2^2}{1+\omega_3}+(\kappa_1+\kappa_2)\left[1-\ln(2)+\frac{\ln(1+\omega_3)}2-\frac34\omega_3\right]-\frac{\kappa_1\omega_1^2+\kappa_2\omega_2^2}2\omega_3.
\end{equation}
Using these two formulas, $\mathcal W[\lambda u'(0),\delta+\lambda \tilde g'(0)]$ is, in principal, a completely explicit object. A direct machine computation shows
\begin{align}
    &D_1^2 \mathcal W[0,\delta][u'(0), u'(0)]=-\frac{8\pi}7 K^S(a)+\left(\frac{863}{280}-\ln(8)\right)\pi H^S(a)^2,\label{1}\\
    &D_{12}^2\mathcal W[0,\delta][u'(0),\tilde g'(0)]=\frac{23\pi}{14}K^S(a)-\frac{291}{560}\pi H^S(a)^2,\label{2}\\
    &D_2^2\mathcal W[0,\delta][\tilde g'(0),\tilde g'(0)]=\frac{4\pi}{21}K^S(a)+\frac{16\pi}{35} H^S(a)^2.\label{3}
\end{align}
The first two terms in Equation \eqref{WillSecDer} are computed in Appendix \ref{secondderivativeWillmore} and are given by 
\begin{align}
    &D_1\mathcal W[0,\delta] u''(0)=4\pi(\ln(2)-1) H^S(a)^2,\label{4}\\
    &D_2\mathcal W[0,\delta]\tilde g''(0)=-\frac{4\pi}3K^S(a).\label{5}
\end{align}
Combining Equations \eqref{WillSecDer} and \eqref{1}-\eqref{5} gives 
$$\frac{\partial^2}{\partial\lambda^2}\bigg|_{\lambda=0}\mathcal W[u(\lambda), \tilde g(\lambda)]=\pi K^S(a)+\pi\left(\ln(2)-\frac32\right)H^S(a)^2$$
and hence Lemma \ref{energyexpansionWillmore}.

\subsection{Proof of Theorem \ref{energyexpansionCMC}}\label{CMCOutline}
Let $a\in S$. We abbreviate  $u(\lambda):=u(a,\lambda)$ and establish the expansion 
$$\mathcal A[\phi^{a,\lambda}]=\lambda^2 A[u(\lambda),\tilde g(\lambda)]=\lambda^2\left(2\pi-\lambda\frac\pi4 H^S(a)+\frac12\pi\left(\frac16K^S(a) -\frac{35}{192} H^S(a)^2\right)\lambda^2+\mathcal O(\lambda^3)\right)$$
by proving the following two formulas:
\begin{align}
&\frac d{d\lambda}\bigg|_{\lambda=0} A[u(\lambda),\tilde g(\lambda)]=-\frac\pi4 H^S(a)\label{areafirstderivative}\\
&\frac {d^2}{d\lambda^2}\bigg|_{\lambda=0} A[u(\lambda),\tilde g(\lambda)]=\pi\left(\frac16K^S(a) -\frac{35}{192} H^S(a)^2\right)\label{areasecondderivative}
\end{align}
To do this, we write
\begin{align}
&\frac d{d\lambda}\bigg|_{\lambda=0}\hspace{-.1cm}  A[u(\lambda),\tilde g(\lambda)]=D_1 A[0,\delta]u'(0)+D_2 A[0,\delta]\tilde g'(0),\label{areafirstderivative2}\\
&\frac {d^2}{d\lambda^2}\bigg|_{\lambda=0}\hspace{-.1cm} A[u(\lambda),\tilde g(\lambda)]= D_1 A[0,\delta] u''(0)+D_2 A[0,\delta] \tilde g''(0)+\frac{d^2}{d\lambda^2}\bigg|_{\lambda=0} A[\lambda u'(0), \delta+\lambda \tilde g'(0)].\label{areasecondderivative2}
\end{align}
In Equations \eqref{D2AEndResult} and \eqref{CMCAreaLinearizedPHI} in the Appendix we prove that 
\begin{equation}\label{DAResultSummary}
D_1 A[0,\delta] u'(0)=0
\hspace{.5cm}\textrm{and}\hspace{.5cm}
D_2 A[0,\delta]\tilde g'(0)=-\frac\pi4 H^S(a).
\end{equation}
These two identities already establish Equation \eqref{areafirstderivative}. Next, we discuss Equation \eqref{areasecondderivative}.
The last term in Equation Equation \eqref{areasecondderivative} can be computed by deriving explicit formulas for $\tilde g'(0)$ and $u'(0)$. $\tilde g'(0)$ is given in Equation \eqref{metricderivativeformula}. In Appendix \ref{CMCExplicitSolution} we prove that $u'(0)$ is given by 
\begin{equation}\label{CMCuPrimeFormula}
u'(0)(\omega)=\frac{\kappa_1+\kappa_2}4\left(\frac34-\omega_3\right)+\frac{\kappa_1-\kappa_2}4\frac{\omega_1^2-\omega_2^2}{3}\frac{2-3\omega_3+\omega_3^3}{(1-\omega_3^2)^2}
-\frac12\omega_3(\kappa_1\omega_1^2+\kappa_2\omega_2^2).
\end{equation}
Using Equations \eqref{metricderivativeformula} for $\tilde g'(0)$ and \eqref{CMCuPrimeFormula} for $u'(0)$ the following computations can be outsourced to a machine:
\begin{align}
&D_{12}^2 A[0,\delta](u'(0), \tilde g'(0))=\frac{5\pi}{14}K^S(a)-\frac{579\pi}{2240}H^S(a)^2\label{CMC1}\\
&D_{1}^2 A[0,\delta](u'(0), u'(0))=-\left(\frac{31}{270}+\frac49\ln(2)\right)K^S(a)\pi+\left(\frac{2201}{8640}+\frac{\ln(2)}9\right)H^S(a)^2\label{CMC2}\\
&D_2^2A[0,\delta](\tilde g'(0),\tilde g'(0))=\frac{64\pi}{105} K^S(a)-\frac{4\pi}{21} H^S(a)^2\label{CMC3}
\end{align}
In Appendix \ref{secondDerAppCMC} we compute
\begin{align}
&D_2 A[0,\delta]\tilde g''(0)=-\frac{4\pi}{5}K^S(a)+\frac{4\pi}{15}H^S(a)^2,\label{CMC4}\\
&D_1 A[0,\delta]u''(0)=\left(\frac{113}{30240}-\frac{\ln(2)}{9}\right)\pi H^S(a)^2+\left(-\frac{229}{945}+\frac49\ln(2)\right)\pi K^S(a).\label{CMC5}
\end{align}
Combining Equations \eqref{CMC1}, \eqref{CMC2}, \eqref{CMC3}, \eqref{CMC4} and \eqref{CMC5} gives Lemma \ref{eneergyexplemmaCMC}.

\renewcommand{\theequation}{\thesection.\arabic{equation}}
\appendix
\setcounter{equation}{0}
\section{Abstract Foliation Argument}\label{foliationsection}
\begin{theorem}\label{foliationendresult}
Let $v\in[0,\infty)$, $\Lambda>0$ and $f\in C^ \infty([0,\Lambda)\times\Sp^2_+,\R^3_+)$ such that $f[\lambda,\partial\Sp^2_+]\subset\R^2\times 0$. We put 
\begin{equation}\label{abstractsurfaces}
\phi_\lambda(\omega):=\lambda ve_1+\lambda\omega+\lambda^2f[\lambda,\omega].
\end{equation}
If $v\in[0,1)$, then there exists a neighbourhood $U\subset\R^3_+$ of 0 and $\lambda_0\in(0,\Lambda)$ such that the surfaces $(\phi_\lambda)_{0<\lambda<\lambda_0}$ provide a smooth foliation of $U\backslash\set0$. If $v>1$, then the same is not true for any choice of $U$ and $\lambda_0$. In fact, for any $\lambda_0>0$ there exist $0<\lambda_1<\lambda_2<\lambda_0$ such that $\operatorname{im}(\phi_{\lambda_1})\cap \operatorname{im}(\phi_{\lambda_2})\neq\emptyset$.
\end{theorem}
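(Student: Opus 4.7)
The plan is to reduce everything to the unperturbed case $f\equiv 0$, in which $\phi_\lambda$ parametrizes the upper hemisphere of radius $\lambda$ centered at $\lambda v e_1 \in \R^3_+$, and then transfer the conclusions to the perturbed setting. In the unperturbed case the equation $y = \phi_\lambda(\omega)$ reduces, via $|y-\lambda v e_1|^2 = \lambda^2$, to the quadratic $(1-v^2)\lambda^2 + 2vy_1\lambda - |y|^2 = 0$. For $v\in[0,1)$ this has a unique positive root $\lambda(y)$, depending smoothly on $y\in\R^3_+\setminus\{0\}$ with $\lambda(y)\to 0$ as $y\to 0$, which already establishes the unperturbed foliation property. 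For $v > 1$ two such hemispheres of radii $\lambda_1 < \lambda_2$ meet transversally along a circle whenever $\lambda_1/\lambda_2 \in ((v-1)/(v+1),\,1)$, as a direct distance-of-centers computation shows.

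For Case 1 ($v<1$) I would prove that $\Psi(\lambda,\omega) := \phi_\lambda(\omega)$ is a diffeomorphism from $(0,\lambda_0)\times\Sp^2_+$ onto an open set containing a punctured neighbourhood of $0$. Local injectivity follows by expanding $D\Psi$ in the basis $(\omega,\tau_1,\tau_2)$ of $\R^3$ with $(\tau_1,\tau_2)$ orthonormal in $T_\omega\Sp^2$: a direct computation yields $\det D\Psi = \lambda^2(v\omega_1+1)+O(\lambda^3) \geq \tfrac12(1-v)\lambda^2 > 0$. For global injectivity, assuming $\phi_{\lambda_1}(\omega_1)=\phi_{\lambda_2}(\omega_2)$ with $\lambda_1\leq\lambda_2$, I would rearrange this to $\lambda_1\omega_1-\lambda_2\omega_2 = (\lambda_2-\lambda_1)ve_1 + \lambda_2^2 f_2 - \lambda_1^2 f_1$, square, and invoke the algebraic identity
\begin{equation*}
|\lambda_2\omega_2-\lambda_1\omega_1|^2 = (\lambda_2-\lambda_1)^2 + \lambda_1\lambda_2\,|\omega_1-\omega_2|^2
\end{equation*}
together with the perturbation bound $|\lambda_i^2 f[\lambda_i,\omega_i]| \leq C\lambda_i^2$. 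This yields $(1-v^2)(\lambda_2-\lambda_1)^2 + \lambda_1\lambda_2|\omega_1-\omega_2|^2 \leq C'\lambda_2^2(\lambda_2-\lambda_1) + C'\lambda_2^4$, and hence $\lambda_2-\lambda_1 \leq C(1-v)^{-1}\lambda_2^2$ and (since then $\lambda_1\geq\lambda_2/2$) $|\omega_1-\omega_2| \leq C\lambda_2$. Coupled with the local injectivity in the anisotropic coordinates $(\lambda,\lambda\omega)$ that uniformise the two scales of $D\Psi$, this forces $(\lambda_1,\omega_1)=(\lambda_2,\omega_2)$ for $\lambda_0$ small. That the image contains a punctured neighbourhood of $0$ follows from the unperturbed analysis, since the perturbation displaces each point by $O(\lambda^2)=o(\lambda)$.

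For Case 2 ($v>1$) I fix $\rho\in((v-1)/(v+1),1)$ and study the smooth family
\begin{equation*}
F_\lambda:\Sp^2_+\times\Sp^2_+\to\R^3,\qquad F_\lambda(\omega_1,\omega_2):=\frac{\phi_\lambda(\omega_2)-\phi_{\rho\lambda}(\omega_1)}{\lambda} = F_0(\omega_1,\omega_2) + \lambda\,g(\lambda,\omega_1,\omega_2),
\end{equation*}
with $F_0(\omega_1,\omega_2) = (1-\rho)ve_1+\omega_2-\rho\omega_1$ and $g$ smooth. Zeros of $F_0$ satisfy $\omega_2 = \rho\omega_1-(1-\rho)ve_1$ together with $\omega_{1,1} = (\rho^2+(1-\rho)^2v^2-1)/(2\rho(1-\rho)v)$; an elementary check shows that this value lies strictly inside $(-1,1)$ precisely when $\rho\in((v-1)/(v+1),1)$ and $v>1$, so $F_0^{-1}(0)$ is a nonempty smooth curve with interior representatives in $\Sp^2_+\times\Sp^2_+$. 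On this curve $\omega_1$ and $\omega_2$ are neither parallel nor antiparallel (the two degenerate configurations correspond respectively to $\rho=1$ and $\rho=(v-1)/(v+1)$, both excluded), so $T_{\omega_1}\Sp^2+T_{\omega_2}\Sp^2=\R^3$ and $DF_0$ is surjective along $F_0^{-1}(0)$. The implicit function theorem applied to $F_\lambda=F_0+\lambda g$ therefore produces zeros of $F_\lambda$ for every sufficiently small $\lambda>0$; each yields $\phi_{\rho\lambda}(\omega_1)=\phi_\lambda(\omega_2)$ with $\lambda_1=\rho\lambda<\lambda_2=\lambda<\lambda_0$.

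The main obstacle is the quantitative global injectivity in Case 1 uniformly down to $\lambda=0$, where $\Psi$ degenerates and both the codomain diameter and the Jacobian vanish. It is overcome by the algebraic identity displayed above, which captures precisely the geometry of two almost-nested hemispheres and converts the injectivity question into an elementary quadratic inequality with the correct dependence on $1-v$.
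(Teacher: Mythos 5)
Your proposal is correct in outline but takes a genuinely different route from the paper's in both cases, and one step in Case 1 is a real gap that needs filling.

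\paragraph{Case 1 ($v<1$)}
The paper first shows the surfaces are pairwise disjoint by a norm estimate that uses the \emph{Lipschitz} bound $|f[\lambda+\epsilon,\bar\omega]-f[\lambda,\omega]|\leq C(\epsilon+|\omega-\bar\omega|)$, and then obtains coverage via a ray-intersection lemma combined with the Jordan--Brouwer separation theorem and the intermediate value theorem. You instead work directly with $\Psi(\lambda,\omega)=\phi_\lambda(\omega)$ and aim to prove it is a global diffeomorphism. The algebraic identity
\begin{equation*}
|\lambda_2\omega_2-\lambda_1\omega_1|^2=(\lambda_2-\lambda_1)^2+\lambda_1\lambda_2|\omega_1-\omega_2|^2
\end{equation*}
is a nice way to encode the nested-hemisphere geometry, and the computation $\det D\Psi=\lambda^2(v\omega_1+1)+\mathcal O(\lambda^3)$ is correct. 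But your injectivity argument as written does \emph{not} close: using only the crude bound $|\lambda_i^2 f_i|\leq C\lambda_i^2$ you obtain $|\lambda_2-\lambda_1|\lesssim\lambda_2^2$ and $|\omega_1-\omega_2|\lesssim\lambda_2$, and from there equality cannot be deduced without additional input. You gesture at ``local injectivity in the anisotropic coordinates,'' and this can indeed be made to work: in the variable $p=\lambda\omega$ one has $\|D\Psi(p)^{-1}\|\leq C(v)$ and $\|D^2\Psi(p)\|\leq C/|p|$, so the quantitative inverse function theorem gives an injectivity radius $\gtrsim|p|$, which exceeds the derived separation $|p_1-p_2|\lesssim|p_2|^2$. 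Alternatively, feeding the Lipschitz bound on $f$ into the error term (as the paper does) lets you absorb and conclude $\lambda_1=\lambda_2$, $\omega_1=\omega_2$ directly. Either way the step must be spelled out. The coverage statement (``the perturbation displaces by $O(\lambda^2)=o(\lambda)$'') likewise needs an IFT/contraction argument around the unperturbed solution or, as in the paper, an IVT on rays; as written it is an assertion, not a proof, though the claim is true.

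\paragraph{Case 2 ($v>1$)}
Here your approach genuinely diverges from the paper's and is correct and complete. The paper locates $p_-(\lambda_1+\epsilon)\in\mathcal I_{\lambda_1}$ and $p_+(\lambda_1+\epsilon)\in\mathcal E_{\lambda_1}$ and applies the Jordan--Brouwer separation theorem plus an intermediate value argument. You instead fix $\rho\in((v-1)/(v+1),1)$, exhibit a transversal zero of the unperturbed $F_0$, check that the normals $\omega_1,\omega_2$ are not parallel there so $DF_0$ is surjective, and apply the implicit function theorem to produce intersections for all small $\lambda$. The algebra ($\omega_{1,1}\in(-1,1)$ iff $\rho\in((v-1)/(v+1),1)$) is right. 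This is more constructive and avoids the topological separation theorem entirely; it is a clean and arguably more transparent alternative to the paper's argument.
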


The proof of Theorem \ref{foliationendresult} is split into several lemmas.

\begin{lemma}\label{foliationLemma00}
There exists $\lambda_0>0$ such that for $\lambda\in(0,\lambda_0)$ the maps $\phi_\lambda$ are injective. For such $\lambda,$ the surface $\phi_\lambda$ split $\R^3_+$ into a bounded \emph{interior part} $\mathcal I_{\lambda}$, an unbounded exterior part $\mathcal E_{\lambda}$ and the \emph{boundary}  $S_\lambda:=\operatorname{im}(\phi_\lambda)$. Additionally, if $c:[0,1]\rightarrow\R^3_+$ is continuous with $c(0)\in\mathcal I_\lambda$ and $c(1)\in\mathcal E(\lambda)$, there exists $t\in(0,1)$ so that $c(t)\in S_\lambda$. 
\end{lemma}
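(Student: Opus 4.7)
The plan is to proceed in three steps: establish injectivity via a rescaling, apply the Jordan--Brouwer separation theorem after capping $S_\lambda$ with a flat disk, and then derive the path-crossing statement from the connectedness of $[0,1]$.

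For injectivity, I would set $g_\lambda(\omega) := v e_1 + \omega + \lambda f[\lambda,\omega]$, so that $\phi_\lambda = \lambda g_\lambda$. On any compact $[0,\tilde\Lambda]\times\Sp^2_+$ with $\tilde\Lambda < \Lambda$, smoothness of $f$ yields a uniform Lipschitz bound $|f[\lambda,\omega]-f[\lambda,\omega']|\leq C|\omega-\omega'|$. The reverse triangle inequality then gives
\[
|g_\lambda(\omega) - g_\lambda(\omega')| \geq (1 - \lambda C)|\omega-\omega'|,
\]
so $g_\lambda$, and hence $\phi_\lambda$, is injective once $\lambda < 1/C$. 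The same bound applied to tangent vectors shows $\phi_\lambda$ is also an immersion, so it is a topological embedding of the compact manifold-with-boundary $\Sp^2_+$.

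For the separation, the restriction $\phi_\lambda|_{\partial\Sp^2_+}$ lands in $\R^2\times\{0\}$ by the hypothesis on $f$ and is an embedded $C^\infty$ Jordan curve $\Gamma_\lambda$. The Jordan curve theorem produces a bounded enclosed region $\Delta_\lambda \subset \R^2\times\{0\}$, and $S_\lambda \cup \Delta_\lambda$ is then a topologically embedded $\Sp^2$ in $\R^3$. Jordan--Brouwer provides a bounded open component $B_\lambda$ and an unbounded one $U_\lambda$. Because $S_\lambda\cup\Delta_\lambda \subset \{x_3\geq 0\}$, the open set $\{x_3<0\}$ lies in $U_\lambda$, and any point on $\{x_3=0\}\setminus\overline{\Delta_\lambda}$ can be joined to $\{x_3<0\}$ by a downward segment in the complement; it follows that $B_\lambda\subset\{x_3>0\}$.

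I would then define $\mathcal I_\lambda := B_\lambda \cup \mathring\Delta_\lambda$ and $\mathcal E_\lambda := \R^3_+\setminus(S_\lambda\cup\overline{\mathcal I_\lambda})$, so that $\R^3_+ = \mathcal I_\lambda \sqcup S_\lambda \sqcup \mathcal E_\lambda$. The essential check is that both $\mathcal I_\lambda$ and $\mathcal E_\lambda$ are open in the subspace topology of $\R^3_+$: at interior points this is immediate, while at $p\in\mathring\Delta_\lambda$ a sufficiently small half-ball has its $\{x_3>0\}$-part in $B_\lambda$, and at $p\in\{x_3=0\}\setminus\overline\Delta_\lambda$ a small half-ball stays in $U_\lambda$. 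The path-crossing statement then follows by connectedness: if $c:[0,1]\to\R^3_+$ joined $c(0)\in\mathcal I_\lambda$ to $c(1)\in\mathcal E_\lambda$ while avoiding $S_\lambda$, then $c^{-1}(\mathcal I_\lambda)$ and $c^{-1}(\mathcal E_\lambda)$ would form a disconnection of $[0,1]$. The main technical point is that $S_\lambda$ is a surface with boundary sitting on $\partial\R^3_+$, so classical Jordan--Brouwer does not apply directly; capping by $\Delta_\lambda$ and the subsequent openness check at points of $\partial\R^3_+$ are the only places where genuine care is required.
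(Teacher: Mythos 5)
Your argument is correct, and the injectivity step coincides with the paper's (the same Lipschitz/absorption estimate, just phrased via the rescaled map $g_\lambda$). The separation step, however, takes a genuinely different route. The paper does not cap the surface: it \emph{doubles} it, defining $\bar\phi_\lambda:\Sp^2\to\R^3$ by reflecting $\phi_\lambda$ across $\{x_3=0\}$ via $R(x,y,z)=(x,y,-z)$, observes that $\bar\phi_\lambda$ is an injective continuous map of a compact space and hence an embedding of $\Sp^2$, applies Jordan--Brouwer to $\bar\phi_\lambda(\Sp^2)$, and simply sets $\mathcal I_\lambda=\operatorname{int}(\bar\phi_\lambda)\cap[z\geq 0]$ and $\mathcal E_\lambda=\operatorname{ext}(\bar\phi_\lambda)\cap[z\geq 0]$. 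The reflection trick is slightly more economical than your capping: it needs only Jordan--Brouwer in $\R^3$, whereas your construction additionally invokes the planar Jordan curve theorem for $\Gamma_\lambda$ and, implicitly, a Schoenflies/Alexander-trick argument to certify that $S_\lambda\cup\overline{\Delta_\lambda}$ is a topologically embedded sphere rather than merely a union of two disks; it also spares you the explicit openness check along $\partial\R^3_+$, since by symmetry the reflected sphere meets $\{x_3=0\}$ transversally in the obvious sense. What your version buys is that it avoids having to argue injectivity and continuity of the doubled map across the equator. Note that both arguments rest on the same small fact, which you should state explicitly: since $f$ takes values in $\R^3_+$, the third coordinate of $\phi_\lambda(\omega)$ is at least $\lambda\omega_3$, so $S_\lambda$ meets the plane $\{x_3=0\}$ exactly in $\Gamma_\lambda$; without this, your cap $\Delta_\lambda$ could intersect $S_\lambda$ in its interior (and the paper's doubled map could fail to be injective).
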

\begin{proof}
For small $\lambda$ the map $\phi_\lambda(\omega)$ is injective as $|D_2 f|\leq C$. Indeed, $\phi_\lambda(\omega)=\phi_\lambda(\bar\omega)$ implies 
$$\lambda|\omega-\bar\omega|=\lambda^2|f[\lambda,\omega]-f[\lambda,\bar\omega]|\leq C\lambda^2 |\omega-\bar\omega|.$$
We denote the reflection along $x_3=0$ by $R(x,y,z):=(x,y,-z)$ and define 
$$\bar\phi_\lambda:\Sp^2\rightarrow\R^3,\ \bar\phi(\omega):=\left\{
\begin{aligned}
\phi(\omega) & \textrm{ for $\omega\in\Sp^2_+$,}\\
R(\phi(R(\omega))) & \textrm{ for $\omega\not\in\Sp^2_+$.}
\end{aligned}
\right.
$$
Clearly $\bar\phi$ is injective and continuous. Since $\Sp^2$ is compact, $\bar\phi$ is a homeomorphism onto its image. The Jordan-Brouwer separation theorem (see e.g. \cite{MayerBook}, Corollary 5.24) implies that $\bar\phi_\lambda( \Sp^2)$ separates $\R^3$ into a bounded interior and an unbounded exterior domain. We denote these by $\operatorname{int}(\bar\phi_\lambda)$ and $\operatorname{ext}(\bar\phi_\lambda)$ respectively. The lemma follows by putting
$$\mathcal I_\lambda:=\operatorname{int}(\bar\phi_\lambda)\cap[ z\geq 0]\hspace{.5cm}\textrm{and}\hspace{.5cm} \mathcal E_\lambda:=\operatorname{ext}(\bar\phi_\lambda)\cap[ z\geq 0].$$
The intersection point of $c$ with $S_\lambda$ is a direct consequence form the Jordan-Brouwer separation theorem.
\end{proof}

\begin{lemma}\label{lemma1}
Let $v\in[0,1)$, $\phi_\lambda$ be as in Equation \eqref{abstractsurfaces}, $\theta_0\in\Sp^2_+$ and consider the ray $\R_0^+\theta_0$. There exists $\lambda_0(v)>0$  such that for all $\lambda\in(0,\lambda_0]$ there exists a unique intersection point $t(\lambda,\theta_0)\theta_0\in S_\lambda\cap\R_0^ +\theta_0$. By Lemma \ref{foliationLemma00}, there then exists a unique $\omega(\lambda,\theta_0)\in\Sp^2_+$ such that $t(\lambda,\theta_0)\theta_0=f_\lambda(\omega(\lambda,\theta_0))$. The map $(\lambda,\theta_0)\mapsto (t(\lambda,\theta_0),\omega(\lambda,\theta_0))$ is smooth. 
\end{lemma}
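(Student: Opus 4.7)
My approach is to recast the intersection problem as a question about a projection map to the sphere. Writing
$$\phi_\lambda(\omega) = \lambda\bigl(v e_1 + \omega + \lambda f[\lambda,\omega]\bigr),$$
and noting that for $v \in [0,1)$ the vector $v e_1 + \omega + \lambda f[\lambda,\omega]$ has Euclidean norm at least $1 - v - C\lambda > 0$ uniformly in $\omega \in \Sp^2_+$ for small $\lambda$, I define the smooth map
$$\Psi_\lambda : \Sp^2_+ \longrightarrow \Sp^2, \qquad \Psi_\lambda(\omega) := \frac{v e_1 + \omega + \lambda f[\lambda,\omega]}{\bigl|v e_1 + \omega + \lambda f[\lambda,\omega]\bigr|}.$$
For $\lambda > 0$, a point on $\R_0^+\theta_0$ lies on $S_\lambda$ precisely when $\Psi_\lambda(\omega) = \theta_0$ for some $\omega \in \Sp^2_+$, and the associated parameter is $t = |\phi_\lambda(\omega)| = \lambda|v e_1 + \omega + \lambda f[\lambda,\omega]|$. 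The hypothesis $f[\lambda,\partial\Sp^2_+] \subset \R^2 \times 0$, together with $\phi_\lambda(\Sp^2_+)\subset\R^3_+$, guarantees that $\Psi_\lambda$ takes values in $\Sp^2_+$ and sends $\partial\Sp^2_+$ into $\partial\Sp^2_+$.

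The heart of the argument is to show that $\Psi_\lambda$ is a diffeomorphism of $\Sp^2_+$ onto itself for all $\lambda \in [0,\lambda_0(v)]$. At $\lambda = 0$ I verify this by explicit inversion: given $\theta_0 \in \Sp^2_+$, the equation $\Psi_0(\omega) = \theta_0$ reads $\tau \theta_0 = v e_1 + \omega$ for some $\tau > 0$, which together with $|\omega|^2 = 1$ yields the quadratic $\tau^2 - 2 v \theta_{0,1}\tau + v^2 - 1 = 0$. Since $v < 1$, its discriminant $4(1 - v^2 + v^2\theta_{0,1}^2)$ is uniformly positive on $\Sp^2_+$, so the unique positive root
$$\tau_0(\theta_0) := v \theta_{0,1} + \sqrt{1 - v^2 + v^2\theta_{0,1}^2}$$
defines a smooth inverse $\Psi_0^{-1}(\theta_0) = \tau_0(\theta_0)\theta_0 - v e_1$, which lies in $\Sp^2_+$ because $\tau_0 > 0$ and $\theta_{0,3} \geq 0$. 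The identity $\theta_0 \cdot \omega_0 = \tau_0 - v \theta_{0,1} = \sqrt{1 - v^2 + v^2\theta_{0,1}^2} \geq \sqrt{1 - v^2}$ shows that $d\Psi_0$ is uniformly non-degenerate on $\Sp^2_+$.

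For $\lambda \in (0,\lambda_0]$ with $\lambda_0 = \lambda_0(v)$ small enough, $\Psi_\lambda$ is a $C^\infty$-small boundary-preserving perturbation of the diffeomorphism $\Psi_0$ on the compact manifold-with-boundary $\Sp^2_+$, and hence remains a diffeomorphism onto $\Sp^2_+$. Setting $\omega(\lambda,\theta_0) := \Psi_\lambda^{-1}(\theta_0)$ and $t(\lambda,\theta_0) := \lambda\bigl|v e_1 + \omega(\lambda,\theta_0) + \lambda f[\lambda, \omega(\lambda,\theta_0)]\bigr|$ then gives the unique intersection point $t\theta_0 = \phi_\lambda(\omega)$ with the claimed smooth joint dependence on $(\lambda,\theta_0)$. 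The main obstacle is upgrading local to global uniqueness on the ray: the implicit function theorem applied at each $(0,\theta_0)$ gives only local invertibility of $\Psi_\lambda$, and compactness of $\Sp^2_+$ provides a uniform $\lambda_0$ only for this local statement. Globality is promoted through the standard argument that a proper local diffeomorphism between connected manifolds-with-boundary is a covering map, and since $\Sp^2_+$ is simply connected the covering must be trivial — ruling out the appearance of any second preimage for small $\lambda$.
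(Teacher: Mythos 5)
Your proof is correct, but it takes a genuinely different route from the paper. The paper obtains existence of the intersection point from the Jordan--Brouwer separation statement of Lemma \ref{foliationLemma00}, proves uniqueness by hand --- writing $\lambda^2=|t\theta_0-\lambda ve_1-\lambda^2f|^2$ as a quadratic in $t$, extracting the lower bound $t-\lambda v e_1\cdot\theta_0\geq C(v)\lambda$, subtracting the two equations for $(t,\omega)$ and $(\bar t,\bar\omega)$, and absorbing error terms --- and then gets smoothness separately from the implicit function theorem applied to $\mathcal G(t,\omega,\lambda,\theta_0)=t\theta_0-\lambda ve_1-\lambda\omega-\lambda^2f[\lambda,\omega]$. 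You instead encode existence, uniqueness and smooth dependence all at once in the single statement that the radial projection $\Psi_\lambda$ is a diffeomorphism of $\Sp^2_+$, verified by explicit inversion at $\lambda=0$ (your quadratic for $\tau$ is essentially the paper's quadratic for $t/\lambda$ in the unperturbed case) and by stability of diffeomorphisms of a compact manifold-with-boundary under $C^1$-small boundary-preserving perturbations. This is cleaner and more conceptual, and it makes the role of the hypothesis $v<1$ transparent (positivity of the discriminant, hence a single positive root $\tau_0$); the price is the soft-topology step at the end, where you should make explicit that $\Psi_\lambda$ maps interior to interior and boundary to boundary (this uses $f^3\geq0$ and $f^3=0$ on $\partial\Sp^2_+$), so that the local-diffeomorphism and covering-map arguments apply in the manifold-with-boundary sense; alternatively, the covering-space invocation can be replaced by the more elementary observation that $\Psi_0^{-1}\circ\Psi_\lambda$ is $C^1$-close to the identity and hence injective by the mean value inequality, with surjectivity following from openness and closedness of the image in the connected set $\Sp^2_+$. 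One should also note, as you implicitly do, that $0\notin S_\lambda$ since $|ve_1+\omega+\lambda f|\geq 1-v-C\lambda>0$, so every intersection with the ray indeed corresponds to a solution of $\Psi_\lambda(\omega)=\theta_0$.
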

 \begin{proof}
 The existence of one intersection point follows from Lemma \ref{foliationLemma00}. Next we derive uniqueness. Suppose
 \begin{equation}\label{eq1eq2}\\
      t\theta_0= \lambda ve_1+ \lambda\omega+\lambda^2f[\lambda,\omega]\hspace{.5cm}\textrm{and}\hspace{.5cm}
      \bar t\theta_0 =\lambda ve_1+\lambda\bar\omega+\lambda^2f[\lambda,\bar\omega].
  \end{equation}
 Since $f$ is smooth, we have $|f|+|D_2 f|\leq C$ and hence for $\lambda_0$ small enough
  \begin{equation}\label{lowerbound}
  |t|\leq C\lambda,\ |\bar t|\leq C\lambda\hspace{.5cm}\textrm{and}\hspace{.5cm}|t-\bar t|\geq\lambda|\omega-\bar\omega|-\lambda^2|f[\lambda,\omega]-f[\lambda,\bar\omega]|\geq\frac12\lambda|\omega-\bar\omega|.
  \end{equation}
Equation (\ref{eq1eq2}) gives
  \begin{align}
      \lambda^2&=(t\theta_0-\lambda ve_1-\lambda^2 f[\lambda,\omega])^2=t^2-2t(\lambda ve_1+ \lambda^2 f[\lambda,\omega])\cdot\theta_0+\lambda^2 v^2 +\lambda^3 R[\lambda, \omega],\label{eq3}\\
      \lambda^2&=(\bar t\theta_0-\lambda ve_1-\lambda^2 f[\lambda,\bar \omega])^2=\bar t^2-2\bar t(\lambda ve_1+ \lambda^2 f[\lambda,\bar \omega])\cdot\theta_0+\lambda^2 v^2 +\lambda^3 R[\lambda, \bar \omega].\label{eq4}
  \end{align}
Here $R$ is a smooth function in $\lambda$ and $\omega$. Equation \eqref{eq3} is a quadratic equation for $t$. Using that $t\geq 0$ and $v\in[0,1)$, we deduce that for $\lambda$ small enough there is $C(v)\in(0,1)$ such that 
\begin{align}
t-\lambda ve_1\cdot\theta_0=& \lambda^2 f[\lambda,\omega]+\sqrt{(\lambda ve_1\cdot\theta_0 +\lambda^2 f[\lambda,\omega])^2+\lambda^2(1-v^2-\lambda R[\lambda,\omega])}\nonumber\\
\geq&\lambda^2 f[\lambda,\omega]+\lambda\sqrt{1-v^2-\lambda R[\lambda,\omega]}\nonumber\\
\geq&\lambda \sqrt{1-v^2-C\lambda}-C\lambda^2\nonumber\\
\geq &C(v)\lambda.\label{FoliationproofTformula}
\end{align}
  A respective estimate can be derived for $\bar t$. 
  Subtracting (\ref{eq3}) and (\ref{eq4}) and using Estimate \eqref{FoliationproofTformula} gives 
    \begin{align}
    &\left|2t \lambda^2 f[\lambda,\omega]-2\bar t \lambda^2 f[\lambda,\bar\omega]+\lambda^3 R[\lambda,\bar\omega]-\lambda^3 R[\lambda,\omega]\right|\nonumber\\
    \geq &
    |t-\bar t|\ \left|t+\bar t-2\lambda ve_1\cdot\theta_0\right|\nonumber\\
    \geq& C(v)\lambda|t-\bar t|.\label{eqhalf43}
    \end{align}
 Estimating the top line of Equation \eqref{eqhalf43} by adding and subtracting $2t\lambda^2 f[\lambda,\bar\omega]$ yields
  $$C(v)\lambda|t-\bar t|\leq C\lambda^2 |t-\bar t|+C |t|\lambda^2|\omega-\bar\omega|+C\lambda ^3|\omega-\bar\omega|.$$
 For $\lambda$ small enough we can absorb $\lambda^2|t-\bar t|$ to the left. Using that $|t|\leq C\lambda$ we then get $\lambda|t-\bar t|\leq C\lambda^3|\omega-\bar\omega|$.  Combined with Equation (\ref{lowerbound}) we deduce 
  $\frac12\lambda|\bar\omega-\omega|\leq C\lambda^2|\omega-\bar\omega|$ which shows $\omega=\bar\omega$ for small enough $\lambda_0$. Since $|t-\bar t|\leq C\lambda^2|\omega-\bar\omega|$ we also get $t=\bar t$.\\ 
  
  Now we show that $(t,\omega)$ depend regularly on $\lambda$. Let $\lambda_*>0$, $\theta_{0*}\in\Sp^2_+$ and $(t_*,\omega_*)$ be the unique solution to 
  \begin{equation}\label{theatatsmoothdependenceeq01}
  t\theta_{0*}-\lambda_* ve_1-\lambda_*\omega-\lambda_*^2f[\lambda_*,\omega]=0.
  \end{equation}
  We show that $\theta_{0*}\not\in T_{\omega_*}\Sp^2_+$ by demonstrating $\theta_{0*}\cdot\omega_*\neq 0$. Indeed, multiplying Equation \eqref{theatatsmoothdependenceeq01} with $\omega_*$ shows that for $\lambda_*$ sufficiently small
  $$t_*\theta_{0*}\cdot\omega_*=\lambda_*+\lambda_* ve_1\cdot\omega_*+\lambda_*^2f[\lambda_*,\omega_*]\cdot\omega_*\geq \lambda_*(1-v)-C\lambda_*^2>0.$$
  We consider the smooth map
  $$\mathcal G:[0,\infty)\times\Sp^2_+\times [0,\lambda_0)\times \Sp^2_+\rightarrow\R^3,\ \mathcal G(t,\omega,\lambda,\theta_0):=t\theta_0-\lambda ve_1-\lambda\omega-\lambda^2f[\lambda,\omega].$$
  Then clearly $\mathcal G(t_*, \omega_*,\lambda_*,\theta_{0*})=0$. We now prove that $D_{1,2}\mathcal G[t_*,0,\lambda_*,\theta_{0*}]$ is invertible. To do so, we compute 
  \begin{align*} 
  D_{1,2}\mathcal G[t_*, 0,\lambda_*,\theta_{0*}](\lambda_*s,\xi)=&\lambda_*s\theta_{0*}-\lambda_*\xi-\lambda_*^2D_2 f[\lambda_*,\omega_*]\xi\\
  =&\lambda_*\left(s\theta_{0*}-\xi-\lambda_*D_2 f[\lambda_*, \omega_*]\xi\right).
  \end{align*}
  The map $\R\times T_{\omega_*}\Sp^2_+\ni (s,\xi)\mapsto s\theta_{0*}-\xi\in\R^3$ is an invertible linear map since $\theta_0\not\in T_{\omega_*} \Sp^2_+$. So, for small enough $\lambda_*$ the operator $D_{1,2}\mathcal G[t_*, 0,\lambda_*]$ is also invertible. By the implicit function theorem, the map $(\lambda,\theta_0)\mapsto (\omega(\lambda,\theta_0),t(\lambda,\theta_0))$ is locally and therefore also globally smooth.  
 \end{proof}

\begin{lemma}[Foliation case]\label{follem1}
Let $v\in[0,1)$ and $\phi_\lambda$ be as in Equation \eqref{abstractsurfaces}. There exists $\lambda_0(v)>0$ such that $\mathcal I_{\lambda_0}\backslash\set 0$ is foliated by $S_\lambda$ with $\lambda\in(0,\lambda_0)$. 
\end{lemma}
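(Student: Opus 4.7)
The plan is to reduce the foliation statement to a monotonicity property of the radial intersection function $t(\lambda,\theta_0)$ constructed in Lemma \ref{lemma1}. Once we know that for each $\theta_0\in\Sp^2_+$ the map $\lambda\mapsto t(\lambda,\theta_0)$ is a strictly increasing smooth diffeomorphism of $[0,\lambda_0]$ onto its image, every ray $\R_0^+\theta_0$ from the origin meets each surface $S_\lambda$ exactly once as $\lambda$ sweeps through $(0,\lambda_0)$, so the surfaces $(S_\lambda)_{0<\lambda<\lambda_0}$ automatically cover $\mathcal I_{\lambda_0}\setminus\set 0$ without overlap.

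The crucial monotonicity follows from computing $\partial_\lambda t$ at $\lambda=0$. Setting $s(\lambda,\theta_0):=t(\lambda,\theta_0)/\lambda$ and dividing Equation \eqref{eq3} by $\lambda^2$ yields
\begin{equation*}
s^2-2s\bigl(ve_1+\lambda f[\lambda,\omega]\bigr)\cdot\theta_0+v^2-1+\lambda R[\lambda,\omega]=0.
\end{equation*}
Writing $c_0:=e_1\cdot\theta_0\in[-1,1]$, the positive root of the limit quadratic $s^2-2svc_0+v^2-1=0$ is
\begin{equation*}
s_0(\theta_0)=vc_0+\sqrt{v^2c_0^2+1-v^2},
\end{equation*}
and a short optimization in $c_0$ gives the uniform lower bound $s_0(\theta_0)\geq 1-v>0$, which is where the hypothesis $v<1$ enters decisively. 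Since by Lemma \ref{lemma1} the function $t(\lambda,\theta_0)=\lambda s(\lambda,\theta_0)$ extends smoothly to $\lambda=0$ with $t(0,\theta_0)=0$, we conclude $\partial_\lambda t(0,\theta_0)=s_0(\theta_0)\geq 1-v$, and smoothness together with compactness of $\Sp^2_+$ yields the estimate $\partial_\lambda t(\lambda,\theta_0)\geq (1-v)/2$ throughout $[0,\lambda_0]\times\Sp^2_+$ after shrinking $\lambda_0$.

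With this in hand, fix $x\in\mathcal I_{\lambda_0}\setminus\set 0$ and write $x=r\theta_0$ with $r>0$ and $\theta_0\in\Sp^2_+$. For $\lambda_0$ small the origin lies in $\mathcal I_{\lambda_0}$, since $S_{\lambda_0}$ is an $O(\lambda_0^2)$-perturbation of the half-sphere of radius $\lambda_0$ centered at $\lambda_0ve_1$ and the hypothesis $v<1$ places $0$ strictly inside this model half-sphere; together with Lemma \ref{lemma1}, this shows that the segment $[0,t(\lambda_0,\theta_0)]\theta_0$ lies in $\mathcal I_{\lambda_0}\cup S_{\lambda_0}$, whence $r<t(\lambda_0,\theta_0)$. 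Strict monotonicity and the intermediate value theorem now produce a unique $\lambda\in(0,\lambda_0)$ with $t(\lambda,\theta_0)=r$, i.e.\ $x\in S_\lambda$, and smoothness of the resulting foliation follows from the inverse function theorem applied to $(\lambda,\theta_0)\mapsto(\lambda,t(\lambda,\theta_0)\theta_0)$, whose differential is invertible precisely because $\partial_\lambda t>0$. The main obstacle I expect is the soft but not entirely trivial verification that $\mathcal I_{\lambda_0}$ is star-shaped about $0$ and contains $0$ in its interior; this must be argued by comparing $S_\lambda$ with its unperturbed half-sphere model, again using $v<1$.
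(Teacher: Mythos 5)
Your monotonicity strategy is a genuinely different route from the paper's. The paper proves disjointness of the leaves directly: it assumes $\phi_\lambda(\omega)=\phi_{\lambda+\epsilon}(\bar\omega)$, runs Taylor-type estimates as in Estimate \eqref{FoliationCaseeq}, and concludes $\epsilon=0$; covering and smoothness are then handled separately via the map $\mathcal F(\lambda,\omega)=\phi_\lambda(\omega)$. You instead reduce everything to $\partial_\lambda t(\lambda,\theta_0)>0$, and your computation of the limit root $s_0(\theta_0)=vc_0+\sqrt{v^2c_0^2+1-v^2}\geq 1-v$ (with minimum at $c_0=-1$, justified by $s_0'(c_0)>0$) is correct and is essentially the content of the paper's Estimate \eqref{FoliationproofTformula}. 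Packaging disjointness and covering into one monotonicity statement is a clean idea and, once properly justified, gives a shorter argument than the paper's.

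However, there is a genuine gap in how you justify the monotonicity. You write that ``by Lemma \ref{lemma1} the function $t(\lambda,\theta_0)=\lambda s(\lambda,\theta_0)$ extends smoothly to $\lambda=0$'' and then invoke compactness of $[0,\lambda_0]\times\Sp^2_+$ to get a uniform lower bound on $\partial_\lambda t$. Lemma \ref{lemma1} establishes smoothness only on $(0,\lambda_0]\times\Sp^2_+$: its implicit-function-theorem argument is applied at base points $\lambda_*>0$, and the map $\mathcal G(t,\omega,\lambda,\theta_0)=t\theta_0-\lambda ve_1-\lambda\omega-\lambda^2f[\lambda,\omega]$ becomes degenerate at $\lambda=0$ (its differential in $(t,\omega)$ collapses). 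To extend to $\lambda=0$ you must pass to the rescaled map $\hat{\mathcal G}(s,\omega,\lambda,\theta_0):=s\theta_0-ve_1-\omega-\lambda f[\lambda,\omega]$, check that $(s_0,\omega_0)$ with $\omega_0=s_0\theta_0-ve_1$ solves $\hat{\mathcal G}=0$ at $\lambda=0$, and verify that $\theta_0\cdot\omega_0=s_0-vc_0=\sqrt{v^2c_0^2+1-v^2}>0$ so that $D_{s,\omega}\hat{\mathcal G}$ is invertible there. This is all doable, but it is a separate argument and not a consequence of Lemma \ref{lemma1} as stated. A secondary remark: the star-shapedness of $\mathcal I_{\lambda_0}$ that you flag as the ``main obstacle'' can be avoided entirely. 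Since the ray $\R_0^+\theta_0$ meets $S_{\lambda_0}$ in the single point $t(\lambda_0,\theta_0)\theta_0$ and lies in $\mathcal E_{\lambda_0}$ for large parameters, the Jordan--Brouwer separation established in Lemma \ref{foliationLemma00} forces $t\theta_0\in\mathcal E_{\lambda_0}$ for all $t>t(\lambda_0,\theta_0)$; hence $x=r\theta_0\in\mathcal I_{\lambda_0}$ already implies $r<t(\lambda_0,\theta_0)$ without any convexity or star-shapedness argument, which is exactly how the paper handles this step.
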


\begin{proof}
First, we claim that for $v\in[0,1)$ there exists $\lambda_0(v)>0$ such that for all $0<\lambda_1<\lambda_2 \leq \lambda_0$ we have $\operatorname{im}\phi_{\lambda_1}\cap\operatorname{im}\phi_{\lambda_2}=\emptyset$.
Indeed suppose that $\omega,\bar\omega\in\Sp^2_+$ and $\epsilon>0$ 
\begin{equation}\label{eq}
\lambda v e_1+\lambda\omega+\lambda^2f[\lambda,\omega]=(\lambda+\epsilon) v e_1 +(\lambda+\epsilon)\bar\omega+(\lambda+\epsilon)^2f[\lambda+\epsilon,\bar\omega].
\end{equation}
Using the smoothness of $f$, we deduce 
\begin{align}
    \lambda&=|\lambda\omega|\nonumber\\
    &\geq |\epsilon ve_1+(\lambda+\epsilon)\bar\omega|-|(\lambda+\epsilon)^2f[\lambda+\epsilon,\bar\omega]-\lambda^2f[\lambda,\omega]|\nonumber\\
    &\geq \lambda+\epsilon-\epsilon v-C|(\lambda+\epsilon)^2-\lambda^2|-C\lambda^2 |\omega-\bar\omega|\nonumber\\
    &\geq \lambda+\epsilon-\epsilon v-C(\lambda\epsilon+\epsilon^2)-C\lambda^2 |\omega-\bar\omega|\nonumber\\
    &\geq \lambda+\epsilon(1-v-C\lambda-C\epsilon)-C\lambda^2 |\omega-\bar\omega|.\label{FoliationCaseeq}
\end{align}
Next, we use Equation \eqref{eq} to estimate 
\begin{align*} 
\lambda |\omega-\bar\omega|&\leq (1+v)\epsilon+|(\lambda+\epsilon)^2f[\lambda+\epsilon,\bar\omega]-\lambda^2f[\lambda,\omega]|\\
&\leq (1+v)\epsilon+|(\lambda+\epsilon)^2-\lambda^2| \ |f[\lambda+\epsilon,\bar\omega]|+\lambda^2 |f[\lambda+\epsilon,\bar\omega]-f[\lambda,\omega]|\\
&\leq (1+v)\epsilon+C(\lambda\epsilon+\epsilon^2)+C\lambda^2(|\omega-\bar\omega|+\epsilon)\\
&\leq (1+v)\epsilon+C(\lambda\epsilon+\epsilon^2)+C\lambda^2|\omega-\bar\omega|.
\end{align*}
Choosing $\lambda_0$ small enough we can absorb $\lambda^2|\omega-\bar\omega|$ to the left to get $\lambda|\omega-\bar\omega|\leq C\epsilon$ and hence $\lambda^2 |\omega-\bar\omega|\leq C\lambda\epsilon$. Inserting into Estimate \eqref{FoliationCaseeq} gives
$$\lambda\geq\lambda+\epsilon(1-v-C\lambda-C\epsilon).$$
We have the estimates $\lambda\leq\lambda_0$ and $\epsilon\leq\lambda+\epsilon\leq \lambda_0$. Since $v\in[0,1)$ we can choose $\lambda_0$ so small that $1-v-2C\lambda_0>c_1(v)>0$. We then obtain $\lambda\geq \lambda+\epsilon c_1$ and hence $\epsilon=0$,
which implies the claim.\\

From now on let $\lambda_0(v)>0$ be so small such that the surfaces $\phi_\lambda$ are disjoint for $\lambda<\lambda_0$ and such that Lemma \ref{lemma1} is applicable. We claim that $\mathcal I_{\lambda_0}\backslash\set 0=\bigcup_{\lambda<\lambda_0}\operatorname{im}\phi_\lambda$.
 To prove this consider a point $0\neq p\in \mathcal I_{\lambda_0}$ and define $\theta_0:=\frac p{|p|}$. Lemma \ref{lemma1} equips us with a map $t:(0,\lambda_0]\rightarrow\R_0^ +$ such that $\R_0^ +\theta_0\cap S_\lambda=\set{t(\lambda)\theta_0}$.\\
 
 $t(\lambda_0)>|p|$. To see this, we first note $t\theta_0\in\mathcal E_\lambda$ for $t\rightarrow\infty$. Thus, $t\theta_0 \in \mathcal E_\lambda$ for all $t\geq t(\lambda_0)$. Now, if $t(\lambda_0)\leq |p|$ was true we would get $p=|p|\theta_0\in\mathcal E_\lambda$ but $p\in \mathcal I_{\lambda_0}$. 
 Since $S_\lambda$ shrinks to the origin as $\lambda\rightarrow 0^+$ we also deduce that $t(\lambda)\rightarrow 0$ as $\lambda\rightarrow 0^+$. The continuity of the map $t$ then gives $\lambda\in(0, \lambda_0)$ such that $t(\lambda)=|p|$. So $p=t(\lambda)\theta_0\in S_\lambda=\operatorname{im}(\phi_\lambda)$ for some $\lambda\in(0,\lambda_0)$.\\
 
We have shown that for all $p\in \mathcal I_{\lambda_0}\backslash\set 0$ we have a unique $\lambda\in(0,\lambda_0)$ such that $p\in S_\lambda$. In other words, the map 
$$\mathcal F:(0,\lambda_0)\times\Sp^2_+\rightarrow \mathcal I_{\lambda_0}\backslash\set 0,\ (\lambda,\omega)\mapsto \phi_\lambda(\omega)$$
is smooth and bijective. We claim that $\mathcal F$ is a diffeomorphism. Once this is shown, any atlas of $\Sp^2_+$ provides a foliated atlas of $\mathcal I_{\lambda_0}\backslash\set 0$. First we note that for any $\omega\in \Sp^2_+$ the vector $\omega+ve_1\not\in T_\omega\Sp^2_+$ as $\omega\cdot (\omega+ve_1)\geq 1-|v|>0$. So, the linear map 
$$\R\times T_\omega\Sp^2_+\rightarrow\R^3,\ (s, \xi)\mapsto s(\omega+ve_1)+ \xi\hspace{.5cm}\textrm{is a linear isomorphism}.$$
We compute
\begin{align*}
D\mathcal F(\lambda,\omega)(\lambda s,\xi)=&\lambda s(ve_1+\omega)+\lambda\xi +\lambda^2 D_2 f[\lambda,\omega]\xi+2\lambda^2 s f[\lambda,\omega]+\lambda^3 D_1 f[\lambda,\xi]s\\
=&\lambda \left[ s(ve_1+\omega)+\xi +\lambda \left(D_2 f[\lambda,\omega]\xi+2 s f[\lambda,\omega]+\lambda^2 D_1 f[\lambda,\xi]s\right)\right].
\end{align*}
Consequently, for $\lambda>0$ small enough, $D\mathcal F(\lambda,\omega):\R\times T_\omega\Sp^2_+\rightarrow \R^3$ is an isomorphism. So $\mathcal F$ is a local and, therefore, by bijectivity of $\mathcal F$, also a global diffeomorphism. 
 \end{proof}

Finally, we prove that for $v>1$, the surfaces $\phi_\lambda$ cannot provide a foliation of any deleted neighborhood of $0$. 

\begin{lemma}\label{follem2}
Let $v>1$ and $\phi_\lambda$ be as in Equation \eqref{abstractsurfaces}. Then, for any $\lambda_0>0$ there exist $\lambda_1>0$ and $\epsilon>0$ such that $0<\lambda_1<\lambda_1+\epsilon<\lambda_0$ and $\operatorname{im}\phi_{\lambda_1}\cap\operatorname{im}\phi_{\lambda_1+\epsilon}\neq\emptyset$. In particular, for any $\lambda_0>0$ the surfaces $(S_\lambda)_{\lambda\in(0,\lambda_0)}$ cannot form a foliation. 
\end{lemma}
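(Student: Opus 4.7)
The plan is as follows. For any $\lambda_0 > 0$ I want to exhibit $0 < \lambda_1 < \lambda_1 + \epsilon < \lambda_0$ and $\omega, \bar\omega \in \Sp^2_+$ with $\phi_{\lambda_1}(\omega) = \phi_{\lambda_1+\epsilon}(\bar\omega)$. My strategy is to fix a single small $t_0 > 0$ (depending only on $v$), prove via the implicit function theorem that the equation $\phi_\lambda(\omega) = \phi_{\lambda(1+t_0)}(\bar\omega)$ admits solutions for all sufficiently small $\lambda > 0$, and then set $\lambda_1 := \lambda$, $\epsilon := \lambda t_0$. The underlying intuition is the right-hand picture in Figure \ref{figure}: when $v > 1$, the unperturbed model half-spheres $\lambda v e_1 + \lambda \Sp^2_+$ already intersect for distinct values of $\lambda$, since their centers travel rightward at speed $v$ but their radii grow only at unit speed.

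First I would solve the model equation $\omega = t_0 v e_1 + (1+t_0)\bar\omega$ on $\Sp^2_+ \times \Sp^2_+$. Enforcing $|\omega|^2 = 1$ reduces this to the linear condition
$$\bar\omega_1 = -\frac{2 + t_0(1+v^2)}{2v(1+t_0)},$$
which lies strictly in $(-1, 0)$ as long as $t_0 \in (0, 2/(v-1))$. Fixing such a $t_0$, I would take $\bar\omega_0 := (\bar\omega_1, 0, \sqrt{1-\bar\omega_1^2})$ and $\omega_0 := t_0 v e_1 + (1+t_0)\bar\omega_0$. Both vectors lie in the open interior $\{\omega_3 > 0\}$ of $\Sp^2_+$, and since $\bar\omega_0 \neq \pm e_1$, the vectors $\omega_0$ and $\bar\omega_0$ are linearly independent in $\R^3$.

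Second, I would set up the IFT. Introducing
$$\tilde G(\lambda, \omega, \bar\omega) := -t_0 v e_1 + \omega - (1+t_0)\bar\omega + \lambda\bigl[f(\lambda,\omega) - (1+t_0)^2 f(\lambda(1+t_0),\bar\omega)\bigr],$$
the identity $\phi_\lambda(\omega) - \phi_{\lambda(1+t_0)}(\bar\omega) = \lambda\, \tilde G(\lambda, \omega, \bar\omega)$ shows that the equation of interest is $\tilde G = 0$, and by the previous paragraph $\tilde G(0,\omega_0,\bar\omega_0) = 0$. The partial derivative
$$D_{(\omega,\bar\omega)}\tilde G(0, \omega_0, \bar\omega_0)(\xi,\bar\xi) = \xi - (1+t_0)\bar\xi$$
maps $T_{\omega_0}\Sp^2_+ \oplus T_{\bar\omega_0}\Sp^2_+ = \omega_0^\perp \oplus \bar\omega_0^\perp$ onto $\omega_0^\perp + \bar\omega_0^\perp = \R^3$, where the last equality uses the linear independence secured above. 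Hence this differential is surjective with one-dimensional kernel.

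The main technical subtlety lies in the final step: since $D_{(\omega,\bar\omega)} \tilde G$ is surjective but not injective, I cannot apply the standard implicit function theorem directly. To remedy this I would pick any 3-dimensional submanifold $\Sigma \subset \Sp^2_+\times\Sp^2_+$ through $(\omega_0,\bar\omega_0)$ whose tangent space at that point is a complement of the one-dimensional kernel above; then the restriction of $\tilde G$ to $[0,\Lambda)\times\Sigma$ has invertible partial derivative at $(0, \omega_0, \bar\omega_0)$, and the classical IFT supplies a smooth branch $\lambda \mapsto (\omega(\lambda), \bar\omega(\lambda)) \in \Sigma$ with $\tilde G(\lambda,\omega(\lambda),\bar\omega(\lambda)) = 0$ on some interval $[0,\lambda^*)$. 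Given $\lambda_0 > 0$, choosing $\lambda_1 \in (0, \min(\lambda^*, \lambda_0/(1+t_0)))$ and $\epsilon := \lambda_1 t_0$ then yields $0 < \lambda_1 < \lambda_1 + \epsilon < \lambda_0$ together with $\phi_{\lambda_1}(\omega(\lambda_1)) = \phi_{\lambda_1+\epsilon}(\bar\omega(\lambda_1))$, proving the claim.
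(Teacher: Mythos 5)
Your proof is correct, and it takes a genuinely different route from the paper's. The paper's argument is topological: it fixes $\epsilon:=\lambda_1/(v-1)$, shows by explicit estimates that the two boundary points $\phi_{\lambda_1+\epsilon}(\pm e_1)$ lie respectively inside and outside the Jordan curve $\operatorname{im}\phi_{\lambda_1}\cap(\R^2\times 0)$, and then invokes the Jordan--Brouwer separation property from Lemma~\ref{foliationLemma00} to conclude that any arc in $\operatorname{im}\phi_{\lambda_1+\epsilon}$ joining those endpoints must cross $\operatorname{im}\phi_{\lambda_1}$ somewhere; the intersection point is obtained nonconstructively by an intermediate-value argument. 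Your route instead fixes a ratio $t_0\in(0,2/(v-1))$, solves the $\lambda\to 0$ model equation $\omega=t_0ve_1+(1+t_0)\bar\omega$ exactly on $\Sp^2_+\times\Sp^2_+$ (this is precisely where $v>1$ is needed, to put $\bar\omega_1$ in $(-1,0)$ and hence $(\omega_0,\bar\omega_0)$ in the interior), verifies $\omega_0,\bar\omega_0$ are independent so that $\omega_0^\perp+\bar\omega_0^\perp=\R^3$, and then continues the intersection to small $\lambda>0$ by the implicit function theorem after cutting down to a $3$-dimensional slice $\Sigma$ complementary to the one-dimensional kernel. Your computation of $\bar\omega_1$, the threshold $t_0<2/(v-1)$, and the surjectivity of $(\xi,\bar\xi)\mapsto\xi-(1+t_0)\bar\xi$ all check out, as does the factorization $\phi_\lambda(\omega)-\phi_{\lambda(1+t_0)}(\bar\omega)=\lambda\tilde G$. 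The trade-off is that your approach requires the extra (but standard) step of handling the non-injective differential via a transversal $\Sigma$, while producing a smooth explicit branch of intersection points in the interior of $\Sp^2_+$; the paper's approach avoids IFT bookkeeping but relies on the separation machinery of Lemma~\ref{foliationLemma00} and only locates the intersection implicitly. Two small points worth stating explicitly in a final write-up: applying the IFT on $[0,\Lambda)\times\Sigma$ involves a boundary at $\lambda=0$, which is harmless (extend $f$ smoothly to negative $\lambda$ or use a one-sided version); and it should be noted that for small $\lambda_1$ the two surfaces $\operatorname{im}\phi_{\lambda_1}$ and $\operatorname{im}\phi_{\lambda_1+\epsilon}$ are distinct (e.g.\ by comparing areas), so a nonempty intersection indeed obstructs the foliation property.
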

The following proof is best understood by recalling the `unperturbed' case we considered in Figure \ref{figure}.
\begin{proof}
Let $\lambda_0>0$ and $\lambda_1<\frac13\lambda_0$ to be chosen later. We put $\epsilon:=\frac{\lambda_1}{v-1}$ and assume that $\lambda_1(v)$ is chosen so small that $\lambda_1+\epsilon<\lambda_0$. For $\vartheta\in[0,1]$ we consider the curve $c_\vartheta:\partial \Sp^2_+\rightarrow\R^2$ defined by
$c_\vartheta(\omega):= \lambda_1 ve_1+\lambda_1\omega+f[\vartheta\lambda_1,\omega]$. Just as in the proof of Lemma \ref{foliationLemma00}, we see that $c_\vartheta$ is injective for small enough $\lambda_1$ and hence separates $\R^2$ into an interior domain $U(\vartheta)$ and an exterior domain $\tilde U(\vartheta)$. Letting
$$p_\pm(\lambda):=\phi_\lambda(\pm e_1)=\lambda ve_1\pm \lambda e_1+\lambda^2 f[\lambda,\pm e_1],$$
we wish to prove that for small enough $\lambda_1$ we get $p_-(\lambda_1+\epsilon)\in U(1)$.  In a first step we show $p_-(\lambda_1+\epsilon)\in U(0)=\lambda_1 ve_1+\lambda D_1(0)$ where $D_1(0):=\set{x\in\R^2\ | |x|<1}$. Indeed, for $\lambda_1(v)$ small enough  and recalling the definition of $\epsilon$, we compute 
\begin{align*} 
|p_-(\lambda_1+\epsilon)-\lambda_1 v e_1|&=|\epsilon ve_1-(\lambda_1+\epsilon)e_1+(\lambda_1+\epsilon)^2f[\lambda_1+\epsilon,-e_1]|\\
&\leq |\lambda_1+\epsilon-\epsilon v|+C(\lambda_1+\epsilon)^2\\
&\leq C(v)\lambda_1^2\\
&<\lambda_1.
\end{align*}
Next we prove that the distance of $p_-(\lambda_1+\epsilon)$ to the boundary $\partial U(\vartheta)=\operatorname{im}(c_\vartheta)$ is bounded from below uniformly for all $\vartheta\in[0,1]$. It then follows that $p_-(\lambda_1+\epsilon)\in U(1)$. Indeed, for any $\omega\in\partial\Sp^2_+$ and small enough $\lambda_1$ we estimate
\begin{align}
    &|c_\vartheta(\omega)-p_-(\lambda_1+\epsilon)|\nonumber\\
    =&|\lambda_1 ve_1+\lambda_1\omega +\vartheta^2\lambda_1^2f[\vartheta\lambda_1,\omega]-(\lambda_1+\epsilon) ve_1+(\lambda_1+\epsilon)e_1-(\lambda_1+\epsilon)^2f[\lambda_1+\epsilon,-e_1]|\nonumber\\
    \geq &|\lambda_1\omega -\epsilon ve_1 +(\lambda_1+\epsilon)e_1|-C(v)\lambda_1^2\nonumber\\
    \overset{(!)}=&|\lambda_1\omega|-C(v)\lambda_1^2\nonumber\\
    \geq & \frac{\lambda_1} 2.\label{Joradancurve_Positivedistance}
\end{align}
To verify $(!)$, we have used the definition of $\epsilon$. Assuming $p_-(\lambda_1+\epsilon)\not\in U(1)$, there would exist $\vartheta\in(0,1]$ such that $p_-(\lambda_1+\epsilon)\in \operatorname{im}(c_\vartheta)$, which is impossible by Estimate \eqref{Joradancurve_Positivedistance}. A similar argument shows $p_+(\lambda_1+\epsilon)\in\tilde U(1)$. 
Now let $\gamma:[0,1]\rightarrow\Sp^2_+$ be any continuous curve satisfying $\gamma(0)=-e_1$ and $\gamma(1)=e_1$ and consider $\Gamma(t):=\phi_{\lambda_1+\epsilon}(\gamma(t))$.
Then $\Gamma(0)=p_-(\lambda_1+\epsilon)\in U(0)\subset \mathcal I_{\lambda_1}$ and $\Gamma(1)=p_+(\lambda_1+\epsilon)\in \tilde U(1)\subset \mathcal E_{\lambda_1}$. By Lemma \ref{foliationLemma00} there exists $t_0\in(0,1)$ satisfying $\Gamma(t_0)\in S_{\lambda_1}$. Of course, we also have $\Gamma(t_0)\in S_{\lambda_1+\epsilon}$, which implies the Lemma.
\end{proof}
\setcounter{equation}{0}
\section{Computations}
In the following, we will often refer to machine computations. All of these have been executed using Mathematica \cite{mathematica} (see Appendix \ref{mathematicacodeAppendix}). We provide the commented code as supplementary material.

\subsection{General Formulas}\label{Formulas}
In this subsection, let $\phi:U\subset\R^2\rightarrow\Sp^2_+$ be a local parameterization. We consider a function $\varphi:\Sp^2_+\rightarrow\R$ that is usually in $C^{2,\alpha}(\Sp^2_+)$ but occasionally, for the Willmore case, even in $C^{4,\alpha}(\Sp^2_+)$ and put $f_\varphi:\Sp^2_+\rightarrow\R^3,\ f_\varphi(\omega):=(1+\varphi(\omega))\omega$. Let $q$ denote a smooth, symmetric matrix on $\R^3$ so that $\tilde g:=\delta+\epsilon q$  is a
background metric on $\R^3$ that is close to the euclidean metric $\delta$ when $\epsilon$ is small.  

\paragraph{First Fundamental Form}\ \\
The first fundamental form of $f_\varphi$ with respect to the background metric $\tilde g$ is given by
$$g_{ij}=g_{ij}[\varphi,\tilde g]=\tilde g((1+\varphi)\partial_i\phi+\partial_i\varphi\phi, (1+\varphi)\partial_j\phi+\partial_j\varphi\phi).$$
This gives the following two formulas:
\begin{align}
&D_1 g_{ij}[0,\delta]\varphi=2\varphi\langle\partial_i\phi,\partial_j\phi\rangle\label{FirstFFvariationPHI}\\
&D_1^2 g_{ij}[0,\delta](\varphi,\varphi)=2\left( \varphi^2\langle\partial_i\phi,\partial_j\phi\rangle+\partial_i\varphi\partial_j\varphi\right)\label{FirstFFSecondVariationPHI}\\
&D_2 g_{ij}[0,\delta]q=q(\partial_i\phi,\partial_j\phi)\label{SecondFFvariationPHI}
\end{align}
The induced surface measure $d\mu[\varphi,\tilde g]=\sqrt{\det g}d^2x$ then satisfies
\begin{align}
&D_1 d\mu[0,\delta]\varphi=2\varphi d\mu_{\Sp^2},\label{GrammVarPhi}\\
&D_1^2 d\mu[0,\delta](\varphi,\varphi)=\left(|\nabla \varphi|^2+2\varphi^2\right) d\mu_{\Sp^2},\label{GrammVarPhiSecond}\\
&D_2 d\mu[0,\delta]q=\frac12 g^{ij}q(\partial_i\phi,\partial_j\phi)d\mu_{\Sp^2}=\frac12\tr_{\Sp^2} q d\mu_{\Sp^2}.\label{GrammVarMet}
\end{align}

\noindent
\textit{Proof of Equation \eqref{GrammVarPhiSecond}}\ \\
We put $g:=g[\epsilon\varphi,\delta]$ and $g_0:=g[0,\delta]$ and compute 
\begin{align}
    \frac {d^2}{d\epsilon^2}\bigg|_{\epsilon=0}d\mu_g&=\frac {d^2}{d\epsilon^2}\bigg|_{\epsilon=0}\sqrt{\det g}d^2x\nonumber\\
    &=\frac {d}{d\epsilon}\bigg|_{\epsilon=0}\left[\frac1{2\sqrt{\det g}}\frac d{d\epsilon}\det g d^2x\right]\nonumber\\
    &=-\frac1{4\sqrt{\det g_0}^3}\left(\frac d{d\epsilon}\bigg|_{\epsilon=0}\det g\right)^2d^2x+\frac1{2\sqrt{\det g_0}}\frac {d^2}{d\epsilon^2}\bigg|_{\epsilon=0}\det gd^2x.\label{SecondMeasureVarioationproofeq01}
\end{align}
Using Equation \eqref{FirstFFvariationPHI}, we first compute
\begin{equation}\label{SecondMeasureVarioationproofeq02}
\frac d{d\epsilon}\bigg|_{\epsilon=0}\det g=\det (g_0)g_0^{ij}\frac{\partial g_{ij}}{\partial\epsilon}\bigg|_{\epsilon=0}
=\det (g_0)g_0^{ij}2\varphi\cdot  (g_0)_{ij}=4\det (g_0) \varphi.
\end{equation}
By Equation \eqref{FirstFFvariationPHI} we have $g=(1+2\epsilon\varphi)g_0+\mathcal O(\epsilon^2)$. Using also Equation \eqref{FirstFFSecondVariationPHI}, we get
\begin{align}
\frac{d^2}{d\epsilon^2}\bigg|_{\epsilon=0}\det g&=\det(g_0)g_0^{ij}\frac{\partial^2 g_{ij}}{\partial\epsilon^2}\bigg|_{\epsilon=0}+\frac{d^2}{d\epsilon^2}\bigg|_{\epsilon=0}\det\left[g_0+\epsilon \frac{\partial g}{\partial \epsilon}\bigg|_{\epsilon=0}\right]\nonumber\\
&=\det(g_0)g_0^{ij}2\left(\varphi^2\cdot (g_0)_{ij}+\partial_i\varphi\partial_j\varphi\right)+\frac{d^2}{d\epsilon^2}\bigg|_{\epsilon=0}\det((1+2\epsilon\varphi)g_0)\nonumber\\
&=\det(g_0)\left(4\varphi^2+2|\nabla\varphi|^2\right)+\frac{d^2}{d\epsilon^2}\bigg|_{\epsilon=0}(1+2\epsilon\varphi)^2\det(g_0)\nonumber\\
&=\det(g_0)\left(12\varphi^2+2|\nabla\varphi|^2\right).\label{SecondMeasureVarioationproofeq03}
\end{align}
Putting Equations \eqref{SecondMeasureVarioationproofeq02} and \eqref{SecondMeasureVarioationproofeq03} into Equation \eqref{SecondMeasureVarioationproofeq01}, we deduce Equations \eqref{GrammVarPhiSecond}.\qed

\paragraph{Normal}\ \\
Let $\tilde \nu[\varphi,\tilde g]$ denote the interior unit normal along $f_\varphi$ with respect to the background metric $\tilde g$. The following formulas hold:
\begin{align}
    &D_1 \tilde\nu[0,\delta]\varphi=g^{ij}\partial_i\varphi\partial_j\phi\label{NormalVariationPHI}\\
    &\langle D_1^2\tilde\nu[0,\delta](\varphi,\varphi),\omega\rangle=|\nabla\varphi|^2\label{NormalSecondVariationPHI}\\
    &D_2 \tilde\nu[0,\delta]q=g^{ij}q(\phi,\partial_i\phi)\partial_j\phi+\frac12 q(\phi,\phi)\phi\label{NormalVariationGeneralMET}
\end{align}
For a proof of Equation \eqref{NormalVariationGeneralMET} we refer to the proof of Lemma 7 in \cite{AK}.\\

\noindent
\textit{Proof of Equations \eqref{NormalVariationPHI} and \eqref{NormalSecondVariationPHI}}\ \\
To prove Equation \eqref{NormalVariationPHI}, we note that 
$$\langle\tilde \nu[\epsilon\varphi,\delta],\tilde\nu[\epsilon\varphi,\delta]\rangle\equiv 1\hspace{.5cm}\textrm{and so}\hspace{.5cm}\langle\tilde\nu[0,\delta], \frac{\partial}{\partial\epsilon}\bigg|_{\epsilon=0}\tilde\nu[\epsilon\varphi,\delta]\rangle=0.$$
Hence $D_1\tilde\nu[0,\delta]\varphi$ is tangential. Recall that $\tilde\nu[0,\delta]=-\omega=-\phi$. We have 
$$\langle\tilde \nu[\epsilon\varphi,\delta],\partial_i f_{\epsilon\varphi}\rangle\equiv 0\hspace{.5cm}\textrm{and so}\hspace{.5cm}\langle\frac{\partial}{\partial\epsilon}\bigg|_{\epsilon=0}\tilde\nu[\epsilon\varphi,\delta], \partial_i\phi\rangle=\langle\phi, \partial_i\varphi\phi+\varphi\partial_i\phi\rangle=\partial_i\varphi.$$
This is Equation \eqref{NormalVariationPHI}. To get Equation \eqref{NormalSecondVariationPHI}, we differentiate the identity $\langle\tilde\nu[\epsilon\varphi,\delta],\tilde\nu[\epsilon\varphi,\delta]\rangle\equiv~1$ twice with respect to $\epsilon$ and evaluate at $\epsilon=0$. Using $\nu[0,\delta]=-\phi$  and Equation \eqref{NormalVariationPHI}, we get
\begin{align*} 
2\langle\frac{\partial^2}{\partial\epsilon^2}\bigg|_{\epsilon=0}\tilde\nu[\epsilon\varphi,\delta], \phi\rangle
=&2\langle\frac{\partial}{\partial\epsilon}\bigg|_{\epsilon=0}\tilde\nu[\epsilon\varphi,\delta],\frac{\partial}{\partial\epsilon}\bigg|_{\epsilon=0}\tilde\nu[\epsilon\varphi,\delta]\rangle\\
=&2g^{ab} g^{kl}\partial_a\varphi\partial_k\varphi\langle\partial_b\phi,\partial_l\phi\rangle\\
=&g^{ak}\partial_a\varphi\partial_l\varphi\\
=&2|\nabla\varphi|^2.&\qed
\end{align*}

\paragraph{Mean curvature}\ \\ 
The mean curvature satisfies
\begin{align} 
D_1 H[0,\delta]\varphi&=-(\Delta+2)\varphi,\label{HlinearizationPHI}\\
D_1^2 H[0,\delta](\varphi,\varphi)&=4\varphi\Delta\varphi+4\varphi^2,\label{HSecondVariationPHI}\\
D_2 H[0,\delta]q&=\frac12\tr_{\Sp^2}\nabla_{\ \omega} q-\tr_{\Sp^2}\nabla_\cdot q(\omega ,\cdot)+q(\omega,\omega)-\tr_{\Sp^2}q.\label{HlinearizationGeneralMET}
\end{align}
Equation \eqref{HlinearizationGeneralMET} is to be understood as $(D_2 H[0,\delta] q)(\omega)=...$ and is established in the proof of Lemma 7 in \cite{AK}.\\

\noindent
\textit{Proof of Equations \eqref{HlinearizationPHI} and \eqref{HSecondVariationPHI}}\ \\
We assume that $\phi$ is an almost euclidean parameterization of $\Sp^2_+$. That is, it satisfies $g_{ij}(0)=\langle\partial_i\phi(0),\partial_j\phi(0)\rangle=\delta_{ij}$ and $\partial_i g_{ab}(0)=0$. It follows that $\langle\partial_{ij}\phi,\partial_b\phi\rangle(0)=0$ and hence 
$$\partial_{ij}\phi(0)=\langle\partial_{ij}\phi,\phi\rangle(0)\phi(0)=-\langle\partial_i\phi,\partial_j\phi\rangle(0)\phi(0)=-\delta_{ij}\phi(0).$$
In all of the following computations, we will suppress `$(0)$' in the notation. Using Equation \eqref{FirstFFvariationPHI}, we compute
\begin{equation}\label{HLINEq1}
\frac{\partial g^{ij}[\epsilon\varphi,\delta]}{\partial\epsilon}\bigg|_{\epsilon=0}=-g^{ia}[0,\delta]g^{jb}[0,\delta]\frac{\partial g_{ab}[\epsilon\varphi,\delta]}{\partial\epsilon}\bigg|_{\epsilon=0}=-2\varphi \delta_{ij}.
\end{equation}
Next, using $\partial_{ij}\phi=-\delta_{ij}\phi$, $\langle\phi,\partial_i\phi\rangle=0$, $\tilde\nu[0,\delta]=-\phi$ and Equation \eqref{NormalVariationPHI} we compute
\begin{align} 
\frac{\partial}{\partial\epsilon}\bigg|_{\epsilon=0}\langle\partial_{ij}[(1+\epsilon\varphi)\phi],\tilde\nu[\epsilon\varphi,\delta]\rangle=&\langle\partial_{ij}\varphi\phi+\varphi\partial_{ij}\phi+\partial_i\varphi\partial_j\phi+\partial_j\varphi\partial_i\phi, -\phi\rangle+\langle\partial_{ij}\phi, \partial_a\varphi\partial_a\phi\rangle\nonumber\\
=&-\partial_{ij}\varphi+\delta_{ij}\varphi.\label{HLINEq2}
\end{align}
Due to the almost euclidean coordinates, we have $\partial_{ii}\varphi=\Delta\varphi$. Combining this with Equations \eqref{HLINEq1} and \eqref{HLINEq2}, we compute 
\begin{align*}
    \frac{\partial}{\partial\epsilon}\bigg|_{\epsilon=0}H[\epsilon\varphi,\delta]=& \frac{\partial}{\partial\epsilon}\bigg|_{\epsilon=0}\bigg [g^{ij}[\epsilon\varphi,\delta]\langle\partial_{ij}[(1+\epsilon\varphi)\phi],\tilde\nu[\epsilon\varphi,\delta]\rangle\bigg]\\
    =&\frac{\partial g^{ij}[\epsilon\varphi,\delta]}{\partial\epsilon}\bigg|_{\epsilon=0}\langle\partial_{ij}\phi,-\phi\rangle+\delta_{ij}\left(-\partial_{ij}\varphi+\delta_{ij}\varphi\right)\\
    =&-2\varphi\delta_{ij}\delta_{ij}-\partial_{ii}\varphi+2\varphi\\
    =&-(\Delta+2)\varphi.
\end{align*}
Thus Equation \eqref{HlinearizationPHI} is established. To establish Equation \eqref{HSecondVariationPHI}, we compute
\begin{align}
    \frac{\partial^2}{\partial\epsilon^2}\bigg|_{\epsilon=0} H[\epsilon\varphi,\delta]=&\frac{\partial^2 g^{ij}[\epsilon\varphi,\delta]}{\partial\epsilon^2}\bigg|_{\epsilon=0}\langle\partial_{ij}\phi,-\phi\rangle
    +
    \delta_{ij}\frac{\partial^2}{\partial\epsilon^2}\bigg|_{\epsilon=0}\langle\partial_{ij}\left[(1+\epsilon\varphi)\phi\right],\tilde\nu[\epsilon\varphi,\delta]\rangle\nonumber\\
    &+2\frac{\partial g^{ij}[\epsilon\varphi,\delta]}{\partial\epsilon}\bigg|_{\epsilon=0}\frac{\partial}{\partial\epsilon}\bigg|_{\epsilon=0}\langle\partial_{ij}\left[(1+\epsilon\varphi)\phi\right],\tilde\nu[\epsilon\varphi,\delta]\rangle.\label{HLINEq3}
\end{align}
First, combining Equations \eqref{HLINEq1} and \eqref{HLINEq2} we get 
\begin{equation}\label{HLINEq4}
    2\frac{\partial g^{ij}[\epsilon\varphi,\delta]}{\partial\epsilon}\bigg|_{\epsilon=0}\frac{\partial}{\partial\epsilon}\bigg|_{\epsilon=0}\langle\partial_{ij}\left[(1+\epsilon\varphi)\phi\right],\tilde\nu[\epsilon\varphi,\delta]\rangle
    =
    -4\varphi\delta_{ij}(\delta_{ij}\varphi-\partial_{ij}\varphi)
    =
    -8\varphi^2+4\varphi\Delta\varphi.
\end{equation}
We note that $\langle\partial_{ij}\phi,-\phi\rangle=\delta_{ij}$ and that $g_{ij}[0,\delta]=\delta_{ij}$. Using these relations and Equations \eqref{FirstFFvariationPHI} and \eqref{FirstFFSecondVariationPHI} we write
\begin{align}
    \frac{\partial^2 g^{ij}[\epsilon\varphi,\delta]}{\partial\epsilon^2}\bigg|_{\epsilon=0}\langle\partial_{ij}\phi,-\phi\rangle
    =&
    \delta_{ij}\left[2\left(\frac{\partial g[\epsilon\varphi,\delta]}{\partial\epsilon}\bigg|_{\epsilon=0}\right)^2_{ij}-\frac{\partial^2g_{ij}[\epsilon\varphi,\delta]}{\partial\epsilon^2}\right]\nonumber\\
    =&\delta_{ij}(8\varphi^2\delta_{ij}-2\varphi^2\delta_{ij}-2\partial_i\varphi\partial_j\varphi)\nonumber\\
    =&12\varphi^2-2|\nabla\varphi|^2.\label{HLINEq5}
\end{align} 
Finally, we note $\partial_\epsilon^2 f_{\epsilon\varphi}=0$, $\langle\phi,\partial_i\phi\rangle=0$ and $\partial_{ij}\phi=-\delta_{ij}\phi$. Using these formulas and Equations \eqref{NormalVariationPHI} and \eqref{NormalSecondVariationPHI}, we compute
\begin{align}
    \delta_{ij}\frac{\partial^2}{\partial\epsilon^2}\bigg|_{\epsilon=0}\langle\partial_{ij}[(1+\epsilon\varphi)\phi],\tilde\nu[\epsilon\varphi,\delta]\rangle=&
    2\langle\partial_{ii}(\varphi\phi),D_1\tilde\nu[0,\delta]\varphi\rangle+\langle\partial_{ii}\phi, D_1^2\tilde\nu[0,\delta](\varphi,\varphi)\rangle\nonumber\\
    =&
    2\langle\partial_{ii}\varphi\phi+\varphi\partial_{ii}\phi+2\partial_i\varphi\partial_i\phi,\partial_a\varphi\partial_a\phi\rangle-\delta_{ii}\langle\phi, D_1^2\tilde\nu[0,\delta](\varphi,\varphi)\rangle\nonumber\\
    =&
    2\langle2\partial_i\varphi\partial_i\phi,\partial_a\varphi\partial_a\phi\rangle-\delta_{ii}\langle\phi, D_1^2\tilde\nu[0,\delta](\varphi,\varphi)\rangle\nonumber\\
    =&
   4|\nabla\varphi|^2-2|\nabla\varphi|^2\nonumber\\
   =&2|\nabla\varphi|^2.\label{HLINEq6}
\end{align}
Inserting Equations \eqref{HLINEq4}, \eqref{HLINEq5} and \eqref{HLINEq6} into Equation \eqref{HLINEq3} we get
$$ \frac{\partial^2}{\partial\epsilon^2}\bigg|_{\epsilon=0} H[\epsilon\varphi,\delta]=12\varphi^2-2|\nabla\varphi|^2+2|\nabla\varphi|^2-8\varphi^2+4\varphi\Delta\varphi=4(\varphi\Delta\varphi+\varphi^2),$$
which is precisely Equation \eqref{HSecondVariationPHI}. \hfill\qed\\

\paragraph{Willmore Energy and Willmore Operator}\ \\
Equations \eqref{GrammVarPhi}, \eqref{GrammVarMet}, \eqref{HlinearizationPHI}, \eqref{HlinearizationGeneralMET} and $H[0,\delta]=2$ imply
\begin{align}
    D_2 \mathcal W[0,\delta]q&=\int_{\Sp^2_+}-\frac12\tr_{\Sp^2}q+q(\omega,\omega)-\tr_{\Sp^2}\nabla_\cdot q(\cdot,\omega)+\frac 12\tr_{\Sp^2}\nabla_\omega q d\mu_{\Sp^2},\label{WillEnDerq}\\
      D_1\mathcal W[0,\delta]\varphi&=-\int_{\Sp^2_+}\Delta \varphi d\mu_{\Sp^2}=\int_{\partial\Sp^2_+}\frac{\partial\varphi}{\partial\eta}dS\label{WillEnDeru}.
\end{align}
The last formula follows from Gauß's theorem and by recalling that $\eta=\tilde\eta[0,\delta]$ is the interior normal along $\partial\Sp^2_+$. Considering the definition of the Willmore operator in Equation \eqref{Willmoreoperatordefinition}, $H[0,\delta]=2$, $h^0[0,\delta]=0$ and $\operatorname{Ric}_\delta=0$, we can use Equations \eqref{HlinearizationPHI} and \eqref{HlinearizationGeneralMET} to get
\begin{align}
D_1 W[0,\delta]\varphi&=-\frac12\Delta(\Delta+2)\varphi,\label{DWphi}\\
D_2 W[0,\delta]q&=\frac12\Delta\left(\frac12\tr_{\Sp^2}\nabla_{\ \omega} q-\tr_{\Sp^2}\nabla_\cdot q(\omega ,\cdot)+q(\omega,\omega)-\tr_{\Sp^2}q\right)+(D \operatorname{Ric}[\delta] q)(\omega,\omega).\label{DWq}
\end{align}
The last term in Equation \eqref{DWq} is not of interest to us since all considered metrics are flat.

\paragraph{Area, Barycenter and Volume}\ \\
Equations \eqref{GrammVarPhi} and \eqref{GrammVarMet} imply
\begin{equation}\label{AreaVariationPHI}
 D_1 A[0,\delta]\varphi=2\int_{\Sp^2_+}\varphi d\mu_{\Sp^2}
 \hspace{.5cm}\textrm{and}\hspace{.5cm}
 D_2 A[0,\delta] q=\frac12\int_{\Sp^2_+}\tr_{\Sp^2} q d\mu_{\Sp^2}.
\end{equation}
For the variation of the barycenter, we refer to Equation (5.4) in \cite{AK}. Here the $L^2$-gradient with respect to the inwards pointing normal is shown to be $-\frac{3}{2\pi}\omega_i$. Since $\frac\partial{\partial\epsilon} f_{\epsilon\varphi}=\varphi\omega=-\varphi\nu[0,\delta]$, this implies 
\begin{equation}\label{BarycenterVariationPHI}
D_1 C^i[0,\delta]\varphi=\frac3{2\pi}\int_{\Sp^2_+}\omega_i\varphi d\mu_{\Sp^2}\hspace{.5cm}\textrm{for $i=1,2$}.
\end{equation}
Denote by $\Omega_\varphi$ the domain in $\R^3_+$ that is bounded by the graph of $f_\varphi$. So in particular $\Omega_0=B_1^+(0)$ Then, the volume functional is defined as
\begin{equation}\label{volumefunctionalDef}
V[\varphi,\tilde g]=\int_{\Omega_\varphi}\sqrt{\det{\tilde g}} d^3x
\hspace{.5cm}\textrm{and in particular}\hspace{.5cm}
V[0,\tilde g]=\int_{B_1^+(0)}\sqrt{\det{\tilde g}}d^3x.
\end{equation}
The following formulas hold:
\begin{align}
    &D_1 V[0,\delta]\varphi=\int_{\Sp^2_+}\varphi d\mu_{\Sp^2}\hspace{.5cm}\textrm{and}\hspace{.5cm}D_1^2V[0,\delta](\varphi,\varphi)=2\int_{\Sp^2_+}\varphi^2 d\mu_{\Sp^2}\label{VolumeVariationPHI}\\
    &D_2 V[0,\delta] q=\frac12\int_{B_1^+(0)}\tr_{\R^3} q d^3x\label{VolumeVariationGeneralMET}
\end{align}
Equation \eqref{VolumeVariationGeneralMET} follows the second formula in Equation \eqref{volumefunctionalDef}. To get the two identities from Equation \eqref{VolumeVariationPHI} we use that for $\|\varphi\|_{C^{2,\alpha}(\Sp^2_+)}$ small enough and $\tilde g=\delta$ we have 
$$V[\varphi,\delta]=\int_{\Sp^2_+}\int_0^{1+\varphi(\omega)}r^2 dr d\mu_{\Sp^2}=\frac13 \int_{\Sp^2_+}(1+\varphi(\omega))^3 d\mu_{\Sp^2}.$$

\paragraph{Boundary Conditions}\ \\
The boundary operator from Equation \eqref{BoundaryOperatorDefinition} is
$$B[\varphi,\tilde g]
=(B_1[\varphi,\tilde g],B_2[\varphi,\tilde g])
:=\left(\omega_3-g^{ij}\tilde g(\omega,\partial_i f_\varphi)\partial_j f_\varphi^3, \frac{\partial H}{\partial\tilde \eta}+H\tilde h^{\R^2}(\tilde \nu,\tilde \nu)\right).$$
We denote by $\eta:=\tilde\eta[0,\delta]$. Using that $\nabla\omega_3=\eta$ along $\partial\Sp^2_+$ and $\langle\omega,\partial_i f_0\rangle=0$, it is readily checked that 
\begin{align}
    D_1 B_1[0,\delta]\varphi&=- g^{ij}\langle \omega,\partial_i \varphi \omega\rangle \partial_j  \phi_3=- g^{ij} \partial_i \varphi\partial_j \phi_3=-\frac{\partial\varphi}{\partial\eta},\label{B1Derphi}\\
    D_2 B_1[0,\delta]q&=-g^{ij} q(\omega,\partial_i\phi)\partial_j\phi_3=-g^{ij}q_{\mu\nu}\partial_i\phi_\mu\phi_\nu\partial_j\phi_3=-q_{a3}\omega_a.\label{B1Derq}
\end{align}

 Next, we repeat the same analysis for the second component of $B$. First we use $\tilde h^{\R^2}\equiv 0$ for $\tilde g=\delta$, $H[0,\delta]=2$ and Equation \eqref{HlinearizationPHI} to get 
\begin{align}
    D_1 B_2[0,\delta]\varphi&=\frac{\partial}{\partial\eta}D_1 H[0,\delta]\varphi=-\frac{\partial}{\partial\eta}(\Delta+2)\varphi,\label{B2VariationPHI}\\
    D_2 B_2[0,\delta]q&=\frac{\partial}{\partial\eta}\left(D_2 H[0,\delta]q\right)+H[0,\delta]\tilde \nu[0,\delta]^i\tilde \nu[0,\delta]^j D_2 \tilde h^{\R^2}_{ij}[\delta]q.\label{B2VariationGeneralMETVersion0}
\end{align}
Denoting the interior normal (that is pointing into $\R^3_+$) along $\R^2\times 0$ by $\tilde\nu_{\R^2}$ we have to consider
$$\tilde h^{\R^2}_{ij}=\tilde g(\nabla^{\tilde g}_{e_i}e_j, \tilde\nu_{\R^2})=\tilde g_{\alpha\beta} \Gamma^\alpha_{\ ij}\tilde\nu^\beta$$
where $\Gamma^\alpha_{\ ij}$ denote the Christoffel Symbols with respect to $\tilde g$. As $\Gamma^\alpha_{\ ij}\equiv 0$ and $\tilde\nu^{\R^2}=e_3$ for $\tilde g=\delta$ we have
$$D_2 \tilde h^{\R^2}_{ij}[\delta]q=\frac12(\partial_i q_{j3}+\partial_j q_{i3}-\partial_3 q_{ij}).$$
Inserting this and Equation \eqref{HlinearizationGeneralMET} into Equation \eqref{B2VariationGeneralMETVersion0} and using $\tilde\nu[0,\delta]=-\omega$ as well as $H[0,\delta]=2$ gives
\begin{align} 
D_2 B_2[0,\delta]q=&\frac{\partial}{\partial\eta}\left(
\frac12\tr_{\Sp^2}\nabla_\omega q-\tr_{\Sp^2}\nabla_\cdot q(\omega,\cdot)+q(\omega,\omega)-\tr_{\Sp^2} q
\right)\nonumber\\
&+\omega_i\omega_j(\partial_i q_{j3}+\partial_j q_{i3}-\partial_3 q_{ij})\label{B2VariationGeneralMET}.
\end{align}

\subsection{Formulas for the Concrete Background Metric}
In this subsection, we evaluate the formulas from Subsection \ref{Formulas} for the concrete choice 
\begin{equation}\label{MetricDerivativeFormula}
q:=\frac{\partial \tilde g^{a,\lambda}}{\partial\lambda}\bigg|_{\lambda=0}=\begin{bmatrix}
 & & \kappa_1 x_1\\
 & & \kappa_2 x_2\\
\kappa_1 x_1 & \kappa_2 x_2 & 0
\end{bmatrix}
\end{equation}
which we have already encountered in Equation \eqref{metricderivativeformula}.
\paragraph{Area, Barycenter and Volume}\ \\
Observing Equation \eqref{MetricDerivativeFormula} we see that $\operatorname{tr}_{\R^3}q=0$. So, Equation \eqref{AreaVariationPHI} implies
\begin{align}
D_2 A[0,\delta]q=&\frac12\int_{\Sp^2_+}\operatorname{tr}_{\R^3} q-q(\omega,\omega)d\mu_{\Sp^2}\nonumber\\
=&-\frac12\int_{\Sp^2_+} 2(\kappa_1\omega_1^2+\kappa_2\omega_2^2)\omega_3d\mu_{\Sp^2}\nonumber\\
=&-(\kappa_1+\kappa_2)\int_{\Sp^2_+}\omega_1^2\omega_3 d\mu_{\Sp^2}\nonumber\\
=&-\frac\pi4 H^S(a).\label{D2AEndResult}
\end{align}
Again using that $\tr_{\R^3}q=0$ we can use Equation \eqref{VolumeVariationGeneralMET} to deduce
\begin{equation}\label{VolumeVariationConcreteMET}
D_2 V[0,\delta]q=\frac12\int_{B_1^+(0)}\tr_{\R^3} qd^3 x=0.
\end{equation}
Finally, we refer to Lemma 6.1 in \cite{Metsch} for the following result: 
\begin{equation}\label{BarycenterVariationConcreteMET}
D_2 C^i[0,\delta] q=0\hspace{.5cm}\textrm{for $i=1,2$}
\end{equation}

\paragraph{Mean Curvature and Willmore Operator}\ \\
Following the proof of Lemma 9 in \cite{AK} we get $\tr_{\Sp^2}\nabla_\omega q =\tr_{\Sp^2}q=-q(\omega,\omega)$ and $\tr_{\Sp^2} \nabla_\cdot q(\omega,\cdot)=\omega_3 H^S(a)-q(\omega,\omega)$. Inserting into Equation \eqref{HlinearizationGeneralMET} gives
\begin{equation}\label{HlinearizationMET} 
D_2 H[0,\delta]q
=\frac52 q(\omega,\omega)-\omega_3 H^S(a)=5 (\kappa_1\omega_1^2+\kappa_2\omega_2^2)\omega_3 -\omega_3 H^ S(a).
\end{equation}
Note that $\tilde g^{a,\lambda}$ is a scaled pullback of the euclidean metric and hence flat. Thus $\operatorname{Ric}_{\tilde g^{a,\lambda}}\equiv 0$. Combining this observation with Equations \eqref{DWq} and \eqref{HlinearizationMET} implies
\begin{align}
D_2 W[0,\delta]q&=\frac12\Delta\left(5 (\kappa_1\omega_1^2+\kappa_2\omega_2^2)\omega_3-H^S(a)\omega_3\right).\label{DWtildegprime}
\end{align}

\paragraph{Boundary Operator}\ \\
Inserting $q$ from Equation \eqref{MetricDerivativeFormula} into Equation \eqref{B1Derq} gives
\begin{equation} 
D_2 B_1[0,\delta]q=-q_{13}\omega_1-q_{23}\omega_2=-(\kappa_1\omega_1^2+\kappa_2\omega_2^2)\label{B1VariationMET}.
\end{equation}
 Note that $\partial_i q_{j3}=\delta_{ij}\kappa_i$ and that $\partial_\eta\omega_3=1$. Inserting into Equation \eqref{B2VariationGeneralMET} gives 
\begin{align}
D_2 B_2[0,\delta]\tilde g'(0)
=&
\frac{\partial}{\partial\eta}\left(
5(\kappa_1\omega_1^2+\kappa_2\omega_2^2)\omega_3-H^S(a)\omega_3
\right)
+2(\kappa_1\omega_1^2+\kappa_2\omega_2^2)\nonumber\\
=&
7 (\kappa_1\omega_1^2+\kappa_2\omega_2^2)-H^ S(a).\label{B2VariationMETEndResult}
\end{align}

\subsection{Explicit Solution - The CMC Case}\label{CMCExplicitSolution}
We use the notation from Subsection \ref{CMCOutline}. By definition, the unique solution $u(\lambda)\in C^{2,\alpha}(\Sp^2_+)$ and the Lagrange parameters $\alpha(\lambda)$ and $\beta_i(\lambda)$ satisfy the following problem: 
\begin{align}
&H[u(\lambda),\tilde g(\lambda)]=\alpha(\lambda)+\beta_i(\lambda)\nabla C^i[u(\lambda),\tilde g(\lambda)],\label{CMCPDE}\\
&B_1[u(\lambda),\tilde g(\lambda)]=0,\label{CMCBC}\\
&V[u(\lambda),\tilde g(\lambda)]=\frac{2\pi}3,\label{CMCVolume}\\
&C^i[u(\lambda),\tilde g(\lambda)]=0\label{CMCBarycenter}\hspace{.3cm}\textrm{for }i=1,2.
\end{align}
We derive a linear PDE for $u'(0)$ by differentiating these Equations at $\lambda=0$. First, we differentiate Equations \eqref{CMCVolume} and \eqref{CMCBarycenter}. To do so, we use Equations \eqref{BarycenterVariationPHI}, \eqref{VolumeVariationPHI}, \eqref{VolumeVariationConcreteMET} and \eqref{BarycenterVariationConcreteMET} to get
\begin{align}
&\int_{\Sp^2_+}u'(0) d\mu_{\Sp^2}=D_1 V[0,\delta] u'(0)=-D_2 V[0,\delta]\tilde g'(0)=0,\label{LinearizedCMCVol}\\
&\frac3{2\pi}\int_{\Sp^2_+}u'(0)\omega_i d\mu_{\Sp^2}=D_1 C^i[0,\delta]u'(0)=-D_2 C^i[0,\delta]\tilde g'(0)=0\label{LinearizedCMCBarycenter}.
\end{align}
As we need it elsewhere, we note that in particular, Equations \eqref{AreaVariationPHI} and \eqref{LinearizedCMCVol} also imply
\begin{equation}\label{CMCAreaLinearizedPHI}
D_1 A[0,\delta] u'(0)=2\int_{\Sp^2_+} u'(0) d\mu_{\Sp^2}=0.
\end{equation}
Next, we differentiate Equation \eqref{CMCBC}. To do so, we use Equations \eqref{B1Derphi} and \eqref{B1VariationMET} to get
\begin{equation}\label{LinearizedCMCBoundaryCondition}
\frac{\partial u'(0)}{\partial\eta}=-D_1 B_1[0,\delta] u'(0)=D_2 B_1[0,\delta]\tilde g'(0)=-\kappa_1\omega_1^2-\kappa_2\omega_2^2.
\end{equation}
Finally, we differentiate Equation \eqref{CMCPDE}. To do so, we need Equations \eqref{HlinearizationPHI} and \eqref{HlinearizationMET}. Additionally, we use that $\beta_i(0)=0$ and the formula for $\nabla C^i[0,\delta]$ discussed before Equation \eqref{BarycenterVariationPHI} to deduce 
\begin{align}
(\Delta+2)u'(0)=&-D_1 H[0,\delta] u'(0)\nonumber\\
        =&D_2H[0,\delta]\tilde g'(0)-\alpha'(0)-\beta_i'(0)\nabla C^i[0,\delta]\nonumber\\
        =&5(\kappa_1\omega_1^2+\kappa_2\omega_2^2)\omega_3-H^S(a)\omega_3-\alpha'(0)+\frac{3}{2\pi}\beta_i'(0)\omega_i\label{CMCLinearizedPDE1}.
\end{align}
To obtain a formula for $\alpha'(0)$, we integrate. Using Equations \eqref{LinearizedCMCVol} and \eqref{LinearizedCMCBoundaryCondition}, we get
\begin{align} 
&-2\pi\alpha'(0)+5(\kappa_1+\kappa_2)\int_{\Sp^2_+}\omega_1^2\omega_3 d\mu_{\Sp^2}-H^S(a)\int_{\Sp^2_+}\omega_3 d\mu_{\Sp^2}\nonumber\\
=&\int_{\Sp^2_+}\Delta u'(0) d\mu_{\Sp^2}+2\int_{\Sp^2_+} u'(0)d\mu_{\Sp^2}\nonumber\\
=&-\int_{\partial\Sp^2_+}\frac{\partial u'(0)}{\partial\eta}dS\nonumber\\
=&(\kappa_1+\kappa_2)\int_{\partial\Sp^2_+}\omega_1^2 dS\nonumber\\
=&\pi H^S(a).\label{CMCalphacomputation}
\end{align}
By e.g. using spherical coordinates, it is easy to work out
$$\int_{\Sp^2_+}\omega_1^2\omega_3 d\mu_{\Sp^2}=\frac\pi4
\hspace{.5cm}\textrm{and}\hspace{.5cm}
\int_{\Sp^2_+}\omega_3 d\mu_{\Sp^2}=\pi.
$$
Inserting into Equation \eqref{CMCalphacomputation} implies 
\begin{equation}\label{CMCalphaResult}
-2\pi\alpha'(0)+\frac{5\pi}4 H^S(a)-\pi H^S(a)=\pi H^S(a)
\hspace{.5cm}\textrm{and so}\hspace{.5cm}
\alpha'(0)=-\frac38H^S(a).
\end{equation}
Multiplying Equation \eqref{CMCLinearizedPDE1} with $\omega_1$, integrating and using Gauß's theorem gives 
\begin{align*}
&\frac3{2\pi}\beta_1'(0)\int_{\Sp^2_+}\omega_1^2 d\mu_{\Sp^2}\\
=&2\int_{\Sp^2_+}u'(0)\omega_1 d\mu_{\Sp^2}+\int_{\Sp^2_+}\Delta u'(0)\omega_1 d\mu_{\Sp^2}\\
=&2\int_{\Sp^2_+}u'(0)\omega_1 d\mu_{\Sp^2}-\int_{\partial\Sp^2_+}\frac{\partial u'(0)}{\partial\eta}\omega_1-u'(0)\frac{\partial\omega_1}{\partial\eta} dS+\int_{\Sp^2_+} u'(0)\Delta \omega_1 d\mu_{\Sp^2}.
\end{align*}
Using $\partial_\eta\omega_1=0$, $\Delta\omega_1=-2\omega_1$ and Equation \eqref{LinearizedCMCBoundaryCondition} shows $\beta_1'(0)=0$ and similarly $\beta_2'(0)=0$. Inserting this and Equation \eqref{CMCalphaResult} into Equation \eqref{CMCLinearizedPDE1} as well as recalling Equations \eqref{LinearizedCMCBarycenter} and \eqref{LinearizedCMCBoundaryCondition} shows that $u'(0)$ is a solution to 
\begin{equation}\label{CMCFinalPDE}
\left\{
\begin{aligned}
    &(\Delta+2)u'(0)=5(\kappa_1\omega_1^2+\kappa_2\omega_2^2)\omega_3-H^S(a)\omega_3+\frac38 H^S(a),\\
    &\frac{\partial u'(0)}{\partial\eta}=-\kappa_1\omega_1^2-\kappa_2\omega_2^2,\\
    &\int_{\Sp^2_+}u'(0)\omega_id\mu_{\Sp^2}=0\hspace{.3cm}\textrm{for }i=1,2.
\end{aligned}
\right.
\end{equation}
It is easy to see that this problem admits only one solution; hence, $u'(0)$ is completely determined by \eqref{CMCFinalPDE}. Using a machine computation it can therefore be checked that
\begin{equation}\label{ExplicitSolutionCMCCase}
u'(0)=\frac{\kappa_1+\kappa_2}4\left(\frac34-\omega_3\right)
+\frac{\kappa_1-\kappa_2}4 \left(\frac{\omega_1^2-\omega_2^2}3\frac{2-3\omega_3+\omega_3^3}{(1-\omega_3^2)^2}\right)-\frac12(\kappa_1\omega_1^2+\kappa_2\omega_2^2)\omega_3.
\end{equation}
\paragraph{Derivation of $u'(0)$}\ \\
We outline how the formula in Equation \eqref{ExplicitSolutionCMCCase} was derived. First, we define the function 
$$f(\omega):=(\kappa_1\omega_1^2+\kappa_2\omega_2^2)\omega_3 \hspace{.5cm}\textrm{which satisfies}\hspace{.5cm}\frac12(\Delta+2) f=H^S(a)\omega_3- 5(\kappa_1\omega_1^2+\kappa_2\omega_2^2)\omega_3.$$
This shows that $v:= u'(0)+\frac12 f$ satisfies the problem 
$$\left\{\begin{aligned}
&(\Delta+2) v=\frac38H^S(a),\\
&\frac{\partial v}{\partial\eta}=-\frac12 (\kappa_1\omega_1^2+\kappa_2\omega_2^2),\\
&\int_{\Sp^2_+}\omega_i vd\mu_{\Sp^2}=0\hspace{.5cm}\textrm{for $i=1,2$}.   
\end{aligned}\right.$$
We separate this problem into two sub-problems by writing
\begin{equation}\label{CMCvAnsatz}
v=\frac{\kappa_1+\kappa_2}4 v_1+\frac{\kappa_1-\kappa_2}4v_2
\end{equation}
where $v_1$ and $v_2$ are solutions of the following problems:\\
\begin{minipage}[t]{.5\textwidth}
$$\left\{\begin{aligned}
&(\Delta+2) v_1=\frac32,\\
&\frac{\partial v_1}{\partial\eta}=-1,\\
&\int_{\Sp^2_+}\omega_i v_1d\mu_{\Sp^2}=0\hspace{.5cm}\textrm{for $i=1,2$}.   
\end{aligned}\right.$$
\end{minipage}
\begin{minipage}[t]{.5\textwidth}
$$\left\{\begin{aligned}
&(\Delta+2) v_2=0,\\
&\frac{\partial v_2}{\partial\eta}=-(\omega_1^2-\omega_2^2),\\
&\int_{\Sp^2_+}\omega_i v_2d\mu_{\Sp^2}=0\hspace{.5cm}\textrm{for $i=1,2$}.   
\end{aligned}\right.$$
\end{minipage}\ \\
These problems can be solved by making the following two ansatzes:
$$v_1(\omega)=g_1(\omega_3)
\hspace{.5cm}\textrm{and}\hspace{.5cm}v_2(\omega)=(\omega_1^2-\omega_2^2)g_2(\omega_3)$$
These ansatzes immediately satisfy the integral conditions. Additionally, working in spherical coordinates, they reduce the PDEs to two ODEs which can then be solved explicitly. For $v_1$, this is obvious. For $v_2$, we remark that 
$$\Delta v_2=(\omega_1^2-\omega_2^2)(g_2''+5\cot\theta g_2'-6g_2)
\hspace{.5cm}\textrm{and}\hspace{.5cm}
\frac{\partial v_2}{\partial\eta}=-(\omega_1^2-\omega_2^2) g_2'(\frac\pi2).$$
Note that the ODEs for $g_1$ and $g_2$ are of second order but that we only have one boundary condition, so the problems seem to be underdetermined. However, in both cases, one of the homogeneous solutions is singular and we get unique solutions by demanding smoothness of the solutions. We demonstrate this for $v_1$. The resulting ODE is
$$\frac1{\sin\theta}\frac d{d\theta}\left(\sin\theta g_1'(\theta)\right)+2g_1(\theta)=\frac32.$$
This Equation has the general solution 
$$g_1(\theta)=\frac34+c_1\cos\theta+c_2\left(\operatorname{artanh}(\cos\theta)-1\right).$$
$g_1$ is singular at $\theta=0$ unless $c_2=0$. We can then fix $c_1$ by demanding
$$-1=\frac{\partial v_1}{\partial\eta}=-\frac\partial{\partial\theta}\bigg|_{\theta=\frac\pi2}g_1(\theta)=c_1.$$
Recalling that in spherical coordinates $\cos\theta=\omega_3$, we deduce that
\begin{equation}\label{CMCv1solution}
v_1(\omega)=-\omega_3+\frac34.
\end{equation}
Repeating the same analysis for $v_2$ shows that
\begin{equation}\label{CMCv2solution}
v_2(\omega)=\frac{\omega_1^2-\omega_2^2}3\frac{2-3\omega_3+\omega_3^3}{(1-\omega_3^2)^2}.
\end{equation}
We obtain Equation \eqref{ExplicitSolutionCMCCase} by recalling the definitions of $v$ and $f$ and inserting Equations \eqref{CMCvAnsatz}, \eqref{CMCv1solution} and \eqref{CMCv2solution}.

\subsection{Explicit solution - Willmore Case}\label{explicitsolutionapp}
We use the notation from Subsection \ref{Outline}. By definition $u(\lambda)$ is the unique solution to the problem 
\begin{align}
   W[u(\lambda),\tilde g(\lambda)]&=\alpha(\lambda) H[u(\lambda),\tilde g(\lambda)]+\beta_i(\lambda) \nabla C^i[u(\lambda),\tilde g(\lambda)],\label{NonlinearWIllmore1}\\
    B[u(\lambda),\tilde g(\lambda)]&=0,\label{NonlinearWIllmore2}\\
    A[u(\lambda),\tilde g(\lambda)]&=2\pi,\label{NonlinearWIllmore3}\\
    C^i[u(\lambda),\tilde g(\lambda)]&=0\hspace{.3cm}\textrm{for }i=1,2.\label{NonlinearWIllmore4}
\end{align}
As we did in Subsection \ref{CMCExplicitSolution}, we derive a linear PDE for $u'(0)$ by differentiating these Equations at $\lambda=0$. First we differentiate Equations \eqref{NonlinearWIllmore3} and \eqref{NonlinearWIllmore4}. Using Equations \eqref{AreaVariationPHI}, \eqref{BarycenterVariationPHI}, \eqref{D2AEndResult} and \eqref{BarycenterVariationConcreteMET} we derive
\begin{align}
   & 2\int_{\Sp^2_+} u'(0) d\mu_{\Sp^2}=D_1 A[0,\delta]u'(0)=-D_2 A[0,\delta]\tilde g'(0)=\frac\pi4 H^S(a),\label{WillmorelinearizedArea}\\
    &\frac3{2\pi} \int_{\Sp^2_+}u'(0)\omega_i d\mu_{\Sp^2}=D_1 C^i[0,\delta]u'(0)=-D_2 C[0,\delta]\tilde g'(0)=0\label{WillmorelinearizedBarycenter}.
\end{align}
Next, we differentiate the boundary conditions. Following the same reasoning that lead to Equation \eqref{LinearizedCMCBoundaryCondition}, we  first differentiate the first component of Equation \eqref{NonlinearWIllmore2} to derive
\begin{equation}\label{WillmoreB1Linearized}
\frac{\partial u'(0)}{\partial\eta}=-(\kappa_1\omega_1^2+\kappa_2\omega_2^2).
\end{equation}
Next, we differentiate the second component of Equation \eqref{NonlinearWIllmore2}. Using Equations \eqref{B2VariationPHI} and \eqref{B2VariationMETEndResult}, we obtain
\begin{align}
\frac{\partial}{\partial\eta}(\Delta+2)u'(0)
=&
-D_1 B_2[0,\delta]u'(0)\nonumber\\
=&
D_2 B_2[0,\delta]\tilde g'(0)\nonumber\\
=&
7(\kappa_1\omega_1^2+\kappa_2\omega_2^2)-H^ S(a).\label{secondBCFO}
\end{align}
Finally, we differentiate Equation \eqref{NonlinearWIllmore1}. Using that $\alpha(0)=\beta_1(0)=\beta_2(0)=0$ as well as Equations \eqref{DWphi} and \eqref{DWtildegprime}, we obtain
\begin{align*}
\frac12\Delta(\Delta+2) u'(0)=&-D_1 W[0,\delta] u'(0)\\
=& D_2 W[0,\delta]\tilde g'(0)-\alpha'(0) H[0,\delta]-\beta_i'(0) \nabla C^i[0,\delta]\\
=& \frac12\Delta\left(5(\kappa_1\omega_1^2+\kappa_2\omega_2^2)\omega_3-\omega_3H^S(a)\right)-\alpha'(0) H[0,\delta]-\beta_i'(0) \nabla C^i[0,\delta].
\end{align*}
We multiply by 2, insert $H[0,\delta]=2$ and use the formula for $\nabla C^i[0,\delta]$ that we discussed just before Equation \eqref{BarycenterVariationPHI} to get
\begin{equation}\label{LienarizedPDEWIllmore01}
\Delta(\Delta+2) u'(0)=\Delta\left(5(\kappa_1\omega_1^2+\kappa_2\omega_2^2)\omega_3-\omega_3H^S(a)\right)-4\alpha'(0)+\frac3\pi\beta_i'(0)\omega_i.
\end{equation}
Next, we multiply Equation \eqref{LienarizedPDEWIllmore01}  by $\omega_1$ and integrate. Inserting Equations \eqref{WillmoreB1Linearized} and \eqref{secondBCFO} and using $\Delta\omega_1=-2\omega_1$ shows
\begin{align*} 
\frac3\pi\beta_1'(0)\int_{\Sp^2_+}\omega_1^2 d\mu_{\Sp^2}=&\int_{\Sp^2_+}\Delta(\Delta+2) u'(0) \omega_1 d\mu_{\Sp^2}\\
=& 2\int_{\Sp^2_+}\Delta u'(0) \omega_1 d\mu_{\Sp^2}+\int_{\Sp^2_+}\Delta^2 u'(0)\omega_1 d\mu_{\Sp^2}\\
=&-\int_{\partial\Sp^2_+}\frac{\partial \Delta u''(0)}{\partial\eta}\omega_1dS\\
=&0.
\end{align*}
Hence $\beta_1'(0)=0$ and similarly $\beta_2'(0)=0$. To work out $\alpha'(0)$ we integrate Equation \eqref{LienarizedPDEWIllmore01} and insert \eqref{secondBCFO} to find 
\begin{align*}
-8\pi\alpha'(0)=&\int_{\Sp^2_+}\Delta(\Delta+2) u'(0) d\mu_{\Sp^2}-\int_{\Sp^2_+}\Delta\left(5(\kappa_1\omega_1^2+\kappa_2\omega_2^2)\omega_3-\omega_3H^S(a)\right)d\mu_{\Sp^2}\\
=&-\int_{\partial \Sp^2_+}\frac\partial{\partial\eta}(\Delta+2) u'(0) dS+\int_{\partial \Sp^2_+}\frac\partial{\partial\eta}\left(5(\kappa_1\omega_1^2+\kappa_2\omega_2^2)\omega_3-\omega_3H^S(a)\right)dS\\
=&-\int_{\partial \Sp^2_+}\frac\partial{\partial\eta}(\Delta+2) u'(0) dS+\int_{\partial \Sp^2_+}5(\kappa_1\omega_1^2+\kappa_2\omega_2^2)-H^S(a)dS\\
=&-2\int_{\partial \Sp^2_+}\kappa_1\omega_1^2+\kappa_2\omega_2^2 dS\\
=&-2H^S(a)\int_{\partial\Sp^2_+}\omega_1^2 dS\\
=&-2\pi H^S(a).
\end{align*}
Inserting this and $\beta_1'(0)=\beta_2'(0)=0$ into Equation \eqref{LienarizedPDEWIllmore01}, we get
\begin{equation}\label{WillmoreLinearizedPDE}
\Delta(\Delta+2) u'(0)=\Delta\left(5(\kappa_1\omega_1^2+\kappa_2\omega_2^2)\omega_3-\omega_3H^S(a)\right)-H^S(a).
\end{equation}
Collecting Equations \eqref{WillmorelinearizedArea}, \eqref{WillmorelinearizedBarycenter}, \eqref{WillmoreB1Linearized}, \eqref{secondBCFO} and \eqref{WillmoreLinearizedPDE} shows that $u'(0)$ is a solution to the following problem:
\begin{equation}\label{WillmoreLienarizedPDEFinal}\left\{\begin{aligned}
&\Delta(\Delta+2) u'(0)=\Delta\left(5(\kappa_1\omega_1^2+\kappa_2\omega_2^2)\omega_3-\omega_3H^S(a)\right)-H^S(a),\\
&\frac{\partial u'(0)}{\partial\eta}=-(\kappa_1\omega_1^2+\kappa_2\omega_2^2),\\
&\frac{\partial}{\partial\eta}(\Delta+2)u'(0)=7 (\kappa_1\omega_1^2+\kappa_2\omega_2^2)-H^S(a),\\
&\int_{\Sp^2_+} u'(0) d\mu_{\Sp^2}=\frac\pi8 H^S(a),\\
&\int_{\Sp^2_+} u'(0)\omega_id\mu_{\Sp^2}=0\hspace{.3cm}\textrm{for $i=1,2$}.
\end{aligned}\right.
\end{equation}
It is easy to see that this problem admits only one solution; hence, $u'(0)$ is completely determined by \eqref{WillmoreLienarizedPDEFinal}. Using a machine computation it can therefore be checked that
\begin{equation}\label{WillmoreLinearizedSolution}
u'(0)=\frac{\kappa_1-\kappa_2}4\frac{\omega_1^2-\omega_2^2}{1+\omega_3}+(\kappa_1+\kappa_2)\left[1-\ln(2)+\frac{\ln(1+\omega_3)}2-\frac34\omega_3\right]-\frac{\kappa_1\omega_1^2+\kappa_2\omega_2^2}2\omega_3.
\end{equation}
 \paragraph{Derivation of $u'(0)$}\ \\
We find $u'(0)$ using the same method that we employed in the CMC case. First we introduce $f(\omega):=(\kappa_1\omega_1^2+\kappa_2\omega_2^2)\omega_3$ and define $v:=u'(0)+\frac12 f$. $v$ satisfies the problem
$$\left\{ \begin{aligned}
    \Delta(\Delta+2)v&=-H^S(a),\\
    \frac{\partial v}{\partial\eta}&=-\frac12(\kappa_1\omega_1^2+\kappa_2\omega_2^2),\\
    \frac{\partial(\Delta+2)v}{\partial\eta}&=2(\kappa_1\omega_1^2+\kappa_2\omega_2^2),\\
    \int_{\Sp^2_+} v d\mu_{\Sp^2}&=\frac\pi4 H^S(a),\ \int_{\Sp^2_+} v\omega_id\mu_{\Sp^2} =0.
\end{aligned}\right.$$
To solve this PDE, we make the ansatz
\begin{equation}\label{WillmorevAnsatz}
v=(\kappa_1+\kappa_2)v_1+\frac{\kappa_1-\kappa_2}4 v_2
\end{equation}
where $v_1$ and $v_2$ are solutions to the following problems:
$$
\left\{ \begin{aligned}
    \Delta(\Delta+2)v_1&=-1,\\
    \frac{\partial v_1}{\partial\eta}&=-\frac14,\\
    \frac{\partial(\Delta+2)v_1}{\partial\eta}&=1, \\
    \int_{\Sp^2_+}v_1d\mu_{\Sp^2}&=\frac\pi4,\ \int_{\Sp^2_+}v_1\omega_id\mu_{\Sp^2}=0.
\end{aligned}\right.
\hspace{.5cm}
\textrm{and}
\hspace{.5cm}
\left\{ \begin{aligned}
    \Delta(\Delta+2)v_2&=0,\\
    \frac{\partial v_2}{\partial\eta}&=-(\omega_1^2-\omega_2^2),\\
    \frac{\partial(\Delta+2)v_2}{\partial\eta}&=4( \omega_1^2-\omega_2^2),\\
    \int_{\Sp^2_+}v_2d\mu_{\Sp^2}&=0,\     \int_{\Sp^2_+}v_2\omega_id\mu_{\Sp^2}=0.
\end{aligned}\right.
$$
Using the same strategy that we used to solve the analog equations in the CMC case, we get the following solutions for $v_1$ and $v_2$:
\begin{equation}\label{v1v2SolutionsWIllmoreCase}
v_1(\omega)=1-\ln(2)+\frac12\ln(1+\omega_3)-\frac34\omega_3
\hspace{.5cm}\textrm{and}\hspace{.5cm}
v_2(\omega)=\frac{\omega_1^2-\omega_2^2}{1+\omega_3}
\end{equation}
Recalling the definitions of $v$ and $f$, the ansatz \eqref{WillmorevAnsatz} and the formulas from Equation \eqref{v1v2SolutionsWIllmoreCase}, we get Equation \eqref{WillmoreLinearizedSolution}.

\subsection{Second derivatives - CMC Case}\label{secondDerAppCMC}
We use the notation from Subsection \ref{CMCOutline}. The first step is to differentiate the volume constraint $V[u(\lambda),\tilde g(\lambda)]=2\pi/3$ twice with respect to $\lambda$ and evaluate at $\lambda=0$. This gives
\begin{align}
0=&D_1 V[0,\delta] u''(0)+D_2 V[0,\delta]\tilde g''(0)\nonumber\\
&+D_1^2 V[0,\delta] (u'(0), u'(0))+D_2^2V[0,\delta](\tilde g'(0),\tilde g'(0))+2D_{12}^2V[0,\delta](u'(0),\tilde g'(0)).\label{VolumeConstraintSecondDERIVATIVE}
\end{align}
Recalling that we denote the interior domain that is enclosed by the graph of $(1+\epsilon u'(0))\omega$ by $\Omega_{\epsilon u'(0)}$ and using that $\tr_{\R^3}\tilde g'(0)=0$ (see Equation \eqref{metricderivativeformula}) we compute 
\begin{align} 
D_{12}^2 V[0,\delta](u'(0),\tilde g'(0))=&\frac{d}{d\epsilon}\bigg|_{\epsilon=0}\frac{d}{d\alpha}\bigg|_{\alpha=0}V[\epsilon u'(0), \delta+\alpha\tilde g'(0)]\nonumber\\
=&\frac {d}{d\epsilon}\bigg|_{\epsilon=0}\left[\int_{\Omega_{\epsilon u'(0)}}\frac{d}{d\alpha}\bigg|_{\alpha=0}\sqrt{\det(\delta+\alpha\tilde g'(0))}d^3x\right]\nonumber\\
=&\frac {d}{d\epsilon}\bigg|_{\epsilon=0}\left[\int_{\Omega_{\epsilon u'(0)}}\frac12\tr_{\R^3}\tilde g'(0)\right]d^3x\nonumber\\
=&0.\label{MixedVolumeDerivativeVANISH}
\end{align}
Next, we need an explicit formula for $\tilde g''(0)$. This formula is derived in Appendix A in \cite{Metsch}. Using our concrete choice of basis in which $h_{ij}=\delta_{ij}\kappa_i$ at the point $a$, the formula simplifies to
\begin{equation}\label{tildegdoubleprime}
\tilde g''(0)=\begin{bmatrix}
2\kappa_1^2x_1^2 & 2\kappa_1\kappa_2x_1x_2 & \partial_1 h_{ab}x_ax_b\\
2\kappa_1\kappa_2x_1x_2 & 2\kappa_2^2x_2^2 & \partial_2 h_{ab}x_ax_b\\
\partial_1 h_{ab}x_ax_b& \partial_2 h_{ab}x_ax_b &0
\end{bmatrix}.
\end{equation}
Here $\partial_i h_{ab}$ denotes the derivative of the second fundamental form with respect to the base point. Observing Equation \eqref{tildegdoubleprime}, we get $\tr_{\R^3}\tilde g''(0)=2(\kappa_1^2x_1^2+\kappa_2^2x_2^2)$ and hence 
\begin{equation}\label{VariationMETg2prime}
D_2 V[0,\delta]\tilde g''(0)=\frac12\int_{B_1^+(0)}2(\kappa_1^2x_1^2+\kappa_2^2x_2^2)d^3x=\int_{B_1^+(0)}\kappa_1^2x_1^2+\kappa_2^2x_2^2 d^3x.
\end{equation}
For the moment let us abbreviate $M_\epsilon:=\delta+\epsilon\tilde g'(0)$, using $\tr_{\R^3}(M_0^{-1}\tilde g'(0))=\tr_{\R^3}\tilde g'(0)=0$ we compute 
\begin{align}
D_2^2 V[0,\delta](\tilde g'(0),\tilde g'(0))=&\frac{d^2}{d\epsilon^2}\bigg|_{\epsilon=0}V[0, M_\epsilon]\nonumber\\
=&\frac{d^2}{d\epsilon^2}\bigg|_{\epsilon=0}\int_{B_1^+(0)}\sqrt{\det(M_\epsilon)}d^3x\nonumber\\
=&\frac d{d\epsilon}\bigg|_{\epsilon=0}\int_{B_1^+(0)}\frac1{2}\sqrt{\det(M_\epsilon)}\tr_{\R^3}(M_\epsilon^{-1}\tilde g'(0))d^3x\nonumber\\
=&\frac12\int_{B_1^+(0)}\tr_{\R^3}\left[\frac{\partial M_\epsilon^{-1}}{\partial\epsilon}\bigg|_{\epsilon=0}\tilde g'(0)\right]d^3x\nonumber\\
=&-\frac12 \int_{B_1^+(0)}\tr_{\R^3}(\tilde g'(0)^2)d^3x\label{SecondVlumeVariationMET}.
\end{align}
Observing Equation \eqref{metricderivativeformula} and denoting irrelevant matrix indices by $*$, we get
$$\tr_{\R^3}(\tilde g'(0)^2)=\tr_{\R^3}\begin{bmatrix}
\kappa_1^2x_1^2 & * & *\\
* & \kappa_2^2 x_2^2 & *\\
* & * & \kappa_1^2 x_1^2+\kappa_2^2 x_2^2
\end{bmatrix}
=2(\kappa_1^2 x_1^2+\kappa_2^2 x_2^2).$$
Inserting into Equation \eqref{SecondVlumeVariationMET} and comparing with Equation \eqref{VariationMETg2prime} shows
\begin{equation}\label{VOlumeSecondMETDerivativeCancelation}
D_2V[0,\delta]\tilde g''(0)+D_2^2V[0,\delta](\tilde g'(0),\tilde g'(0))=0.
\end{equation}
Inserting Equations \eqref{VolumeVariationPHI}, \eqref{MixedVolumeDerivativeVANISH} and \eqref{VOlumeSecondMETDerivativeCancelation} into Equation \eqref{VolumeConstraintSecondDERIVATIVE}, we get
$$\int_{\Sp^2_+}u''(0) d\mu_{\Sp^2}=D_1 V[0,\delta] u''(0)=-D_1^2 V[0,\delta](u'(0), u'(0))=-2\int_{\Sp^2_+}u'(0)^2 d\mu_{\Sp^2}.$$
Considering Equation \eqref{AreaVariationPHI} and using the explicit formula for $u'(0)$ from Equation \eqref{ExplicitSolutionCMCCase}, we use a machine computation to get 
\begin{align*} 
D_1 A[0,\delta]u''(0)=&2\int_{\Sp^2_+}u''(0)d\mu_{\Sp^2}\\
=&-4\int_{\Sp^2_+}u'(0)^2 d\mu_{\Sp^2}\\
=&\left(\frac{113}{30240}-\frac{\ln(2)}{9}\right)\pi H^S(a)^2+\left(-\frac{229}{945}+\frac49\ln(2)\right)\pi K^S(a).
\end{align*}
This is Equation \eqref{CMC5}. To establish Equation \eqref{CMC4}, we use Equation \eqref{AreaVariationPHI} and insert the formula for $\tilde g''(0)$ from Equation \eqref{tildegdoubleprime}. This gives
\begin{equation}\label{D2AtildegDoublePrime}
D_2 A[0,\delta]\tilde g''(0)=\frac12\int_{\Sp^2_+}\tr_{\Sp^2}\tilde g''(0) d\mu_{\Sp^2}
=\frac12\int_{\Sp^2_+}\tr_{\R^3}\tilde g''(0)-\tilde g''(0)(\omega,\omega) d\mu_{\Sp^2}.
\end{equation}
Considering Equation \eqref{tildegdoubleprime}, we have
\begin{align*}
    \tr_{\R^3} \tilde g''(0)&=2(\kappa_1^2x_1^2+\kappa_2^2x_2^2),\\
    \tilde g''(0)(x,x)&=2\kappa_1^2x_1^4+2\kappa_2^2x_2^4+4\kappa_1\kappa_2x_1^2x_2^2+\textrm{odd}.
\end{align*}
Therefore:
\begin{align}
    \frac12\int_{\Sp^2_+}\tr_{\R^3}\tilde g''(0)d\mu_{\Sp^2}&=(\kappa_1^2+\kappa_2^2)\int_{\Sp^2_+}\omega_1^2d\mu_{\Sp^2}=\frac{2\pi}3H^S(a)^2-\frac{4\pi}3K^S(a)\label{gDoublePrimeINtegral1}\\
    \ \nonumber\\
    \frac12\int_{\Sp^2_+}\tilde g''(0)(\omega,\omega)d\mu_{\Sp^2}&=
    (\kappa_1^2+\kappa_2^2)\int_{\Sp^2_+}\omega_1^4d\mu_{\Sp^2}+2 \kappa_1\kappa_2\int_{\Sp^2_+}\omega_1^2\omega_2^2d\mu_{\Sp^2}\\
     &=\frac{2\pi}5 H^S(a)^2-\frac{8\pi}{15}K^S(a)\label{gDoublePrimeINtegral2}
\end{align}
Inserting these formulas into Equation \eqref{D2AtildegDoublePrime} implies Equation \eqref{CMC4}.

\subsection{Second Derivatives - Willmore Case}\label{secondderivativeWillmore}
We use the notation from Subsection \ref{Outline}. First, we differentiate the first boundary condition $B_1[u(\lambda),\tilde g(\lambda)]=0$ twice with respect to $\lambda$ and evaluate at $\lambda=0$. Using Equation \eqref{B1Derphi}, we get 
\begin{align*}
0=& D_1 B_1[0,\delta] u''(0)+D_2 B_1[0,\delta]\tilde g''(0)+\frac{d^2}{d\epsilon^2}\bigg|_{\epsilon=0} B_1[\epsilon u'(0), \delta+\epsilon\tilde g'(0)]\\
=&-\frac{\partial u''(0)}{\partial\eta}+D_2 B_1[0,\delta]\tilde g''(0)+\frac{d^2}{d\epsilon^2}\bigg|_{\epsilon=0} B_1[\epsilon u'(0), \delta+\epsilon\tilde g'(0)].
\end{align*}
Combining with Equation \eqref{WillEnDeru}, we deduce 
\begin{align}
D_1 \mathcal W[0,\delta] u''(0)=&\int_{\partial \Sp^2_+} \frac{\partial u''(0)}{\partial\eta} dS\nonumber\\
=&\int_{\partial\Sp^2_+}D_2 B_1[0,\delta]\tilde g''(0)+\frac{d^2}{d\epsilon^2}\bigg|_{\epsilon=0} B_1[\epsilon u'(0), \delta+\epsilon\tilde g'(0)]dS.\label{WillmoreVariationu2PrimeV0}
\end{align}
Using Equation \eqref{B1Derq} and the formula for $\tilde g''(0)$ from Equation \eqref{tildegdoubleprime} we get $D_2 B_1[0,\delta]\tilde g''(0)=-\tilde g''(0)_{a3}\omega_a=-\partial_ a h_{ij}\omega_a\omega_i\omega_j$. As this is an odd term we get 
\begin{equation}\label{oddtermdissappeards}
\int_{\partial\Sp^2_+}D_2 B_1[0,\delta]\tilde g''(0)dS=0.
\end{equation}
Inserting into Equation \eqref{WillmoreVariationu2PrimeV0} and using the explicit formulas for $\tilde g'(0)$ from Equation \eqref{metricderivativeformula} as well as for $u'(0)$ from Equation \eqref{WillmoreLinearizedSolution}, we can use Mathematica to compute 
$$D_1 \mathcal W[0,\delta] u''(0)=\int_{\partial\Sp^2_+}\frac{d^2}{d\epsilon^2}\bigg|_{\epsilon=0} B_1[\epsilon u'(0), \delta+\epsilon\tilde g'(0)]dS=4\pi H^S(a)^2(\ln(2)-1).$$
This is Equation \eqref{4}. To establish Equation \eqref{5} let us abbreviate $q=\tilde g''(0)$. Then first $q(t\omega)=t^2 q(\omega)$ and so we may deduce $\nabla_\omega q(\omega)=2 q$. Also we use $\tr_{\Sp^2} q=\tr_{\R^3} q-q(\omega,\omega)$. Inserting into Equation \eqref{WillEnDerq} gives 
\begin{align} 
D_2 \mathcal W[0,\delta]q&=\int_{\Sp^2_+}\frac12\tr_{\Sp^2}q+q(\omega,\omega)-\tr_{\Sp^2}\nabla_\cdot q(\cdot,\omega) d\mu_{\Sp^2}\nonumber\\
&=\int_{\Sp^2_+}\frac12\tr_{\R^3}q+\frac12q(\omega,\omega)-\tr_{\R^3}\nabla_\cdot q(\cdot,\omega)+\nabla_\omega q(\omega,\omega) d\mu_{\Sp^2}\nonumber\\
&=\int_{\Sp^2_+}\frac12\tr_{\R^3}q+\frac52q(\omega,\omega)-\tr_{\R^3}\nabla_\cdot q(\cdot,\omega)d\mu_{\Sp^2}.\label{D2Wtildegdoubleprime}
\end{align}
Considering Equation \eqref{tildegdoubleprime}, we have 
\begin{align*}
    \tr_{\R^3}\nabla_\cdot q(\cdot,x)&=\sum_{i=1}^2\partial_i q(e_i, x)=\sum_{i=1}^2\sum_{\mu=1}^3\partial_i q_{i\mu}x_\mu
    =\sum_{i=1}^2\sum_{j=1}^2\partial_i q_{ij}x_j+\textrm{odd}\\
    &=4(\kappa_1^2x_1^2+\kappa_2^2x_2^2)+2\kappa_1\kappa_2(x_1^2+x_2^2)+\textrm{odd}.
\end{align*}
Using this formula, it is straightforward to check that
\begin{align}
-\int_{\Sp^2_+}\tr_{\R^3}\nabla_\cdot q(\cdot,\omega)d\mu_{\Sp^2}&=-4(\kappa_1^2+\kappa_2^2)\int_{\Sp^2_+}\omega_1^2 d\mu_{\Sp^2}-4\kappa_1\kappa_2\int_{\Sp^2_+}\omega_1^2 d\mu_{\Sp^2}\nonumber\\ 
 &=-\frac{8\pi}3 H^S(a)^2+\frac{8\pi}3 K^S(a).\label{gDoublePrimeINtegral3}
 \end{align}
Inserting Equations \eqref{gDoublePrimeINtegral1}, \eqref{gDoublePrimeINtegral2} and \eqref{gDoublePrimeINtegral3} into Equation \eqref{D2Wtildegdoubleprime} implies Equation \eqref{5}.

\setcounter{equation}{0}
\section{Mathematica Code}\label{mathematicacodeAppendix}
The supplementary file contains the following commented Mathematica notebooks. 

\begin{itemize}
    \item \textbf{Domain Construction}
        \begin{enumerate}[(1)]
            \item `Section 4.1 - Proof of Equation (4.17)'
        \end{enumerate}
    \item \textbf{CMC Computations}
        \begin{enumerate}[(1)]
            \item `Proof of Equation (5.14) - The linearized CMC Equation'
            \item `Proof of Equation (5.15)'
            \item `Proof of Equations (5.16) and (5.19)'
            \item `Proof of Equation (5.17)'
        \end{enumerate}
    \item \textbf{Willmore Computations}
        \begin{enumerate}[(1)]
            \item `Check of Equation (5.3) - The linearized Willmore Equation'
            \item `Proof of Equation (5.4)'
            \item `Proof of Equation (5.5)'
            \item `Proof of Equation (5.6)'
            \item `Proof of Equation (5.7)'
        \end{enumerate}  
\end{itemize}

\paragraph{A Note regarding the Implementation}\ \\
In the Willmore computations, we are sometimes faced with the task of implementing the second fundamental form of a surface $\phi:\Omega\subset\R^2\rightarrow(\R^3,\tilde g)$. To do this, we quickly derive a suitable expression.\\

Let $\phi:\Omega\subset\R^2\rightarrow(\R^3,\tilde g)$ parameterize a surface and $\nu(x)$ denote the interior unit normal. For $\epsilon>0$ small, 
$$\Phi:\Omega\times(-\epsilon,\epsilon)\rightarrow\R^3,\ \Phi(x,z):=\phi(x)+z\nu(x)$$
is a diffeomorphism onto its image. Denoting $z$ as $x_3$, we have by definition 
\begin{align*}
h_{ij}(x)=&\tilde g(\nabla_{\partial_i\phi}\partial_j\phi,\nu)\\
=&\tilde g(\nabla_{\partial_i\Phi}\partial_j\Phi,\nu)\bigg|_{z=0}\\
=&\tilde g(\nabla_{\partial_i\Phi}\partial_j\Phi,\partial_3\Phi)\bigg|_{z=0}\\
=&\Gamma^3_{\ ij}(x_1,x_2,0).
\end{align*}
In the coordinate system $\Phi$
\begin{align*}
&\tilde g=\begin{bmatrix}
\tilde g(\partial_i\phi(x)+z\partial_i\nu(x),\partial_j\phi(x)+z\partial_j\nu(x)) & z\tilde g(\partial_i\nu(x),\nu(x))\\
z\tilde g(\partial_i\nu(x),\nu(x)) & 1
\end{bmatrix},
\\
&\tilde g\bigg|_{z=0}=\begin{bmatrix}
\tilde g(\partial_i\phi(x),\partial_j\phi(x)) &0\\
0 & 1
\end{bmatrix}.
\end{align*}
This gives 
\begin{align}
h_{ij}(x)=\Gamma^{3}_{\ ij}(x,0)=&-\frac12 \frac{\partial}{\partial z}\bigg|_{z=0}\tilde g_{\phi(x)+z\nu(x)}(\partial_i\phi(x)+z\partial_i\nu(x), \partial_j\phi(x)+z\partial_j\nu(x))\nonumber\\
=&-\frac12\left(\nu^\mu\partial_\mu \tilde g_{\alpha\beta}\partial_i\phi(x)^\alpha\partial_j\phi(x)^\beta+\tilde g_{\alpha\beta}\partial_i\phi^\alpha\partial_j\tilde\nu+\tilde g_{\alpha\beta}\partial_j\phi^\alpha\partial_i\tilde\nu\right).\label{Paper2_SecondFFExplicitFormula}
\end{align}
In the code, we implement the matrix
\begin{equation}\label{HilfsMatrixS}
\begin{aligned}
&s_{ij}:=\tilde g_{\phi(x)+z\nu(x)}(\partial_i\phi(x)+z\partial_i\nu(x), \partial_j\phi(x)+z\partial_j\nu(x))\\
&\textrm{so that}\hspace{.5cm}
h_{ij}=-\frac12\frac\partial{\partial z}\bigg|_{z=0}s_{ij}.
\end{aligned}
\end{equation}

\section*{Acknowledgements}
The author would like to thank Ernst Kuwert for the suggestion of this interesting topic and the many helpful discussions.

\bibliographystyle{plainurl}
\bibliography{quellen}
\end{document}